\newtheorem{theorem}{Theorem}[section]
\newtheorem{lemma}[theorem]{Lemma}
\newtheorem{proposition}[theorem]{Proposition}
\newtheorem{corollary}[theorem]{Corollary}
\theoremstyle{definition}
\newtheorem{definition}[theorem]{Definition}
\newtheorem{example}[theorem]{Example}
\theoremstyle{remark}
\newtheorem*{remark}{Remark}
\newcommand{\vertiii}[1]{{\left\vert\kern-0.25ex\left\vert\kern-0.25ex\left\vert #1 
\right\vert\kern-0.25ex\right\vert\kern-0.25ex\right\vert}}
\newcommand{\R}{{\mathbb R}}
\numberwithin{equation}{section}
\def\1{\textbf{\rm 1}}
\def\XXint#1#2#3{{\setbox0=\hbox{$#1{#2#3}{\int}$}
\vcenter{\hbox{$#2#3$}}\kern-.5\wd0}}
\begin{document}

\keywords{Gaussian correlation inequality, inverse Brascamp--Lieb inequality}

\subjclass[2020]{{39B62, 52A40 (primary); 60E15, 60G15 (secondary)}}

\author[Nakamura]{Shohei Nakamura}
\address[Shohei Nakamura]{School of Mathematics, The Watson Building, University of Birmingham, Edgbaston, Birmingham, B15 2TT, England.}
\email{s.nakamura@bham.ac.uk}
\author[Tsuji]{Hiroshi Tsuji}
\address[Hiroshi Tsuji]{Department of Mathematics, Graduate School of Science and Engineering, Saitama University, Saitama 338-8570, Japan and Department of Mathematics, Institute of Science Tokyo, 2-12-1 Ookayama, Meguro-ku, Tokyo 152-8551, Japan}
\email{tsujihiroshi@mail.saitama-u.ac.jp, tsujihiroshi@math.sci.isct.ac.jp}

\title[The Gaussian correlation inequality]{The Gaussian correlation inequality for centered convex sets and the case of equality}

\begin{abstract}
Inspired by Milman's recent observation, we prove that the Gaussian correlation inequality holds for convex sets having the same barycenter, and especially for centered ones.
This gives an affirmative answer to the problem proposed by Szarek and Werner.
We also characterize the equality case. The study of the equality case in the non-symmetric Gaussian correlation inequality {relates to the following question: }  
Let $X$ be a standard Gaussian random vector in $\R^n$. For which convex sets $K_1,K_2 \subset \R^n$, are the two events $\{X\in K_1\}$ and $\{X\in K_2\}$ independent? By imposing an additional normalization that $K_1$ and $K_2$ have the same barycenter, we give the necessary and sufficient conditions for this independence. 
{The conditions also identify when $\|X\|_{K_1}$ and $\|X\|_{K_2}$ are independent as random variables. } 
\end{abstract}

\maketitle

\section{Introduction}
Let $(\Omega,\mathcal{F},\mathbb{P})$ be a probability space. 
The notion of \textit{independence} is one of the fundamental concepts in probability theory. Two events $\Omega_1,\Omega_2 \subset \Omega$ are said to be independent if $\mathbb{P}(\Omega_1\cap \Omega_2) = \mathbb{P}(\Omega_1)\mathbb{P}(\Omega_2)$. 
We are interested in the particular case where $\Omega_i = \{X\in K_i\}$, $i=1,2$, with $K_1,K_2 \subset \mathbb{R}^n$ being Borel sets and $X:\Omega \to \mathbb{R}^n$ being a standard Gaussian random vector. 
A typical case of this independence occurs when considering orthogonal stripes. 
As the simplest example, we consider the case $n=2$ with $S_1:=[ -a,a ]\times \R$ and $S_2:= \mathbb{R}\times [-b,b]$, where $a,b>0$. Then it is easy to see that the events  $\Omega_i = \{X\in S_i\}$, $i=1,2$, are independent: 
$$
\mathbb{P}( X \in S_1\cap S_2 ) = \mathbb{P}(X\in S_1) \mathbb{P}(X\in S_2). 
$$
However, for a general choice of $K_1,K_2 \subset \mathbb{R}^n$, the events $ \{X\in K_i\}$ are not necessarily independent, and either $\mathbb{P}(\Omega_1\cap \Omega_2) > \mathbb{P}(\Omega_1)\mathbb{P}(\Omega_2)$ or $\mathbb{P}(\Omega_1\cap \Omega_2) < \mathbb{P}(\Omega_1)\mathbb{P}(\Omega_2)$ could occur. 
What is the expected relationship between these quantities? 
The trivial answer to this line is 
$$
\mathbb{P}(\Omega_1\cap \Omega_2) \le \mathbb{P}(\Omega_1)^\frac12\mathbb{P}(\Omega_2)^\frac12
$$
since $\Omega_1\cap \Omega_2 \subset \Omega_i$ for $i=1,2$. 
The exponent $\frac12$ on the right-hand side is observed to be optimal by taking $\Omega_1=\Omega_2$, since $\mathbb{P}$ is a probability measure. 
In contrast to this upper bound on $\mathbb{P}(\Omega_1\cap \Omega_2)$, there is no hope for a nontrivial lower bound that holds for all $\Omega_1,\Omega_2$ since $\mathbb{P}(\Omega_1\cap \Omega_2)=0$ whenever $\Omega_1 \cap \Omega_2 = \emptyset$.  Nevertheless, a nontrivial lower bound may be available by imposing some structural assumption on $K_1,K_2$. 
Royen's symmetric Gaussian correlation inequality \cite{Roy14} states that for any symmetric convex sets $K_1, K_2 \subset \R^n$ and $\Omega_i = \{X\in K_i\}$, it holds that $\mathbb{P}(\Omega_1\cap \Omega_2) \ge \mathbb{P}(\Omega_1)\mathbb{P}(\Omega_2)$ or equivalently, 
\begin{equation}\label{e:OriginGCI}
\gamma(K_1 \cap K_2) \ge \gamma(K_1) \gamma(K_2), 
\end{equation}
where $d\gamma = (2\pi)^{-\frac n2} e^{-\frac12 |x|^2}\, dx$ is the standard Gaussian measure in $\R^n$. 
Let us briefly review the history of \eqref{e:OriginGCI}. 
For this purpose, it is worth recalling an equivalent formulation of the Gaussian correlation inequality. 
Let $d_1,d_2\in \mathbb{N}$ and let $(Y_1,Y_2)$ be a random vector in $\R^{d_1}\times \R^{d_2}$ that is normally distributed with mean zero and arbitrary covariance. 
Then an equivalent form of the symmetric Gaussian correlation inequality states that for any symmetric convex sets $L_i \subset \R^{d_i}$, $i=1,2$, 
\begin{equation}\label{e:ProbGCI}
\mathbb{P} ( Y_1\in L_1,\; Y_2\in L_2 ) \ge \mathbb{P}(Y_1\in L_1) \mathbb{P} (Y_2 \in L_2). 
\end{equation}
In particular, if we denote $(Y_1,Y_2) = (Z_1,\ldots, Z_{d_1+d_2})$ and take $L_i= [-1,1]^{d_i}$, $i=1,2$, then \eqref{e:ProbGCI} can be written as 
\begin{equation}\label{e:ActualGCI} 
    \mathbb{P} ( \max_{i=1,\ldots, d_1+d_2} |Z_i| \le 1  ) \ge \mathbb{P}( \max_{i=1,\ldots, d_1} |Z_i| \le 1 ) \mathbb{P} ( \max_{i=d_1+1,\ldots, d_1+d_2} |Z_i |\le 1 ). 
\end{equation}
While there is a clear implication \eqref{e:OriginGCI} $\Rightarrow$ \eqref{e:ProbGCI} $\Rightarrow$ \eqref{e:ActualGCI}, all three formulations are in fact equivalent via approximating arguments; see \cite{LM}. 
\if0 \footnote{
More precisely, they studied the equivalent probabilistic inequality as follows. 
Let $d\in \mathbb{N}$ and $(X,Y)$ be the random vector on $\R^{d}\times \R^d$ normally distributed with mean zero and the covariance matrix $\Sigma = \begin{pmatrix} \Sigma_{11} & \Sigma_{12} \\ \Sigma_{12}^* &\Sigma_{22} \end{pmatrix}$ which is nonnegative definite on $\R^{d}\times \R^d$. 
Then for symmetric and convex sets $L_1,L_2 \subset \R^d$,
\begin{equation}\label{e:ProbGCI}
\mathbb{P} ( X\in L_1,\; Y\in L_2 ) \ge \mathbb{P}(X\in L_1) \mathbb{P} (Y \in L_2). 
\end{equation}
To see the equivalence between \eqref{e:OriginGCI} and \eqref{e:ProbGCI}, we note that \eqref{e:OriginGCI} may be easily generalized to the following form: for any nonnegative definite symmetric matrix $\Sigma$ on $\R^n$, $\gamma_{\Sigma}(K_1\cap K_2) \ge \gamma_{\Sigma}(K_1) \gamma_{\Sigma}(K_2)$. 
{\color{red}$\gamma_{\Sigma}$の定義がまだ．}
We then choose $n=2d$ and $K_1 = L_1 \times \R^n$ and $K_2 = \R^n\times K_1$ to derive \eqref{e:ProbGCI}: 
$$
\mathbb{P} ( X\in L_1,\; Y\in L_2 )
=
\gamma_{\Sigma}(K_1\cap K_2)
\ge 
\gamma_{\Sigma}(L_1 \times \R^n)
\gamma_{\Sigma}(\R^n \times L_2)
= 
\mathbb{P}(X\in L_1) \mathbb{P} (Y \in L_2). 
$$
On the other hand, by choosing $d =n$, the degenerate $\Sigma = \begin{pmatrix}
	{\rm id}_n & {\rm id}_n \\ {\rm id}_n & {\rm id}_n
\end{pmatrix}$, and $L_i = K_i$, \eqref{e:ProbGCI} yields that 
$$
\gamma(K_1\cap K_2)
=
\int_{\R^{n}\times \R^n} \mathbf{1}_{L_1}(x_1) \mathbf{1}_{L_2}(x_2) \delta(x_1-x_2) d\gamma(x_1)dx_2
=
\mathbb{P} ( X\in L_1,\; Y\in L_2 )
\ge
\mathbb{P}(X\in L_1) \mathbb{P} (Y \in L_2)
= 
\gamma(K_1)\gamma(K_2). 
$$
} \fi 
The formulation of \eqref{e:ProbGCI} is due to Das Gupta et al. \cite{DEOPSS}, although the inequality itself may be traced back to the works of Khatri \cite{Kha} and \v{S}id\'{a}k \cite{Sid}, who independently proved \eqref{e:ActualGCI} when $\min\{d_1,d_2\}=1$. 
The formulation of \eqref{e:OriginGCI} may be found in the work of Pitt \cite{Pitt} and they proved \eqref{e:OriginGCI} for any symmetric convex sets $K_1,K_2$ when $n=2$. 
Regarding the inequality in higher dimensions, Schechtman, Schlumprecht, and Zinn \cite{SSZ}, Harg\'{e} \cite{Harge}, and Cordero-Erausquin \cite{C-E} investigated \eqref{e:OriginGCI} when either $K_1$ or $K_2$ is symmetric ellipsoid. 
After several partial results \cite{Borell, Harge2, Hu, SW}, a complete proof of the symmetric Gaussian correlation inequality in any dimension was provided by the celebrated work of Royen \cite{Roy14}; see also \cite{LM}.  
Recently, a new proof of \eqref{e:ActualGCI} (and thus of \eqref{e:OriginGCI}) was given by Milman \cite{Mil}, and our inspiration for this note comes from his observation. 
We refer to \cite{ACS,ENT,Tehr} for recent developments and generalizations of the Gaussian correlation inequality. 

In this note, we address the following two questions. 
First; (i) does the Gaussian correlation inequality \eqref{e:OriginGCI} hold for non-symmetric convex sets such as simplices? 
Second; (ii) is there any characterization of the equality case in this non-symmetric Gaussian correlation inequality? 
{In their current form, these questions are too naive to be of genuine mathematical significance.}
For instance, regarding the first question (i), given two arbitrary convex bodies $K_1, K_2$, we can make them disjoint by translating one of them sufficiently far away. Therefore, there is no hope of establishing \eqref{e:OriginGCI} for all convex sets, and some normalization is necessary. 
In order to make a meaningful normalization, we make use of the barycenter with respect to $\gamma$, which is defined for a Borel set $K$ by 
$$
{\rm bar}_\gamma(K):= \int_{K} x\, \frac{d\gamma}{\gamma(K)}.
$$
We say that $K$ is centered if ${\rm bar}_{\gamma}(K) =0$. 
In fact, this normalization was proposed by Szarek and Werner \cite{SW}. There, they proved \eqref{e:OriginGCI} for any convex body $K_1 \subset \R^n$ and strip $K_2 = \{ x \in \R^n \, :\, a \le \langle x, u \rangle \le b\}$, where $u \in \mathbb{S}^{n-1}$ and $a,b \in \R$, such that ${\rm bar}_\gamma(K_1)$ and ${\rm bar}_\gamma(K_2)$ lie in the same hyperplane $\{ x \in \R^n \, :\, \langle x, u \rangle =c\}$ for some $c \in \R$. 
Given their results, Szarek and Werner \cite[Problem 2]{SW} proposed a formulation of the non-symmetric version of the Gaussian correlation inequality and asked the following question:  If $K_1,K_2\subset \R^n$ are convex sets with ${\rm bar}_\gamma(K_1) = {\rm bar}_\gamma(K_2)$, then does \eqref{e:OriginGCI} hold? 
Our first main result gives an affirmative answer to this question, as well as the first question (i) for reasonable convex sets.  

\begin{theorem}\label{t:NonSymGCI}
    For any convex sets $K_1,K_2 \subset \R^n$ with ${\rm bar}_\gamma(K_1)={\rm bar}_\gamma(K_2)$, we have \eqref{e:OriginGCI}: 
        $$
        \gamma(K_1\cap K_2) \ge \gamma(K_1)\gamma(K_2). 
        $$
\end{theorem}

In particular, this shows that the Gaussian correlation inequality \eqref{e:OriginGCI} holds for all centered convex sets. 
We remark that another formulation of the non-symmetric version of the Gaussian correlation inequality was introduced by Cordero-Erausquin \cite{C-E}; see the forthcoming Corollary \ref{cor:Cordero}. 
Even before \cite{SW, C-E}, a certain functional form of the non-symmetric inequality had been studied by Hu \cite{Hu}; see also the work of Harg\'{e} \cite{Harge2}. However, their results imposed convexity on the input function, rather than log-concavity, and do not provide any geometric inequality. 

The second question (ii) concerns the equality case of Theorem \ref{t:NonSymGCI}. 
We mention that if $K_1,K_2$ are symmetric, then equality in \eqref{e:OriginGCI} holds if and only if there exists a subspace $E $ of $\R^n$ such that 
\begin{equation}\label{e:EqualGCISet}
    K_1 = \overline{K}_1 \times E,\quad K_2 = E^\perp \times \overline{K}_2
\end{equation}
    for some symmetric convex sets $\overline{K}_1 \subset E^\perp$ and $\overline{K}_2 \subset E$. 
This can be obtained from Royen's proof of \eqref{e:OriginGCI} for symmetric convex sets; see also \cite{Mil} for more details. 
The second question (ii) has a simple interpretation in probability theory. 
That is, we consider the following question: Let $X$ be a standard Gaussian random vector in $\R^n$. 
What kind of structures should convex sets $K_1,K_2 \subset \R^n$ have for independence of the events $\Omega_i:= \{X \in K_i\}$, $i=1,2$? 
One may naively expect that the same structure such as \eqref{e:EqualGCISet} holds to have this independence. 
However, according to Theorem \ref{t:NonSymGCI}, it turns out that \textit{any} convex bodies $K_1,K_2$ yield the independence of $\Omega_i$ by translating them suitably. 

\if0{\color{blue}
以下，上のパラグラフの代替物なるか？
\\
The second question (ii) concerns the equality case of Theorem \ref{t:NonSymGCI}. 
We mention that if $K_1,K_2$ are symmetric, then equality in \eqref{e:OriginGCI} holds if and only if there exists a subspace $E $ of $\R^n$ such that 
\begin{equation}\label{e:EqualGCISet}
    K_1 = \overline{K}_1 \times E,\quad K_2 = E^\perp \times \overline{K}_2
\end{equation}
    for some symmetric convex sets $\overline{K}_1 \subset E^\perp$ and $\overline{K}_2 \subset E$. 
This can be obtained from Royen's proof of \eqref{e:OriginGCI} for symmetric convex sets; see also \cite{Mil} for more details. 
As we have briefly mentioned in the beginning of our paper, {\color{green}前のページの記述は削除したので，こっちはもとのパラグラフのままで良いかと．}the equality case of \eqref{e:OriginGCI} is the same thing as independence of events. Hence, the equality case above reveals that given symmetric convex sets $K_1, K_2$ and a standard Gaussian random vector $X$ in $\R^n$, $\{X \in K_1\}$ and $\{X \in K_2\}$ are independent if and only if $K_1$ and $K_2$ have the structures \eqref{e:EqualGCISet}.  
Keeping this in mind, our second question (ii) is asking what kind of structures non-symmetric convex sets should have for independence of events corresponding to these sets.  
More strongly, we may ask if the same structure such as \eqref{e:EqualGCISet} holds. 
Unfortunately, this is not the case in the non-symmetric case as we may see in the following corollary which appears from Theorem \ref{t:NonSymGCI}. 
}
\fi 

\begin{corollary}\label{cor:Translate}
Let $X$ be a standard Gaussian random vector in $\R^n$. For any convex bodies $K_1,K_2 \subset \R^n$, there exist translations $a_1,a_2 \in \R^n$ such that the events $\{X\in K_1 + a_1\}$ and $\{X\in K_2 + a_2\}$ are independent. 
\end{corollary}
It seems likely that this property has been proven before, but as we could not find a suitable reference, we include a brief proof, appealing to the non-symmetric Gaussian correlation inequality, in Appendix. 
Because of this corollary, we need to impose some normalization in order for investigating structures of $K_1,K_2$.
%
As in Theorem \ref{t:NonSymGCI}, we here impose ${\rm bar}_\gamma(K_1) = {\rm bar}_\gamma(K_2)$ as a normalization. 
We denote the covariance matrix of a probability measure $\mu$ with finite second moment by 
$$
{\rm Cov}\, (\mu):= \left( \int_{\R^n} x_ix_j\, d\mu - \int_{\R^n} x_i\, d\mu\int_{\R^n} x_j\, d\mu \right)_{i,j=1,\ldots,n}. 
$$
The following theorem provides an answer to the second question (ii) under the same conditions in Theorem \ref{t:NonSymGCI}.

\begin{theorem}\label{t:EqualityCase}
    Let $K_1,K_2\subset \R^n$ be convex sets with ${\rm bar}_\gamma(K_1)= {\rm bar}_\gamma(K_2)$ and $X$ be a standard Gaussian random vector in $\R^n$. 
    Then $\{ X\in K_1 \}$ and $\{ X\in K_2 \}$ are independent if and only if 
    \begin{equation}\label{e:NecCentering}
    {\rm bar}_\gamma(K_1) = {\rm bar}_\gamma(K_2)=0
    \end{equation}
    and \eqref{e:EqualGCISet} hold for  
    $$
    E := \big\{ x\in \R^n:\; {\rm Cov}\, (\mu) x = x \big\},\quad \text{where}\quad d\mu(x) := \frac{1}{\gamma(K_1)} \mathbf{1}_{K_1}d\gamma(x), 
    $$
    and some {convex sets} $\overline{K}_1 \subset E^\perp$ and $\overline{K}_2 \subset E$. 
\end{theorem}

A few remarks are in order. First, the subspace $E$ in the above statement is nothing but the eigenspace of ${\rm Cov}\, (\mu)$ corresponding to the eigenvalue 1. 
Second, $\overline{K_1}$ and $\overline{K_2}$ above necessarily satisfy ${\rm bar}_{\gamma_{E^\perp}}(\overline{K_1}) = {\rm bar}_{\gamma_{E}}(\overline{K_2}) =0$, where $\gamma_E$, $\gamma_{E^\perp}$ are standard Gaussians on $E$, $E^\perp$ respectively by \eqref{e:NecCentering}. 
As a final remark, one may also rephrase this characterization in terms of independence of random variables. 
For a convex set $K$ with $0\in {\rm int} K$, the Minkowski functional, taking its value in $[0,\infty)$, is defined as $\| x \|_{K}:= \inf \{ r>0 : x \in r K \}$ for $x \in \R^n$. 
For given two convex sets $K_1,K_2$ with $0\in {\rm int} K_1, {\rm int} K_2$, we obtain two random variables in $[0,\infty)$ by $X_i:= \| X \|_{K_i}$, $i=1,2$, where $X$ is a standard Gaussian random vector in $\R^n$. 
As usual, $X_1,X_2$ are said to be independent if $\mathbb{P}( X_1 \le r_1,\; X_2 \le r_2 ) = \mathbb{P}( X_1 \le r_1) \mathbb{P}( X_2 \le r_2)$ for all $r_1,r_2\ge0$. 
In our case, 
$$
\mathbb{P}( X_1 \le 1,\; X_2 \le 1 ) = \mathbb{P}( X \in K_1 \cap K_2 ), 
$$
and thus the following is an immediate consequence from Theorem \ref{t:EqualityCase}: 
\begin{corollary}
    Let $K_1,K_2\subset \R^n$ be convex sets with ${\rm bar}_\gamma(K_1)= {\rm bar}_\gamma(K_2)$ and $0\in {\rm int} K_1, {\rm int} K_2$. 
    Let also $X$ be a standard Gaussian random vector in $\R^n$. 
    Then two random variables $\|X\|_{K_1}, \|X\|_{K_2}$ are independent if and only if \eqref{e:EqualGCISet} and \eqref{e:NecCentering} hold. 
\end{corollary}


For the purpose of proving these results, the existing arguments of Royen and Milman do not readily extend. 
This is because what they actually proved in their proofs is \eqref{e:ActualGCI} rather than \eqref{e:OriginGCI}. As we explained, \eqref{e:ActualGCI} is a special case of \eqref{e:OriginGCI}, and one needs to invoke an approximation argument to recover \eqref{e:OriginGCI} from \eqref{e:ActualGCI}. 
This involves approximating a symmetric convex set by a countably infinite union of slabs and a degenerate Gaussian by a nondegenerate Gaussian.
Given these difficulties, we instead generalize our previous work on the inverse Brascamp--Lieb inequality \cite{NT3} and give a more direct proof of the Gaussian correlation inequality \eqref{e:OriginGCI}, avoiding the approximation arguments mentioned above.
In addition to this ingredient, in order to prove Theorem \ref{t:EqualityCase}, we are led to the following question. Let $E$ be the eigenspace of the covariance matrix of a given probability measure $d\mu$ on $\R^n$ corresponding to eigenvalue 1. Since the eigenspaces of other eigenvalues are orthogonal to $E$, the covariance matrix orthogonally splits into components on $E$ and $E^\perp$. 
The question is: under what conditions on $\mu$ will the pointwise behavior of $d\mu$ be inherited by this splitting property? More precisely, when does $d\mu(x)$ split into the product measure $d\gamma(x_E)d\overline{\mu}(x_{E^\perp})$, $x= x_E+x_{E^\perp} \in \R^n= E\oplus E^\perp$, for some $\overline{\mu}$? 
Through this question, we reveal a connection between the Gaussian correlation inequality and the study of rigidity problems for the spectral gap of the drifted Laplacian such as the works of Cheng and Zhou \cite{CZ} and Gigli, Ketterer, Kuwada, and Ohta \cite{GKKO}.  


\if0
We now give the precise statements of our main results. 
{\color{red}This confirms Szarek--Werner etc...}

Our next result concerns the equality case. 
Before stating the result, recall our motivating question: For given two Borel sets $K_1,K_2 \subset \R^n$, when are the two events $\Omega_i:= \{X\in K_i\}$, $i=1,2$, independent? 
{\color{red} Discussion about $\Omega_i$ could be independent for any $K_i$ by an appropriate translation etc...}

\begin{theorem}\label{t:EqualityCase}
    Let $K_1,K_2\subset \R^n$ be convex sets with ${\rm bar}_\gamma(K_1)= {\rm bar}_\gamma(K_2)$ and $X$ be the standard Gaussian random vector in $\R^n$. 
    Then $\{ X\in K_1 \}$ and $\{ X\in K_2 \}$ are independent if and only if 
    $$
    {\rm bar}_\gamma(K_1) = {\rm bar}_\gamma(K_2)=0
    $$
    and there exists a subspace $E $ of $\R^n$ such that 
    \begin{equation}\label{e:EqualGCISet}
    K_1 = \overline{K}_1 \times E,\quad K_2 = E^\perp \times \overline{K}_2
    \end{equation}
    for some {convex sets} $\overline{K}_1 \subset E^\perp$ and $\overline{K}_2 \subset E$. 
\end{theorem}

Our main result is as follows. 
\begin{theorem}\label{t:CenterGCI+Equal}
    The Gaussian correlation inequality \eqref{e:OriginGCI} holds for any centered convex sets $K_1,K_2\subset \mathbb{R}^n$ meaning that $\int_{K_i} x \, d\gamma =0$, $i=1,2$. Moreover, equality is achieved if and only if there exists a subspace $E $ of $\R^n$ such that 
    \begin{equation}\label{e:EqualGCISet}
    K_1 = \overline{K}_1 \times E,\quad K_2 = E^\perp \times \overline{K}_2
    \end{equation}
    for some $\overline{K}_1 \subset E^\perp$ and $\overline{K}_2 \subset E$. 
\end{theorem}
{\color{blue}
\begin{theorem}
    Let $K_1,K_2\subset \R^n$ be convex sets and $X$ be the standard Gaussian random vector in $\R^n$. 
    Then $\{ X\in K_1 \}$ and $\{ X\in K_2 \}$ are independent if and only if 
    $$
    {\rm bar}_\gamma(K_1) = {\rm bar}_\gamma(K_2)=0
    $$
    and there exists a subspace $E $ of $\R^n$ such that 
    \begin{equation}\label{e:EqualGCISet}
    K_1 = \overline{K}_1 \times E,\quad K_2 = E^\perp \times \overline{K}_2
    \end{equation}
    for some {convex sets} $\overline{K}_1 \subset E^\perp$ and $\overline{K}_2 \subset E$. 
\end{theorem}

Remark: $\overline{K_1}$ and $\overline{K_2}$ above necessarily satisfy  
    $$
    \int_{\overline{K}_1} x_{E^\perp} \,d\gamma_{E^\perp} = \int_{\overline{K}_2} x_{E} \,d\gamma_E=0, 
    $$ 
    where $\gamma_E$, $\gamma_{E^\perp}$ are standard Gaussians on $E$, $E^\perp$ respectively. 
    This is because 
    $$
    \int_{\overline{K}_1} x_{E^\perp} \,d\gamma_{E^\perp} 
    =
    \int_{\overline{K}_1\times E} x_{E^\perp} \,d\gamma(x)
    =
    \int_{\overline{K}_1\times E} x_{E^\perp} \,d\gamma(x)
    + 
    \int_{\overline{K}_1\times E} x_{E} \,d\gamma(x)
    =
    {\rm bar}_\gamma(K_1)
    = 
    0
    $$
    since $\int_{\overline{K}_1\times E} x_{E} \,d\gamma(x) = \gamma_{E^\perp}(\overline{K_1}) \int_E x_E\, d\gamma_E = 0. 
$

}

A few remarks are in order. 
First, the subspace $E$ in the above statement is explicit, and it is the eigenspace of the covariance matrix of a probability measure $d\mu(x) = \frac{1}{\gamma(K_1)} \mathbf{1}_{K_1}d\gamma(x)dx$ corresponds to the eigenvalue 1. 
Second, as a consequence of this result, we may derive the following probabilistic consequence. 
Recall that $X$ is the standard Gaussian random vector in $\R^n$. 
Then for any centered convex sets $K_1,K_2\subset \R^n$, 
$$
\Omega_1 = \{X\in K_1\},\; \Omega_2 = \{X\in K_2\}:\; \text{independent}\quad \Leftrightarrow\quad \eqref{e:EqualGCISet}. 
$$
It is indeed possible to consider a more general case of the centered Gaussian random vector with arbitrary covariance, and derive a similar consequence. This is because one may reduce the problem to the case of the standard Gaussian random vector by using an appropriate linear invertible transform.  
{\color{red} NEED TO BE CONSIDERED: 
Therefore, it is also possible to characterize the case of equality in \eqref{e:ProbGCI}. 
For instance, if we denote the covariance matrix of $(Y_1,Y_2)$ by $\Sigma$ then, for fixed $L_1,L_2$, equality in \eqref{e:ProbGCI} holds if and only if $K_1 = \Sigma^{-\frac12} ( L_1 \times \R^{d_2} )$ and $K_2 = \Sigma^{-\frac12} ( \R^{d_1}\times L_2 ) $ satisfy \eqref{e:EqualGCISet}.  
IS IT POSSIBLE TO GIVE A CLEARER STATEMENT FOR THIS?}
In the specific case of $L_1 = [-1,1]^{d_1}$ and $L_2 = [-1,1]^{d_2}$, which corresponds to \eqref{e:ActualGCI}, it is likely, apart from technical justifications, that the argument of Royen \cite{Roy14} yields the same characterization of the case of equality.  
{Third, it is also likely, apart from technical justifications, that the proof of Pitt \cite{Pitt} yields the same characterization of the case of equality for \eqref{e:OriginGCI} when $n=2$ and $K_1,K_2$ are symmetric.  }
\fi 

Our arguments for Theorem \ref{t:NonSymGCI} yield a further extension of \eqref{e:OriginGCI} under the centering assumption. 
For example, we establish the multilinear extension of \eqref{e:OriginGCI}. 
For a symmetric positive definite matrix $\Sigma$, the centered Gaussian with the covariance $\Sigma$ is denoted by 
$$
\gamma_{\Sigma}(x):= 
({\rm det}\, (2\pi \Sigma))^{-\frac12} e^{-\frac12 \langle x, \Sigma^{-1}x\rangle},\quad x\in \R^n. 
$$ 

\begin{theorem}\label{t:GenCor}
    Let $m \ge 2$ and $\Sigma_0, \Sigma_1, \dots, \Sigma_m \in \R^{n \times n}$ be  symmetric positive definite matrices with $\Sigma_0^{-1} \ge \Sigma_1^{-1}, \dots, \Sigma_m^{-1}$. Then for any centered convex sets $K_1, \dots, K_m \subset \R^n$ in the sense that $\int_{K_i} x_i d\gamma_{\Sigma_i}(x_i) = 0$, it holds that 
    \begin{equation}\label{e:GCI-General}
    \gamma_{\Sigma_0} \left( \bigcap_{i=1}^m K_i \right) 
    \ge
    \prod_{i=1}^m \gamma_{\Sigma_i}(K_i). 
    \end{equation}
\end{theorem}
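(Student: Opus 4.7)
The plan is to prove Theorem~\ref{t:GenCor} as a direct multilinear extension of the argument used for Theorem~\ref{t:CenterGCI+Equal}. Writing $f_i := \mathbf{1}_{K_i}$, the desired inequality rewrites as
\begin{equation*}
    \int_{\R^n} \prod_{i=1}^m f_i(x) \, d\gamma_{\Sigma_0}(x)
    \ge
    \prod_{i=1}^m \int_{\R^n} f_i(x) \, d\gamma_{\Sigma_i}(x),
\end{equation*}
in which each $f_i$ is a log-concave function that is centered with respect to its own reference Gaussian $\gamma_{\Sigma_i}$. This is the natural multilinear analogue of the correlation inequality for centered log-concave functions underlying Theorem~\ref{t:CenterGCI+Equal}, with the additional feature that the $m$ centerings are taken with respect to $m$ possibly distinct Gaussian measures.

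The strategy is to cast the inequality as an instance of a multilinear centered inverse Brascamp--Lieb inequality with trivial linear data $B_i = \mathrm{id}_{\R^n}$ and weights $c_i = 1$, with target reference $\gamma_{\Sigma_0}$ and source references $\gamma_{\Sigma_i}$. One first verifies that $\Sigma_0^{-1} \ge \Sigma_i^{-1}$ is the correct Gaussian compatibility condition for the inverse BL estimate in this regime. Since this condition is ``one-sided'' rather than ``balanced'' (the balanced counterpart for trivial BL data would be $\Sigma_0^{-1} = \sum_i \Sigma_i^{-1}$), I would split $\Sigma_0^{-1} = \Sigma_i^{-1} + E_i$ with $E_i \ge 0$ for each $i$ and absorb the resulting positive Gaussian factors into the $f_i$ without destroying log-concavity or the centering condition. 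One then applies the inverse Brascamp--Lieb inequality to the modified log-concave inputs, using the hypotheses $\int_{K_i} x \, d\gamma_{\Sigma_i} = 0$ to invoke the centered (rather than symmetric) version of the estimate, exactly as in the proof of Theorem~\ref{t:CenterGCI+Equal}.

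The main obstacle is to establish the inverse Brascamp--Lieb inequality in this multilinear, multi-reference, unbalanced setting, since the framework developed in \cite{NT3} primarily treats the bilinear, single-reference situation. Concretely, one must (i) identify the Gaussian extremizers in the multilinear regime with $m$ distinct reference Gaussians; (ii) upgrade the symmetric log-concave argument to the centered log-concave setting with $m$ inputs simultaneously, presumably via the same heat-flow or variational device used in the proof of Theorem~\ref{t:CenterGCI+Equal}; and (iii) verify that the one-sided splitting of $\Sigma_0^{-1}$ described above delivers \eqref{e:GCI-General} in the correct direction. No characterization of the equality case is pursued here, which is consistent with the statement of Theorem~\ref{t:GenCor} and with the fact that, in the multi-reference regime, equality configurations are expected to be considerably more intricate than the factorization described in Theorem~\ref{t:CenterGCI+Equal}.
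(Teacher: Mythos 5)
Your high-level plan --- recast \eqref{e:GCI-General} as a centered inverse Brascamp--Lieb inequality with $B_i = \mathrm{id}_n$, $c_i=1$, and then invoke Gaussian saturation --- matches the paper's strategy. But the proposal has two genuine gaps.

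First, you treat the ``multilinear, multi-reference, centered'' inverse Brascamp--Lieb inequality as the main obstacle still to be established, and claim the framework of \cite{NT3} is ``primarily bilinear, single-reference.'' This is not so: Theorem~\ref{t:NT3} is already multilinear in $m$ inputs, and the paper's Theorem~\ref{t:CenteredIBL} upgrades it to centered (rather than even) inputs, with arbitrary Brascamp--Lieb data $(\mathbf{B},\mathbf{c},\mathcal{Q})$ and arbitrary regularization parameters $\mathbf{G}$. That theorem is stated \emph{before} Theorem~\ref{t:GenCor} precisely because the derivation of Theorem~\ref{t:GenCor} is supposed to be a direct application of it. Your items (i)--(ii) are therefore not work left to do but machinery already supplied.

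Second, the specific decomposition you sketch does not close. You propose to split $\Sigma_0^{-1} = \Sigma_i^{-1}+E_i$ with $E_i\ge 0$ and ``absorb the resulting positive Gaussian factors into the $f_i$.'' If this means $f_i = \mathbf{1}_{K_i}\, e^{-\frac12\langle E_i x,x\rangle}$, then (a) the hypothesis $\int_{K_i} x\, d\gamma_{\Sigma_i}=0$ does \emph{not} imply $\int x f_i\, dx = 0$, so the centering needed to invoke $\mathcal{F}^{(o)}_{G_i,\infty}$ is lost; and (b) the product $\prod_i f_i(x)$ carries the exponent $-\frac12\langle(\sum_i E_i)x,x\rangle$ with $\sum_i E_i = m\Sigma_0^{-1}-\sum_i\Sigma_i^{-1}$, which is not $\Sigma_0^{-1}$, so you do not recover $\gamma_{\Sigma_0}(\cap_i K_i)$ on the left. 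The paper instead absorbs \emph{all} of $\Sigma_i^{-1}$ into each input, setting $f_i(x)=\mathbf{1}_{K_i}(x)\,e^{-\frac12\langle\Sigma_i^{-1}x,x\rangle}$ --- which is simultaneously centered (precisely because $\int_{K_i}x\,d\gamma_{\Sigma_i}=0$) and $\Sigma_i^{-1}$-uniformly log-concave --- and channels the remaining mismatch through the quadratic weight $\mathcal{Q}=\frac12\sum_i\Sigma_i^{-1}-\frac12\Sigma_0^{-1}$, whose sign is irrelevant for Theorem~\ref{t:CenteredIBL}. Finally, after Gaussian saturation one must still show that the resulting Gaussian constant equals $1$; this is a nontrivial determinant inequality (Lemma~\ref{l:GenGaussConst}) where the hypothesis $\Sigma_0^{-1}\ge\Sigma_i^{-1}$ actually enters, and this step is entirely missing from your sketch.
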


Clearly, the case of $m=2$ and $\Sigma_0 = \Sigma_1 = \Sigma_2 = {\rm id}_n$ coincides with \eqref{e:OriginGCI}. 
There are further remarks. 
\begin{enumerate}
    \item 
    In the case of $\Sigma_0=\cdots=\Sigma_m = {\rm id}_n$, Theorem \ref{t:GenCor} states that 
    \begin{equation}\label{e:m-GCI}
    \gamma \left( \bigcap_{i=1}^m K_i \right) 
    \ge
    \prod_{i=1}^m \gamma(K_i) 
    \end{equation}
    holds whenever ${\rm bar}_\gamma(K_i) =0$,  $i=1,\ldots, m$. 
    If $K_1,\ldots,K_m$ all are symmetric convex sets, then \eqref{e:m-GCI} easily follows from the classical case of $m=2$. 
    However, in the framework of centered convex sets, the same reduction does not work since the centering assumption is not always preserved under the intersection operation. 
    \item 
    Even when $m=2$ and $K_1,K_2$ are symmetric, \eqref{e:GCI-General} with generic $\Sigma_0,\Sigma_1,\Sigma_2$ does not follow directly from \eqref{e:OriginGCI}.
    \item 
    If one drops the assumption $\Sigma_0^{-1} \ge \Sigma_1^{-1}, \dots, \Sigma_m^{-1}$ in the above statement, then the inequality \eqref{e:GCI-General} 
    does not always hold true. 
    \item 
    {It is non-trivial to characterize the case of equality in \eqref{e:GCI-General} with this level of generality even when all $K_i$ are symmetric. We leave this problem to interested readers.}
\end{enumerate}

The structure of this note is as follows. 
In section 2, we will give a brief introduction of the symmetric inverse Brascamp--Lieb inequality, and exhibit our result on this inequality. The proof of this result is given in Section 3. 
In Section 4, we prove Theorems \ref{t:NonSymGCI} and \ref{t:GenCor}. 
In Section 5, we complete the proof of Theorem \ref{t:EqualityCase}. 

\section{The inverse Brascamp--Lieb inequality}
It is very recent that Milman \cite{Mil} gave a simple proof of the symmetric Gaussian correlation inequality by realizing that it is an example of the \textit{symmetric inverse Brascamp--Lieb inequality} with a suitable regularization.  
The later is a family of multilinear functional inequalities that has been investigated by the authors \cite{NT3}. 
The word \textit{symmetric} comes from the crucial assumption in there that input functions are supposed to be even. 
The theory of the inverse Brascamp--Lieb inequality without any symmetry has been developed by Chen, Dafnis, and Paouris \cite{CDP} and Barthe and Wolff \cite{BW} before the authors. 
{Our approach to the Gaussian correlation inequality is also based on the inverse Brascamp--Lieb inequality. However, our way of using the latter inequality is slightly different from Milman's one, and this point is crucial for our purpose of proving Theorem \ref{t:NonSymGCI}.}

Below, we first give a brief overview of a recent development regarding the theory of the inverse Brascamp--Lieb inequality, and then exhibit our result on the centered Gaussian saturation principle under the centering assumption. 
We will abbreviate the positive definiteness of a symmetric matrix $A$ by just writing $A>0$, and similarly the positive semi-definiteness by $A\ge 0$. 
We also denote $A_1 > A_2$ (and $A_1\ge A_2$) when $A_1 - A_2 >0$ (and $A_1 - A_2 \ge 0$). 
For later use, we introduce the notation 
$$g_A(x):= e^{-\frac12 \langle x, Ax\rangle},\quad x\in \R^n$$ for $A>0$. 
Let $m,n_1,\ldots, n_m, N\in \mathbb{N}$ and take a linear map $B_i:\R^N\to \R^{n_i}$ for each $i=1,\ldots,m$. 
We  also take $c_1,\ldots, c_m >0$ and a symmetric matrix $\mathcal{Q}\in \R^{N\times N}$. 
By abbreviating $\mathbf{B}= (B_1,\ldots,B_m)$ and  $\mathbf{c} = (c_1,\ldots,c_m)$,  we refer $(\mathbf{B},\mathbf{c},\mathcal{Q})$ as the \textit{Brascamp--Lieb datum}. 
For each fixed Brascamp--Lieb datum $(\mathbf{B},\mathbf{c},\mathcal{Q})$, we are interested in the Brascamp--Lieb functional 
$$
{\rm BL}({\bf f})
=
{\rm BL}( {\bf B}, {\bf c}, \mathcal{Q} ; {\bf f}) 
\coloneqq
\frac{ \int_{\R^N} e^{\langle x, \mathcal{Q} x\rangle} \prod_{i=1}^m f_i(B_ix)^{c_i}\, dx}{ \prod_{i=1}^m \left( \int_{\R^{n_i}} f_i\, dx_i \right)^{c_i} }
$$
defined for ${\bf f} = (f_1, \dots, f_m) \in L_+^1(\R^{n_1}) \times \cdots \times L_+^1(\R^{n_m})$. 
Here we denote a class of all nonnegative and integrable functions (which is non-zero) by 
$L_+^1(\R^n) := \{ f \in L^1(\R^n) \, :\, f\ge 0, \int_{\R^n} f\, dx >0\}$.
When one concerns the upper bound of the Brascamp--Lieb functional, it is called as the forward Brascamp--Lieb inequality. 
Similarly, the lower bound of the functional is called the inverse Brascamp--Lieb inequality. 


We give a brief history about this family of inequalities. 
The motivating examples are the sharp forms of forward and inverse Young's convolution inequality that give sharp upper and lower bounds of the functional 
\begin{align*}
\int_{\mathbb{R}^{2n}} f_1(x_1)^{\frac1{p_1}} f_2(x_2)^\frac1{p_2} f_3(x_1-x_2)^{\frac1{p_3}}\, dx_1dx_2 \bigg{/} \prod_{i=1}^3 \big( \int_{\mathbb{R}^n} f_i\, dx_i \big)^{\frac1{p_i}}. 
\end{align*}
Here the upper bound is meaningful only when $p_i\ge1$ and so is the lower bound only when $p_i<1$. Also the scaling condition $\sum_{i=1}^3 \frac1{p_i} = 2$ is necessary to have any nontrivial bound. 
The celebrated works of Beckner \cite{Beckner} as well as Brascamp and Lieb \cite{BraLi_Adv} identify the sharp upper and lower bound:  the best constant is achieved by an appropriate centered Gaussian.  
Another example with $\mathcal{Q}\neq 0$ is the dual form of Nelson's hypercontractivity for Ornstein--Uhlenbeck semigroup. 
The strength of the forward Brascamp--Lieb inequality has been repeatedly revealed since its born. 
For instance, Ball \cite{BallJLMS} penetrated it in the context of convex geometry. He applied the forward  Brascamp--Lieb inequality in order to derive his volume {ratio} estimate as well as the reverse isoperimetric inequality. 
Applications and perspectives of the theory {stems into} Harmonic analysis, combinatorics, analytic number theory,  convex / differential geometry, probability, stochastic process and statistics, statistical mechanics, information theory, and theoretical computer science; we refer interested readers to references in \cite{BBBCF,BCCT}.  
Whole theory of the forward Brascamp--Lieb inequality is underpinned by the fundamental principle so-called \textit{centered Gaussian saturation principle} that was established by Lieb \cite{Lieb}. 
\begin{theorem}[Lieb \cite{Lieb}]\label{t:Lieb}
Let $(\mathbf{B},\mathbf{c},\mathcal{Q})$ be the Brascamp--Lieb datum and $\mathcal{Q}\le0$. 
Then for any $f_i \in L^1_+(\R^{n_i})$, $i=1,\ldots, m$, 
$$
\int_{\R^N} e^{\langle x, \mathcal{Q} x\rangle} \prod_{i=1}^m f_i(B_ix)^{c_i}\, dx 
\le 
\big(
\sup_{A_i>0\; \text{on}\; \R^{n_i}} {\rm BL}(g_{A_1},\ldots, g_{A_m} ) 
\big)
\prod_{i=1}^m \left( \int_{\R^{n_i}} f_i\, dx_i \right)^{c_i}.  
$$
\end{theorem}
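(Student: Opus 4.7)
The plan is to prove Lieb's theorem by heat-flow monotonicity, in the spirit of Carlen--Lieb--Loss and Bennett--Carbery--Christ--Tao: evolve arbitrary nonnegative inputs $\mathbf{f}=(f_1,\dots,f_m)$ along a family of Gaussian convolution semigroups (so they approach centered Gaussians after rescaling) while showing that the modulated Brascamp--Lieb functional is monotone in the correct direction, so that the value on $\mathbf{f}$ is controlled by the asymptotic value on Gaussians.

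After a standard approximation replacing each $f_i$ by a smooth, strictly positive Schwartz function (to license differentiation under the integral), fix positive definite matrices $A_i\in\R^{n_i\times n_i}$ and let $u_i(t,\cdot):=f_i\ast\gamma_{tA_i^{-1}}$, which solves the heat equation $\partial_t u_i=\tfrac12\operatorname{tr}(A_i^{-1}D^2 u_i)$ while preserving $\|u_i(t,\cdot)\|_{L^1}=\|f_i\|_{L^1}$. Define
$$
\Phi(t):=\int_{\R^N}e^{\langle x,\mathcal{Q}x\rangle}\prod_{i=1}^m u_i(t,B_i x)^{c_i}\,dx.
$$
The key step is to differentiate $\Phi(t)$ (after a suitable time-dependent normalization that absorbs the parabolic scaling), substitute the heat equations, and integrate by parts on $\R^N$: the resulting integrand can be rearranged as a sum-of-squares quadratic form in the score vectors $\nabla\log u_i\circ B_i$ plus a contribution of the form $\langle v(t,x),(-\mathcal{Q})v(t,x)\rangle$, and the hypothesis $\mathcal{Q}\le 0$ then forces monotonicity in the desired direction.

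For the endpoint, I would rescale $x\mapsto\sqrt{t}\,x$ on $\R^N$ and exploit the fact that $u_i(t,\sqrt{t}\,z)$ behaves, up to a normalization factor depending only on $t$ and $A_i$, like $\|f_i\|_{L^1}g_{A_i}(z)$ as $t\to\infty$; the rescaled normalized functional then converges to a multiple of $\prod_i\|f_i\|_{L^1}^{c_i}\cdot\mathrm{BL}(g_{A_1},\dots,g_{A_m})$, and taking the supremum over $A_i>0$ completes the proof. The main obstacle is twofold: algebraically, one must express $\Phi'(t)$ as a manifestly nonnegative quadratic form plus a term of definite sign under $\mathcal{Q}\le 0$, which requires a careful decomposition in terms of conditional expectations on $\R^N$ given the projections $B_i x$; analytically, one must choose the rescaling correctly so that the ambient modulator $e^{\langle x,\mathcal{Q}x\rangle}$ is asymptotically absorbed rather than blowing up or killing the integral, and one must propagate enough Gaussian-type decay along the heat flow to justify the differentiation, the integration by parts, and the limit passage.
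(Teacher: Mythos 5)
The paper states Theorem \ref{t:Lieb} as a cited result of Lieb and does not supply a proof, so there is no internal argument to compare against; your sketch must be judged on its own merits.

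Your heat-flow proposal is a standard modern route to Lieb's theorem (Carlen--Lieb--Loss, Bennett--Carbery--Christ--Tao), but as written it has a genuine gap. You fix \emph{arbitrary} positive definite $A_i$ and assert that, after integration by parts, $\Phi'(t)$ organizes into a sum of squares plus a term controlled by $\mathcal{Q}\le 0$. This is false for generic $A_i$: the cross terms between the score vectors $B_i^*\nabla\log u_i(B_i x)$ combine into a nonnegative quadratic form only when the diffusion matrices satisfy a compatibility (``geometric'') matrix inequality tailored to $(\mathbf{B},\mathbf{c},\mathcal{Q})$ --- essentially that $(A_i)$ is a Gaussian extremizer. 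For a concrete failure take $m=2$, $B_1=B_2=\mathrm{id}$, $c_1=c_2=\tfrac12$, $\mathcal{Q}=0$: then $\mathrm{BL}(g_{A_1},g_{A_2}) = (\det A_1\det A_2)^{1/4}/(\det\tfrac{A_1+A_2}{2})^{1/2}\le 1$ with equality iff $A_1=A_2$, so if you run heat flows with mismatched $A_1\neq A_2$ the rescaled functional tends to a value strictly below $1$, yet $\Phi(0)=1$ when $f_1=f_2$; hence $\Phi$ is not monotone for that choice. Worse, the Gaussian supremum need not be attained (it can be $+\infty$, or approached only along degenerate sequences), so one cannot simply ``pick the extremizing $A_i$''; a full proof needs either Lieb's original rotation/tensorization argument or, in the heat-flow framework, an additional geometrization and approximation step (as in BCCT) before the flow can be closed. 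Your sketch explicitly defers exactly this --- calling it ``the main obstacle'' --- and since that step is where the content of Lieb's theorem lies, the proposal does not yet constitute a proof.
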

As the name suggests, this principle reduces the problem of identifying the best constant of the inequality to the investigation of just centered Gaussians. 
Remark that there is no nontrivial upper bound if $\mathcal{Q} \not\le 0$. 

Compared to the forward case, the inverse Brascamp--Lieb inequality is relatively new topic. 
Its archetypal example is the sharp lower bound of Young's convolution functional that we explained above, although, to be precise, one needs to consider some of $c_i$'s are negative in this case. We remark that Chen, Dafnis, and Paouris \cite{CDP} and Barthe and Wolff \cite{BW} have considered such a scenario too. Our analysis in this paper is also applicable to deal with negative $c_i$ to some extent, but we stick to positive ones in this section. 
In \cite{CDP}, Chen, Dafnis, and Paouris initiated the study of the inverse Brascamp--Lieb inequality for some class of the Brascamp--Lieb datum. 
Shortly after, Barthe and Wolff \cite{BW} have developed a systematic study for more general class of Brascamp--Lieb data under their nondegenracy condition imposed on the datum $(\mathbf{B},\mathbf{c},\mathcal{Q})$.
Although their nondegeneracy condition is fairly general, it is vital to establsih the theory without it in order to reveal the link to the Blaschke--Santal\'{o}-type inequality in convex geometry; see the introduction of \cite{NT3} for more detailed discussion. 
Motivated by this link, the authors investigated the inverse Brascamp--Lieb inequality with a certain class of Brascamp--Lieb data that are relevant to the application to convex geometry. 
There, we proposed to study the inverse Brascamp--Lieb inequality by assuming some  symmetric assumption on $f_i$ rather than imposing the nondegeneracy condition on $(\mathbf{B},\mathbf{c},\mathcal{Q})$. More precisely, we established the following centered Gaussian saturation principle.  
\begin{theorem}[{\cite[Theorem 1.3]{NT3}}]\label{t:NT3}
Let $m,n_1,\ldots, n_m \in \mathbb{N}$, $c_1,\ldots,c_m>0$, and $\mathcal{Q}$ be a symmetric matrix on $\R^N:= \bigoplus_{i=1}^m \R^{n_i}$. 
Then for any even and log-concave $f_i \in L^1_+(\R^{n_i})$, $i=1,\ldots, m$, 
$$
\int_{\R^N} e^{\langle x, \mathcal{Q} x\rangle} \prod_{i=1}^m f_i(x_i)^{c_i}\, dx 
\ge 
\big(
\inf_{A_i>0\; \text{on}\; \R^{n_i}} {\rm BL}(g_{A_1},\ldots, g_{A_m} ) 
\big)
\prod_{i=1}^m \left( \int_{\R^{n_i}} f_i\, dx_i \right)^{c_i},
$$
where we choose $B_i:\R^N\to \R^{n_i}$ as the orthogonal projection onto $\R^{n_i}$ in the right-hand side. 
\end{theorem}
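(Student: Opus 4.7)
The plan is based on Brenier optimal transport combined with Caffarelli's contraction theorem, using the evenness of the $f_i$ to ensure the associated transport maps are odd. An approximation reduces to a nice class of functions, and a Ball--Barthe-type determinant estimate reduces the resulting integral to the centered Gaussian case.

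\textbf{Step 1: Approximation.} By mollifying each $f_i$ with a small Gaussian and multiplying by $e^{-\varepsilon |x_i|^2/2}$, operations that preserve both evenness and log-concavity, I would reduce to the case where each $f_i$ is smooth, even, and strictly log-concave with $-\nabla^2 \log f_i \ge \alpha_i I_{n_i}$ for some $\alpha_i > 0$, and has Gaussian decay. One then removes the regularization at the end by monotone convergence.

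\textbf{Step 2: Transport and change of variables.} Set $\mu_i := f_i/\int f_i$, an even, uniformly log-concave probability measure, and let $\gamma_i$ denote the centered Gaussian on $\R^{n_i}$ with covariance $\alpha_i^{-1}I$. Caffarelli's contraction theorem furnishes a Brenier map $T_i:\R^{n_i}\to\R^{n_i}$ pushing $\gamma_i$ onto $\mu_i$ with $0 \le DT_i \le I$ in the Loewner order, and the evenness of $\mu_i$ forces $T_i$ to be odd with $T_i(0)=0$. Performing the substitution $x_i = T_i(y_i)$ and applying the Monge--Ampere identity $\mu_i(T_i(y_i))\det DT_i(y_i) = \gamma_i(y_i)$ transforms the left-hand side into
$$\int_{\R^N} e^{\langle x, \mathcal{Q} x\rangle} \prod_{i=1}^m f_i(x_i)^{c_i}\, dx = \prod_{i=1}^m \left(\int f_i\right)^{c_i} \int_{\R^N} e^{\langle T(y), \mathcal{Q} T(y)\rangle} \prod_{i=1}^m \gamma_i(y_i)^{c_i}\, (\det DT_i(y_i))^{1-c_i}\, dy,$$
where $T(y) := (T_1(y_1),\ldots,T_m(y_m))$.

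\textbf{Step 3: Reduction to Gaussians.} I would use the oddness of $T_i$ to handle $\langle T(y), \mathcal{Q} T(y)\rangle$, eliminating asymmetric contributions that would otherwise obstruct the Gaussian comparison, and combine this with a Ball--Barthe-type lower bound for $\prod_i (\det DT_i(y_i))^{1-c_i}$. The latter exploits $DT_i \le I$ and the convexity of the Brenier potentials $\phi_i$ (so that $T_i = \nabla \phi_i$ and $DT_i = \nabla^2 \phi_i \ge 0$). The resulting integral is then pointwise bounded below by a centered Gaussian Brascamp--Lieb integral of the form ${\rm BL}(g_{A_1},\ldots,g_{A_m})$ for suitable positive definite $A_i$, and taking the infimum over the choice of $\alpha_i$ (equivalently, over the reference Gaussian) yields the required bound.

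\textbf{Main obstacle.} The critical step is Step 3: because $\mathcal{Q}$ is indefinite, one cannot simply use the Lipschitz bound $|T_i(y_i)| \le |y_i|$ to control $\langle T(y), \mathcal{Q} T(y)\rangle$ in a one-sided manner. The oddness of $T_i$ is what allows the odd-order error terms in a Taylor-type comparison to vanish after integration against the even Gaussian $\prod_i \gamma_i(y_i)^{c_i}$, while the Ball--Barthe inequality must be applied sharply enough to absorb the remaining Hessian factors. This is precisely where the evenness hypothesis is used in an essential way, and removing it would require substantially different techniques.
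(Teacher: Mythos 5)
Your proposal attempts a Caffarelli/Brenier transport argument with a Ball--Barthe determinant bound, which is a genuinely different route from the one taken in the paper (and in \cite{NT3}, which it cites). The paper's method is a self-convolution argument in the spirit of BCCT/BBBCF: one first regularizes to a class $\mathcal{F}^{(o)}_{G,H}$ where an extremizer exists, then proves a monotonicity (``Ball's inequality'', Lemma \ref{l:Monotone*}) of the Brascamp--Lieb functional under the rescaled self-convolution $f \mapsto 2^{n/2} f\ast f(\sqrt 2\,\cdot)$, and finally iterates and invokes the central limit theorem to conclude that Gaussians saturate the infimum. No transport map is used at any stage.

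The gap in your argument is exactly where you flag it, in Step 3, and I do not think it can be patched along the lines you sketch. After the substitution you have the integrand $e^{\langle T(y),\mathcal{Q}T(y)\rangle}\prod_i \gamma_i(y_i)^{c_i}(\det DT_i(y_i))^{1-c_i}$, and the obstruction is $\mathcal{Q}$ being indefinite. When $\mathcal{Q}\le 0$ (forward case), Caffarelli's contraction $|T_i(y_i)|\le|y_i|$ gives a pointwise comparison $\langle T(y),\mathcal{Q}T(y)\rangle \ge \langle y,\mathcal{Q}y\rangle$; for indefinite $\mathcal{Q}$ no such pointwise bound holds in either direction. Your suggestion that oddness of $T_i$ makes ``odd-order error terms vanish after integration'' does not go through: the cross term $\langle T(y),\mathcal{Q}T(y)\rangle - \langle y,\mathcal{Q}y\rangle$ sits inside an exponential, and $\int e^{h}\,d\nu \neq e^{\int h\,d\nu}$, so odd terms in $h$ do not drop out of the integral of $e^h$. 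The oddness of $T_i$ does give the global symmetry $y\mapsto -y$ of the whole integrand, but that is a single reflection and far from enough to reduce to the Gaussian case. There is also no reverse Ball--Barthe inequality available without a nondegeneracy hypothesis on $(\mathbf{B},\mathbf{c},\mathcal{Q})$; this is precisely the regime covered by Barthe--Wolff \cite{BW}, and the entire point of the statement here is to go beyond that regime by trading nondegeneracy for a symmetry (evenness, later centering) assumption on the inputs. The self-convolution approach sidesteps all of this because it never needs a pointwise comparison: it converts the squared functional into a new Brascamp--Lieb functional for the even functions $F_i^{(x)}(y_i)=f_i((B_ix+y_i)/\sqrt2)f_i((B_ix-y_i)/\sqrt2)$, to which the sharp inequality can be reapplied, and the CLT then forces Gaussian extremizers in the limit.
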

{We remark that the conclusion of Theorem \ref{t:NT3} may fail to hold if one does not impose any kind of symmetric assumption. To see this, we note that Theorem \ref{t:NT3} contains the functional form of the Blaschke--Santal\'{o} inequality due to Ball \cite{BallPhd} as a special case. See also \cite{NT3} for more detailed explanations.
}

In order to prove Theorem \ref{t:NonSymGCI}, we will need to generalize Theorem \ref{t:NT3}. 
Let us take two nonnegative definite matrices $G,H$ such that $G\le H$.  
Then we say that a function $f:\mathbb{R}^n\to [0,\infty)$ is $G$-uniformly log-concave if $\frac{f}{g_G}$ is log-concave on $\R^n$. 
Similarly, we say that $f$ is $H$-semi log-convex if $\frac{f}{g_H}$ is log-convex on $\R^n$. 
Also through this note, we say that an integrable function $f$ on $\R^n$ is centered if $\int_{\R^n} |x|f\, dx <+\infty$ and $\int_{\R^n} xf\, dx=0$. Remark that any integrable log-concave function has the finite first moment, namely $\int_{\R^n} |x| f\, dx < +\infty$. 
With this terminology, for $0 \le G\le H$, we define 
\begin{align*}
&\mathcal{F}^{(o)}_{G,H}
=
\mathcal{F}^{(o)}_{G,H}(\R^n)
\\
&:= 
\{f \in L^1_+(\mathbb{R}^n): 
\text{centered}, \; 
G\text{-uniformly log-concave},\; 
H\text{-semi log-convex}
\}.
\end{align*}
We will formally consider the case of $G =0$ or $H= \infty$ as well. 
That is, $f$ being $0$-uniformly log-concave means just log-concave. 
Also, $f$ being $\infty$-semi log-convex means there is no restriction. 
Thus, for $0\le G< \infty$, 
\begin{align*}
\mathcal{F}^{(o)}_{G,\infty}
&= 
\{f \in L^1_+(\mathbb{R}^n): 
\text{centered}, \;
G\text{-uniformly log-concave}
\}, \\
\mathcal{F}^{(o)}_{0,\infty}
&= 
\{f \in L^1_+(\mathbb{R}^n): 
\text{centered}, \;
\text{log-concave,} 
\}. 
\end{align*}
By arming this regularized class, we next introduce the regularized inequality. 
Let $(\mathbf{B},\mathbf{c},\mathcal{Q})$ be an arbitrary Brascamp--Lieb datum. 
We also take $0\le G_i\le H_i \le \infty$ on $\mathbb{R}^{n_i}$ for each $i=1,\ldots,m$,  and denote by ${\bf G} = (G_i)_{i=1}^m$ and ${\bf H} = (H_i)_{i=1}^m$.  

\begin{definition}
For $0\le G_i \le H_i \le\infty$, let ${\rm I}_{{\bf G}, {\bf H}}^{(o)} ({\bf B}, {\bf c}, \mathcal{Q}) \in [0, \infty]$ be the smallest constant for which the inequality 
$$
\int_{\R^N} e^{\langle x, \mathcal{Q} x\rangle} \prod_{i=1}^m f_i(B_ix)^{c_i} \, dx 
\ge 
{\rm I}_{{\bf G}, {\bf H}}^{(o)} ({\bf B}, {\bf c}, \mathcal{Q})
\prod_{i=1}^m \left( \int_{\R^{n_i}} f_i\, dx_i \right)^{c_i}
$$ 
holds for all $f_i \in \mathcal{F}^{(o)}_{G_i,H_i}$. In other words, 
$$
{\rm I}_{{\bf G}, {\bf H}}^{(o)} ({\bf B}, {\bf c}, \mathcal{Q})
:= 
\inf_{f_i \in \mathcal{F}^{(o)}_{G_i,H_i}} {\rm BL}(\mathbf{f}).
$$
Similarly, 
$$
{\rm I}_{{\bf G}, {\bf H}}^{(\mathcal{G})} ({\bf B}, {\bf c}, \mathcal{Q})
:= 
\inf_{A_i: G_i \le A_i \le H_i} {\rm BL}(g_{A_1},\ldots, g_{A_m}).
$$
\end{definition}

Despite of such a generic definition, we will be interested only in two cases below (i) all $H_i<+\infty$ and (ii) all $H_i = \infty$. 
In the latter case, we will simply denote 
$$
{\rm I}_{{\bf G}}^{(o)} ({\bf B}, {\bf c}, \mathcal{Q})
:= 
{\rm I}_{{\bf G}, \infty}^{(o)} ({\bf B}, {\bf c}, \mathcal{Q}),
\quad 
{\rm I}_{{\bf G}}^{(\mathcal{G})} ({\bf B}, {\bf c}, \mathcal{Q})
:= 
{\rm I}_{{\bf G}, \infty}^{(\mathcal{G})} ({\bf B}, {\bf c}, \mathcal{Q}). 
$$

This type of regularization may be found in the intermediate step of the proof of Theorem \ref{t:NT3}  although it had been frequently appeared in the literature.  
For instance, the forward Brascamp--Lieb inequality with $H$-semi log-convexity has been implicitly appeared in the work of Bennett, Carbery, Christ, and Tao \cite{BCCT}, see also  works \cite{BN,CP,Val} for other settings. 
The importance of this regularized formulation was recently discovered by Milman \cite{Mil} in his simple proof of the Gaussian correlation inequality. 
Our main result in this section is the following Gaussian saturation principle: 
\begin{theorem}\label{t:CenteredIBL}
    Let $(\mathbf{B},\mathbf{c},\mathcal{Q})$ be an arbitrary Brascamp--Lieb datum and $0\le G_i < \infty$, $i=1,\ldots,m$. 
    Then it holds that 
    $$
    {\rm I}_{\mathbf{G} }^{(o)}(\mathbf{B},\mathbf{c},\mathcal{Q})
    = 
    {\rm I}_{\mathbf{G}}^{(\mathcal{G})}(\mathbf{B},\mathbf{c},\mathcal{Q}).  
    $$ 
    \end{theorem}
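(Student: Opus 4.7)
The plan is to follow the overall strategy developed for the even setting in \cite{NT3}, replacing the evenness hypothesis on the input functions by centering.

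The inequality ${\rm I}_{\bf G}^{(o)} \le {\rm I}_{\bf G}^{(\mathcal{G})}$ is immediate: for any symmetric $A_i > 0$ with $A_i \ge G_i$, the centered Gaussian $g_{A_i}$ satisfies that $g_{A_i}/g_{G_i}$ is log-concave (because $A_i - G_i \ge 0$), hence $g_{A_i} \in \mathcal{F}^{(o)}_{G_i, \infty}$, and so the infimum over $\mathcal{F}^{(o)}_{G_i, \infty}$ is dominated by the infimum over such Gaussians.

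For the reverse inequality, I would take $f_i \in \mathcal{F}^{(o)}_{G_i, \infty}$ and proceed in three steps. First, by a standard mollification (convolution with a small centered Gaussian followed by multiplication by a wide Gaussian cutoff, each adjusted so as to preserve centering and strict $G_i$-uniform log-concavity) reduce to the case when each $f_i$ is smooth, strictly log-concave, and super-Gaussianly decaying. Second, prove the regularised identity ${\rm I}_{{\bf G},{\bf H}}^{(o)}({\bf B},{\bf c},\mathcal{Q}) = {\rm I}_{{\bf G},{\bf H}}^{(\mathcal{G})}({\bf B},{\bf c},\mathcal{Q})$ for finite $H_i$ via a heat-flow monotonicity argument: run a heat flow on each input with a carefully chosen diffusion matrix, verify that $t\mapsto \log{\rm BL}({\bf u}(t))$ is monotone, and take $t\to\infty$ so that (after renormalisation) the inputs become centered Gaussians with covariances in $[G_i, H_i]$; in this regime the admissible class is compact in a suitable topology, since each $f_i$ is pinched between two explicit Gaussians. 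Third, pass to the limit $H_i\to\infty$: on the Gaussian side this is a direct limit argument for the finite-dimensional optimisation problem defining ${\rm I}^{(\mathcal{G})}_{{\bf G},{\bf H}}$, while on the functional side, given $f_i \in \mathcal{F}^{(o)}_{G_i, \infty}$ one approximates it by $f_i^{H} \in \mathcal{F}^{(o)}_{G_i, H_i}$, e.g.\ by truncating the log-concave factor $f_i/g_{G_i}$ against a wide log-concave envelope, while ensuring centering is preserved and $\int f_i^{H}\,dx \to \int f_i\,dx$.

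The hard part will be the monotonicity step, specifically verifying that the first-moment-type boundary terms produced by integration by parts along the flow vanish under centering alone. In the even case of \cite{NT3} these terms vanish individually by oddness arguments, while under mere centering only their overall sum needs to vanish, and matching the cross-coupling induced by the maps ${\bf B}$ and the twisting matrix $\mathcal{Q}$ to the centering relation on each factor $\R^{n_i}$ requires careful bookkeeping. A secondary subtlety is that the plain heat flow need not preserve $G_i$-uniform log-concavity, so one may have to work instead with a Brenier-map or Caffarelli-contraction based flow that keeps the class $\mathcal{F}^{(o)}_{G_i, H_i}$ invariant throughout, or alternatively to absorb the resulting deformation of $G_i$ into the Gaussian optimisation on the right-hand side.
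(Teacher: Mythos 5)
Your proposal identifies the correct difficulty but leaves it unresolved, and the route you sketch for the monotonicity step differs fundamentally from the paper's in a way that matters for the centered setting.

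The paper does not run a continuous heat (or transport) flow to prove the regularised identity. Instead it establishes a \emph{discrete} Ball-type inequality by self-convolution (Lemma~\ref{l:Monotone*}): writing ${\rm BL}({\bf f})^2=\int F\ast F$ and changing variables, one is reduced to applying the inverse Brascamp--Lieb inequality, for each fixed outer variable $x$, to the new inputs
$$
F_i^{(x)}(y_i) := f_i\bigl(\tfrac{B_ix+y_i}{\sqrt 2}\bigr)\, f_i\bigl(\tfrac{B_ix-y_i}{\sqrt 2}\bigr).
$$
The decisive observation is that $F_i^{(x)}$ is \emph{automatically even in $y_i$}, and hence centered, for every $x$ --- regardless of whether $f_i$ itself is only centered. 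This is precisely what kills the ``first-moment boundary terms'' you worry about: there is nothing to track, because the convolution step re-symmetrises the inputs for free. The iteration is then closed by the existence of an extremizer in $\mathcal{F}^{(o)}_{G_i,H_i}$ (Lemma~\ref{l:Extremiser}, via Arzel\`a--Ascoli and the two-sided pinching from $0<G_i\le H_i<\infty$) and the central limit theorem for the iterated self-convolutions. Your heat-flow plan doesn't have this self-symmetrisation, and the ``careful bookkeeping'' you gesture at to make the flow monotone under centering is exactly the gap; you acknowledge you don't have a resolution, and indeed none is known along those lines in this setting.

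There is a secondary mismatch in the regularisation scheme. You propose mollifying at the outset to get smooth, strictly log-concave, super-Gaussian inputs, and only at the end relaxing $H_i\to\infty$. The paper's structure is genuinely different and more delicate because centering is fragile under the natural truncations: (i) the $\beta$-Fokker--Planck flow is used \emph{only} to gain $H_i$-semi-log-convexity (Proposition~\ref{p:IBL-CompactSupp}), and then one must justify passing $t\to 0$ against the possibly unbounded weight $e^{\langle x,\mathcal{Q}x\rangle}$, which requires the compact-support pointwise bound \eqref{e:PWBound-f_i}; (ii) removing the compact-support hypothesis requires a bespoke approximation $h_R^{(\varepsilon)}(x)=h((1+\varepsilon)x+\xi_R)$ (Proposition~\ref{l:CenterApprox}) with a translate $\xi_R$ chosen to \emph{restore} centering after truncation and a uniform domination $\le (2e^n)^\varepsilon h$ obtained via Fradelizi's lemma; (iii) the passage $G_i\to 0$ requires an analogous centring-preserving approximation (Proposition~\ref{p:GaussianApprox}). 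Your phrase ``each adjusted so as to preserve centering'' waves these steps away, but they are where most of the technical work of the theorem resides: a naive recentring $h(x)\mathbf{1}_{R\mathbf{B}_2^n}(x+\xi_R)$ fails to be dominated by a fixed $L^1$-envelope without the $(1+\varepsilon)$-dilation and the $h(0)>0$ lower bound from Lemma~\ref{l:Fradelizi}. In short, the overall skeleton (regularise, prove a Gaussian saturation for the regularised class, relax the regularisation) is shared, but the key mechanism at the heart --- discrete self-convolution giving automatic evenness --- is absent from your proposal, and it is exactly what the centering hypothesis needs.
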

This may be seen as a generalization of \cite[Theorem 1.1]{Mil} as the evenness assumption on $f_i$ is now weakened to the centering assumption. See also \cite[Theorem 2.5]{NT3} for scalar matrices as the regularized parameter ${\bf G}$. 
Similarly, if one chooses $G_i = 0$ for $i=1,\ldots,m$, then this recovers Theorem \ref{t:NT3}, as well as  \cite[Theorems 2.5]{NT3}, with the weaker centering assumption on $f_i$. 

Before proceeding, we give a comment on the assumption of the log-concavity. 
Under the convention $\infty\times 0=0$, there exists an example of the Brascamp--Lieb datum for which the Gaussian saturation fails to hold even if one restricts inputs to even functions. 
\begin{example}
Let 
$
N=m=2, n_1=n_2=1, c_1=c_2=1,
$
and 
\begin{equation}\label{e:ChoiceBi}
    \mathcal{Q}=0,\quad B_1=B_2:\R^2\ni x\mapsto x_2 \in \R. 
\end{equation}
For such a datum, it holds that 
$$
\inf_{f_i: even} {\rm BL}(f_1,f_2) =0 \neq \inf_{f_i \in \mathcal{F}^{(e)}_{0,\infty}} {\rm BL}(f_1,f_2) = \inf_{A_i>0} {\rm BL}(g_{A_1},g_{A_2}) = \infty.
$$

\end{example}
To see this, note that ${\rm BL}(f_1,f_2)=\infty$ for all even and log-concave $f_i\in L^1(\R)$ since such a function is strictly positive on a neighborhood of the origin. 
On the other hand, if we take 
$$
f_1=\mathbf{1}_{[-1,1]},\quad f_2= \mathbf{1}_{[-2,-1]} + \mathbf{1}_{[1,2]} 
$$
then we see that ${\rm BL}(f_1,f_2)=0$ since $\infty\times 0=0$. 
Because of this example, although it is very artificial, one cannot expect the Gaussian saturation to hold universally in terms of the Brascamp--Lieb datum even if the inputs are restricted to even functions. 
However, we emphasize that this failure comes from just our convention that $\infty \times 0 =0$. 

\section{Proof of Theorem \ref{t:CenteredIBL}}
\subsection{Preliminaries}
The fundamental strategy of the proof of Theorem \ref{t:CenteredIBL} is parallel to the one of Theorem 1.3 in our previous work  \cite{NT3}, where all $f_i$ was supposed to be even. 
 However, there are several technical problems to be rigorously justified. 
These problems emerge because a centered log-concave function is ``less-regular" than an even log-concave function.
Consequently, we will need to give more involved and complicated arguments. 
The main tool to deal with these difficulties is the maximal bound of a centered log-concave function due to Fradelizi: 
\begin{lemma}[\cite{FraArchMath}]\label{l:Fradelizi}
For a log-concave $f \in L^1_+(\R^n)$ with $\int_{\R^n}xf\, dx=0$, 
$$
f(0) \le \| f \|_\infty \le e^n f(0). 
$$
\end{lemma}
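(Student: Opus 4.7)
The inequality $f(0)\le \|f\|_\infty$ is immediate from the definition of the essential supremum. The substance of the lemma is the reverse bound, which is the classical result of Fradelizi \cite{FraArchMath}; I describe the approach.

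\textbf{One-dimensional case.} By dilation, normalise $f(0)=1$, denote $M:=\|f\|_\infty$, and let $a\in\R$ be a point where $f(a)$ is close to $M$. By reflecting $f$ about the origin, which preserves both the centering condition and the value $f(0)$, we may assume $a>0$. Concavity of $\log f$ then produces the two-sided chord bound
\[
f(x)\ge M^{x/a}\quad\text{on }[0,a],\qquad f(x)\le M^{x/a}\quad\text{on }(-\infty,0],
\]
the first as a convex combination of the endpoints $(0,0)$ and $(a,\log M)$ of the graph of $\log f$, the second because the right-derivative of $\log f$ at $0$ is at least the slope of that chord, so the chord dominates $\log f$ when extrapolated to the left. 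Combined with $\int_a^\infty xf\,dx\ge 0$, the centering condition $\int_\R xf\,dx=0$ forces
\[
\int_{-\infty}^{a} x\,M^{x/a}\,dx\le 0.
\]
Setting $u=x/a$ and $c=\log M$, the left-hand side evaluates to $a^2 e^{c}(c-1)/c^2$, so $c\le 1$ and $M\le e\cdot f(0)$.

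\textbf{Higher dimensions.} For general $n$ I would iterate the one-dimensional bound via marginalisation. For any unit vector $v$ the marginal
\[
\psi_v(t):=\int_{v^\perp} f(tv+y)\,dy
\]
is log-concave on $\R$ by Pr\'ekopa--Leindler, and its centroid is $\int t\psi_v(t)\,dt=\langle\int xf\,dx,v\rangle=0$, so the one-dimensional bound gives $\psi_v(0)\ge e^{-1}\|\psi_v\|_\infty$. Choosing $v$ in the direction of a near-maximiser of $f$ and then iterating the same device on the $(n-1)$-dimensional density obtained on $v^\perp$ peels off one factor of $e$ per dimension, producing the desired bound $f(0)\ge e^{-n}\|f\|_\infty$.

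\textbf{Main obstacle.} The delicate point is the induction on dimension: conditioning a centred log-concave density on a hyperplane does \emph{not} in general yield a centred density on that hyperplane, so a naive induction by restriction breaks down. Fradelizi's argument circumvents this by phrasing the inductive step in terms of marginal integrals, which do inherit centering from the linearity of the underlying moment conditions, rather than pointwise restrictions; this is precisely what keeps the exponential factor as small as $e^n$ at each stage.
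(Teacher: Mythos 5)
The paper states this lemma as a citation to Fradelizi \cite{FraArchMath} and does not include a proof of it, so there is no internal argument to compare against. Your one-dimensional argument is correct and complete: since $\log f$ is concave, the chord through $(0,\log f(0))$ and $(a,\log f(a))$ lies below $\log f$ on $[0,a]$ and above it on $(-\infty,0]$, so $xf(x)\ge x\,f(a)^{x/a}$ for all $x\le a$; combining this with $\int_a^\infty x f\,dx\ge 0$ and $\int_\R xf\,dx=0$ forces $\int_{-\infty}^a x\,f(a)^{x/a}\,dx\le 0$, which upon computation gives $\log f(a)\le 1$, and letting $f(a)\to\|f\|_\infty$ finishes. (A small notational slip: the chord bounds involve $f(a)$, not $M=\|f\|_\infty$, in the exponent, but this disappears in the limit.)

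The multidimensional step is a genuine gap, and I think you sense this, since you flag the obstruction yourself. The one-dimensional marginal $\psi_v(t)=\int_{v^\perp}f(tv+y)\,dy$ is indeed centred and log-concave, so $\psi_v(0)\ge e^{-1}\|\psi_v\|_\infty$; but this controls a ratio of $(n-1)$-fold integrals, not the pointwise ratio $f(0)/\|f\|_\infty$, and the proposal gives no mechanism to pass from one to the other. The two natural $(n-1)$-dimensional objects one might iterate on each fail: the slices $y\mapsto f(tv+y)$ are pointwise comparable to $f$ but are not centred (exactly the problem you raised), while the $(n-1)$-dimensional marginal $\int_\R f(tv+y)\,dt$ is centred but is again an integral, not pointwise comparable to $f$. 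As written, "iterating the same device on the $(n-1)$-dimensional density obtained on $v^\perp$" does not resolve into a valid induction, and the final paragraph is a gesture at Fradelizi's argument rather than a reproduction of it. To complete the proof one needs a genuinely different reduction to one dimension (Fradelizi's argument compares $f$ along the segment from maximiser to centroid against the extremal one-sided exponential profile in a sharper way, rather than marginalising), and that missing step is where the exponent $n$ actually gets assembled.
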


By arming this lemma, we may establish the  pointwise bound of a centered log-concave function.

\begin{lemma}\label{l:Convexity}
Let $G, H \in \R^{n \times n}$ be symmetric matrices with $0< G \le H$ and $f=e^{-\varphi} \in \mathcal{F}_{G, H}^{(o)}(\R^n)$ with $\int_{\R^n} f\, dx=1$. 
Let also $\lambda>0$ be the smallest eigenvalue of $G$ and $\Lambda>0$ be the largest eigenvalue of $H$. 
Then the followings hold true: 
\begin{itemize}
\item[(1)] 
$$
\frac n2 \log \frac{\pi}{\Lambda} - n 
\le \varphi(0) \le \frac n2 \log \frac{4\pi}{\lambda} + 2n. 
$$

\item[(2)] 
For $x\in \R^n$, 
$$
\frac{\lambda}4 |x|^2 + \varphi(0) - 2n \le \varphi(x) \le \Lambda|x|^2 + \varphi(0) + n. 
$$
In particular, 
$$
\frac {\lambda}{4} |x|^2 + \frac n2 \log \frac{\pi}{\Lambda} - 3n \le \varphi(x) \le \Lambda |x|^2 + \frac n2 \log \frac{4\pi}{\lambda} + 3n. 
$$

\item[(3)] Fix $r>0$. Then there exists some $C_{n,r,\lambda, \Lambda}>0$ such that for any $x, y \in [-r, r]^n$, it holds that 
$$
|\varphi(x) - \varphi(y)| \le C_{n,r,\lambda, \Lambda} |x-y|. 
$$

\item[(4)] Fix $r>0$. Then there exists some $C_{n,r,\lambda, \Lambda}>0$ such that it holds that 
$$
\sup_{x \in [-r, r]^n} | \varphi(x)| \le C_{n,r,\lambda, \Lambda}. 
$$
\end{itemize}
\end{lemma}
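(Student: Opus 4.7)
The plan is to prove all four statements from a common toolbox: Fradelizi's maximal bound (Lemma \ref{l:Fradelizi}) combined with the tangent-type inequalities supplied by $G$-uniform log-concavity and $H$-semi log-convexity of $f$, together with the normalisation $\int f\,dx = 1$. Observe first that these two one-sided conditions force $\varphi \in C^{1,1}$ with Hessian sandwiched (in semidefinite order) between $G$ and $H$; in particular the mode $x^*$ of $f$ exists, is unique, and satisfies $\nabla\varphi(x^*)=0$. Consequently one has the pointwise expansions
\begin{equation*}
\varphi(x^*) + \frac{\lambda}{2}|x-x^*|^2 \le \varphi(x) \le \varphi(x^*) + \frac{\Lambda}{2}|x-x^*|^2 \qquad (x \in \R^n),
\end{equation*}
which will be used throughout.

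For (1), integrating $e^{-\varphi}$ against $\int f = 1$ by means of the two displayed expansions yields
\begin{equation*}
(\lambda/(2\pi))^{n/2} \le f(x^*) \le (\Lambda/(2\pi))^{n/2}.
\end{equation*}
Fradelizi's lemma, which crucially invokes the centering hypothesis, gives $f(0) \le f(x^*) \le e^{n} f(0)$; composing the two chains produces the bounds in (1). For (2), the mode is forced to be close to the origin: comparing Fradelizi's $\varphi(0) - \varphi(x^*) \le n$ with the lower expansion evaluated at $x = 0$, namely $\varphi(0) - \varphi(x^*) \ge \frac{\lambda}{2}|x^*|^2$, gives the key bound $|x^*|^2 \le \frac{2n}{\lambda}$. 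The lower estimate in (2) now follows from $|x - x^*|^2 \ge \frac{1}{2}|x|^2 - |x^*|^2$ and $\varphi(x^*) \ge \varphi(0) - n$:
\begin{equation*}
\varphi(x) \ge \varphi(x^*) + \frac{\lambda}{4}|x|^2 - \frac{\lambda}{2}|x^*|^2 \ge \varphi(0) + \frac{\lambda}{4}|x|^2 - 2n,
\end{equation*}
and the upper estimate is handled symmetrically via the upper expansion together with $|x - x^*|^2 \le 2|x|^2 + 2|x^*|^2$. The `in particular' statement is then just a substitution of (1).

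For (3) and (4), I exploit that $\nabla\varphi$ is globally $\Lambda$-Lipschitz (since $\nabla^2\varphi \le H \le \Lambda I$) and that $\nabla\varphi(x^*) = 0$; hence on $[-r,r]^n$,
\begin{equation*}
|\nabla\varphi(x)| \le \Lambda\bigl(|x| + |x^*|\bigr) \le \Lambda\bigl(r\sqrt{n} + \sqrt{2n/\lambda}\bigr),
\end{equation*}
which gives (3) by the mean-value theorem. Part (4) is immediate from the quadratic sandwich of (2) combined with the bound on $|\varphi(0)|$ from (1), using $|x|^2 \le n r^2$ on $[-r,r]^n$.

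The main obstacle is the careful bookkeeping of constants in (2): the cross term $\langle x, x^*\rangle$ hidden inside $|x-x^*|^2$ must be tamed by AM-GM with the right split in order to match the precise constants stated in the lemma (rather than constants depending also on the conditioning $\Lambda/\lambda$). Once (2) is established with its claimed constants, (3) and (4) follow essentially for free, and (1) is a routine combination of Fradelizi's bound with the Gaussian tail estimates supplied by the expansions.
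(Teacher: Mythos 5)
Your route through the unique mode $x^*$ (justified by the $C^{1,1}$-regularity of $\varphi$, which does follow from the combination of $G$-uniform log-concavity and $H$-semi log-convexity) is a legitimate and genuinely different strategy from the paper's. It works cleanly for part (1) — in fact you obtain the slightly sharper bound $\frac{n}{2}\log\frac{2\pi}{\Lambda}\le\varphi(0)\le\frac{n}{2}\log\frac{2\pi}{\lambda}+n$ — and for parts (3) and (4), where only the $(n,r,\lambda,\Lambda)$-dependence matters.

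However, there is a genuine gap in the upper bound of part (2), and the obstacle you flag at the end is not a bookkeeping issue that can be fixed by a cleverer AM--GM split: it is unavoidable with the mode-centered expansion. To keep the coefficient of $|x|^2$ equal to $\Lambda$, the split $|x-x^*|^2\le(1+\epsilon)|x|^2+(1+\epsilon^{-1})|x^*|^2$ forces $\epsilon=1$, and then the additive error is $\Lambda|x^*|^2$, which you can only control via $|x^*|^2\le 2n/\lambda$, producing $2n\Lambda/\lambda$ rather than the claimed $n$. Since $G\le H$ implies $\lambda\le\Lambda$ but nothing bounds $\Lambda/\lambda$, you cannot reach the stated constant. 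The paper avoids this entirely by applying midpoint semi-log-convexity centered at the origin rather than at the mode: since $\frac12\langle Hx,x\rangle-\varphi$ is convex, $\varphi(0)\ge\frac12\varphi(x)+\frac12\varphi(-x)-\frac{\Lambda}{2}|x|^2$, and Fradelizi gives $\varphi(-x)\ge\min_y\varphi(y)\ge\varphi(0)-n$, which combine to $\varphi(x)\le\varphi(0)+n+\Lambda|x|^2$ with no conditioning factor. The lower bound in (2) the paper obtains analogously by applying $\lambda$-uniform log-concavity on the segment from $0$ to $x$ together with the same Fradelizi bound, then taking $t=\tfrac12$, which matches your constant. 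You should replace the $x^*$-centered derivation of the upper bound in (2) by this origin-centered midpoint argument; once that is done, the rest of your proof is sound.
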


\begin{proof}
The proof is almost parallel to the one of \cite[Lemma 2.2]{NT3}, so we give a proof of the statement (1) only. 
Note that $f$ is $\lambda {\rm id}_n$-uniformly log-concave and $\Lambda {\rm id}_n$-semi log-convex. 
Since $f$ is $\Lambda {\rm id}_n$-semi log-convex, it follows that 
$$
\varphi(0) = \varphi( \frac x2 + \frac{-x}{2}) \ge \frac12 \varphi(x) + \frac12 \varphi(-x) - \frac{\Lambda}{8} |x - (-x)|^2
=
\frac12 \varphi(x) + \frac12 \varphi(-x) - \frac{\Lambda}{2} |x|^2
$$
for $x\in \R^n$. 
Lemma \ref{l:Fradelizi} implies that 
$$
\varphi(-x) \ge \min_{y \in \R^n} \varphi(y) \ge \varphi(0) -n, 
$$
from which we have 
\begin{equation}\label{e:LowerBoundPhi}
\varphi(x) \le \Lambda |x|^2 + \varphi(0) + n.  
\end{equation}

On the other hand, since $f$ is $\lambda {\rm id}_n$-uniformly log-concave,  it holds that for any $x \in \R^n$ and $t \in (0,1)$, 
$$
\min_{y \in \R^n} \varphi(y) \le \varphi(t x) \le (1-t) \varphi(0) + t \varphi(x) - \frac {\lambda}{2}t(1-t) |x|^2. 
$$ 
It follows from Lemma \ref{l:Fradelizi} again that we have 
$$
\varphi(0) - n \le (1-t) \varphi(0) + t \varphi(x) - \frac {\lambda}{2}t(1-t) |x|^2, 
$$
which yields that, putting $t= \frac12$, 
\begin{equation}\label{e:UpperBoundPhi}
\varphi(x) \ge \frac {\lambda}{4} |x|^2 + \varphi(0) - 2n. 
\end{equation}
Combining \eqref{e:LowerBoundPhi} and \eqref{e:UpperBoundPhi}, we obtain 
$$
e^{- \Lambda |x|^2 - \varphi(0) - n} \le f(x) \le e^{- \frac \lambda4 |x|^2 - \varphi(0) + 2n}. 
$$
Applying $\int_{\R^n}f\, dx=1$, we conclude that  
$$
\left( \frac{\pi}{\Lambda} \right)^{\frac n2} e^{ - \varphi(0) - n}
\le
1
\le
\left( \frac{4\pi}{\lambda} \right)^{\frac n2} e^{ - \varphi(0) + 2n}, 
$$
which yields the desired assertion. 
\end{proof}

Before proving Theorem \ref{t:CenteredIBL}, we first need to identify the Brascamp--Lieb datum $(\mathbf{B},\mathbf{c},\mathcal{Q})$ for which ${\rm I}_{0,\infty}^{(o)}(\mathbf{B},\mathbf{c},\mathcal{Q})$ becomes trivial; either $0$ or $\infty$. 
For this purpose, we decompose $\mathcal{Q}$ into its positive and negative parts. 
That is, we first diagonalize $\mathcal{Q}$ by $ U \mathcal{Q} U^* = {\rm diag}\, (q_1,\ldots, q_N)$ for some appropriate $U \in O(N)$, where we arrange so that 
$$
q_1,\ldots, q_{N_+} > 0,\quad 
q_{N_++1},\ldots, q_{N_--1} = 0,\quad 
0 > q_{N_-},\ldots, q_N 
$$
for some $0\le N_+ < N_- \le N$.  
By denoting $u_j:= U^*e_j \in \mathbb{S}^{N-1}$, we obtain a decomposition 
$
\mathcal{Q}= \mathcal{Q}_+ - \mathcal{Q}_-,
$
where 
$$
\mathcal{Q}_+ := \sum_{j=1}^{N_+} q_j u_j \otimes u_j,
\quad 
\mathcal{Q}_- := \sum_{j= N_-}^N |q_j| u_j \otimes u_j. 
$$
If we denote 
$$
E_+:= \langle u_1,\ldots, u_{N_+}\rangle,\; E_0:= \langle u_{N_++1},\ldots, u_{N_--1} \rangle,\; E_-:= \langle u_{N_-},\ldots, u_N\rangle 
$$ 
then the whole space may be orthogonally decomposed into 
\begin{equation}\label{e:QuadDecomp}
\R^N = E_+ \oplus E_0 \oplus E_-,\quad 
\mathcal{Q}_{\pm} >0\quad {\rm on}\quad E_{\pm}. 
\end{equation}
With this notation, the first condition that excludes the trivial case may be written as 
\begin{equation}\label{e:FiniteCond}
    \mathcal{Q}_{\bigcap_{i=1}^m{\rm Ker}\, B_i} <0. 
\end{equation}
Here, $\mathcal{Q}_V<0$ means that 
$$
\langle x, \mathcal{Q}x\rangle <0,\quad \forall x \in V\setminus\{0\}.
$$
In particular, we understand that $\mathcal{Q}|_{\{0\}}<0$ trivially holds. 
We note that this condition the same as one of nondegeneracy condition of Barthe and Wolff \cite{BW}.

\begin{lemma}\label{l:Const=Infty}
    If the Brascamp--Lieb datum $(\mathbf{B},\mathbf{c},\mathcal{Q})$ does not satisfy \eqref{e:FiniteCond}, then 
    $$
    {\rm I}^{(\mathcal{G})}_{0,\infty}(\mathbf{B},\mathbf{c},\mathcal{Q}) 
    = 
    {\rm I}^{(o)}_{0,\infty}(\mathbf{B},\mathbf{c},\mathcal{Q}) 
    =
    +\infty. 
    $$
\end{lemma}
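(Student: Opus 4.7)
The plan is to show that when \eqref{e:FiniteCond} fails, the numerator $I(\mathbf{f}) := \int_{\R^N} e^{\langle x, \mathcal{Q} x\rangle} \prod_i f_i(B_i x)^{c_i}\, dx$ equals $+\infty$ for every admissible input $\mathbf{f}$. Since the denominator $\prod_i (\int f_i)^{c_i}$ is finite and positive, this will force ${\rm BL}(\mathbf{f}) = +\infty$, and taking the infimum over either centered Gaussians or $\mathcal{F}^{(o)}_{0,\infty}$ will handle both assertions at once.

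The first step is to pick a nonzero $x_0 \in \bigcap_i \ker B_i$ with $\alpha := \langle x_0, \mathcal{Q} x_0\rangle \ge 0$, provided by the failure of \eqref{e:FiniteCond}. Decomposing $\R^N = \R x_0 \oplus x_0^\perp$ and writing $x = y + t x_0$ with $y \in x_0^\perp$, the identity $B_i x_0 = 0$ makes $\prod_i f_i(B_i x)^{c_i}$ independent of $t$, and a direct expansion yields
$$e^{\langle x, \mathcal{Q} x\rangle} \prod_i f_i(B_i x)^{c_i} = F(y)\, e^{\alpha t^2 + 2 \beta(y) t}, \qquad \beta(y) := \langle x_0, \mathcal{Q} y\rangle,$$
with $F(y) := e^{\langle y, \mathcal{Q} y\rangle} \prod_i f_i(B_i y)^{c_i}$. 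The crude bound $\int_\R e^{\alpha t^2 + 2\beta t}\, dt = +\infty$ for every $\alpha \ge 0$ and $\beta \in \R$ (Gaussian tail if $\alpha > 0$, one-sided divergence if $\alpha = 0 \neq \beta$, constant integrand if $\alpha = \beta = 0$) then reduces everything, via Tonelli, to exhibiting a set of positive Lebesgue measure in $x_0^\perp$ on which $F$ is strictly positive.

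For Gaussian inputs this is automatic since $F > 0$ everywhere. For $f_i \in \mathcal{F}^{(o)}_{0,\infty}$, I would invoke Lemma \ref{l:Fradelizi} to get $f_i(0) > 0$, and then use the centering condition $\int x\, f_i\, dx = 0$ to argue that $0$ lies in the interior of $\mathrm{supp}(f_i)$: otherwise a supporting hyperplane through $0$ would confine $\mathrm{supp}(f_i)$ to a closed half-space, and integrating the inward normal coordinate against $f_i$ would return a strictly positive number, contradicting centeredness. Consequently each $f_i$ is continuous and strictly positive on a neighborhood of $0 \in \R^{n_i}$, whence $F$ is continuous and positive on a Euclidean ball around $0 \in x_0^\perp$, which is the required positivity set.

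The main obstacle I anticipate is this last positivity claim for centered log-concave inputs: it is not a priori obvious that the centering assumption forces $0$ into the interior of each support, and the argument must combine Fradelizi's maximal estimate with a supporting-hyperplane contradiction rather than a soft compactness argument. The remaining ingredients---the decomposition along $x_0$ and the trivial divergence of $\int_\R e^{\alpha t^2 + 2\beta t}\, dt$---are routine once this positivity is secured.
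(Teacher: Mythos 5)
Your proposal is correct and follows essentially the same route as the paper: both decompose $\R^N$ along a kernel direction in $\bigcap_i\ker B_i$ on which $\mathcal{Q}$ is not negative definite, observe that the resulting fiber integral of $e^{\langle x,\mathcal{Q}x\rangle}$ diverges, and then use Fradelizi's lemma together with the Hahn--Banach supporting-hyperplane argument (forcing $0 \in \mathrm{int}(\mathrm{supp}\,f_i)$ from centeredness) to get strict positivity of $\prod_i f_i(B_i\cdot)^{c_i}$ on a neighborhood of the origin, so that the convention $\infty\times 0 = 0$ cannot rescue finiteness. The only cosmetic difference is that you integrate over the one-dimensional fiber $\R x_0$ whereas the paper fibers over the full subspace $V = \bigcap_i\ker B_i$; both lead to the same conclusion.
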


\begin{proof}
    Clearly $ {\rm I}^{(\mathcal{G})}_{0,\infty}(\mathbf{B},\mathbf{c},\mathcal{Q}) 
    \ge 
    {\rm I}^{(o)}_{0,\infty}(\mathbf{B},\mathbf{c},\mathcal{Q})  $ from the definition. 
    It thus suffices to show that 
    $$
    {\rm BL}(\mathbf{f})=+\infty,\quad \forall f_i \in \mathcal{F}^{(o)}_{0,\infty}(\R^{n_i}). 
    $$
    We know from the assumption that $V:=\bigcap_{i=1}^m {\rm ker}\, B_i \subset \R^N$ is non-trivial i.e. ${\rm dim}\, V \ge 1$; otherwise $\mathcal{Q}_V = \mathcal{Q}_{\{0\}} <0$ holds in the trivial sense. 
    With this in mind, we decompose $x = x'+x''$ where $x'\in V$ and $x'' \in V^\perp$ for which we have $B_i x = B_i x''$, and thus 
\begin{align*}
    \int_{\R^N} 
    e^{\langle x, \mathcal{Q}x\rangle} \prod_{i=1}^m f_i(B_ix)^{c_i}\, dx 
    &= 
    \int_{V^\perp}  \big( \int_{V} e^{\langle x, \mathcal{Q}x\rangle}\, dx' \big) \prod_{i=1}^m f_i(B_i x'')^{c_i} \, dx'', 
\end{align*}
where we use the convention $\infty \times 0 =0$. 
    Since we are supposing that $\mathcal{Q}|_V \not< 0$, there exists a direction $\omega_0 \in V\setminus\{0\}$ such that $\langle \omega_0, \mathcal{Q}\omega_0\rangle\ge0$. Hence, it follows that 
    $
    \int_{V} e^{\langle x, \mathcal{Q}x\rangle}\, dx' = + \infty. 
    $
    This gives 
    $$
    {\rm BL}(\mathbf{f})
    = 
    \int_{\R^N} 
    e^{\langle x, \mathcal{Q}x\rangle} \prod_{i=1}^m f_i(B_ix)^{c_i}\, dx = + \infty
    $$ unless $\prod_{i=1}^m f_i(B_i x'')^{c_i} = 0$ a.e. $x''\in V^\perp$. 
    The log-concavity of $f_i$ ensures that the latter scenario indeed does not occur. 
    To see this, notice from $f_i \in \mathcal{F}^{(o)}_{0,\infty}(\R^{n_i})$ and  Lemma \ref{l:Fradelizi} that, for $x'' =0$, 
    $\prod_{i=1}^m f_i(0)^{c_i} > 0$. 
    Moreover, we may see from $f_i\in \mathcal{F}^{(o)}_{0,\infty}(\R^{n_i})$ that 
    there exists a small neighborhood around 0, denoted by $U_i = U_i(f_i) \subset \R^{n_i}$ such that 
    \begin{equation}\label{e:Continuity-f_i}
        \inf_{x_i \in U_i} f_i(x_i) \ge \frac12 f_i(0). 
    \end{equation} 
    Once we could see this, we would complete the proof. 
    To ensure the existence of such a set, we may use Lemma \ref{l:Fradelizi}, $0\in {\rm int}\big( {\rm supp}\, h \big)$, and $h$ is continuous on ${\rm int}\big( {\rm supp}\, h \big)$. 
    Among them the second claim that $0\in {\rm int}\big( {\rm supp}\, f_i \big)$ may not be trivial, although this could be well-known.  We thus give a short proof of it. 
    Suppose contradictory that $0\notin {\rm int}\big( {\rm supp}\, f_i \big)$. 
    From the log-concavity of $f_i$, ${\rm int}\big( {\rm supp}\, f_i \big)$ is open and convex. 
    Moreover, ${\rm int}\big( {\rm supp}\, f_i \big)\neq \emptyset $; otherwise $\int_{\R^{n_i}} f_i = 0$. 
    Thus Hahn--Banach's separation theorem ensures the existence of a hyperplane separating ${\rm int}\big( {\rm supp}\, f_i \big)$ and $\{0\}$. 
    More precisely, there exist $u \in \mathbb{S}^{n-1}$ and $t\in \R$ such that 
    $$
    \langle u,x \rangle < t\le 0,\quad \forall x\in {\rm int}\big( {\rm supp}\, f_i \big).
    $$
    In particular, $\langle u,x\rangle <0$ and hence 
    \begin{align*}
        \big\langle u, \int_{\R^n} x f_i(x)\, dx \big\rangle 
        = 
        \int_{\R^n} \langle u,x\rangle f_i(x)\, dx <0
    \end{align*}
    since $h$ is a nonzero function. However, this contradicts with that $h$ is centered. 
\end{proof}

\begin{remark}
    In the above argument, we used the log-concavity assumption in a crucial way. 
    One may wonder whether the log-concavity is essential here or not. Namely, is the condition \eqref{e:FiniteCond} still necessary for 
    $$
    {\rm inf}_{f_i \in L^1_+(\R^{n_i}):\; \int x_if_i\, dx_i =0} {\rm BL}(\mathbf{f}) 
    <+\infty? 
    $$
    For this question, the answer is in general negative as Example 2.5 shows. 
\end{remark}

We next confirm the reverse implication. 
\begin{lemma}\label{l:Const-Finite}
    Let $(\mathbf{B},\mathbf{c},\mathcal{Q})$ be the Brascamp--Lieb datum satisfying  \eqref{e:FiniteCond}. 
    Then there exists $\Lambda_0 = \Lambda_0(\mathbf{B},\mathbf{c},\mathcal{Q})\gg1$ such that 
    \begin{equation}\label{e:FiniteGauss}
    \int_{\R^N} e^{\langle x, \mathcal{Q}x\rangle} \prod_{i=1}^m g_{\Lambda_0 {\rm id}_{n_i}} (B_ix)^{c_i}\, dx <+\infty.  
    \end{equation} 
    In particular, 
    $$
    {\rm I}^{(o)}_{0,\infty}(\mathbf{B},\mathbf{c},\mathcal{Q})
    \le 
    {\rm I}^{(\mathcal{G})}_{0,\infty}(\mathbf{B},\mathbf{c},\mathcal{Q})
    <+\infty. 
    $$
\end{lemma}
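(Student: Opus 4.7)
The plan is to reduce \eqref{e:FiniteGauss} to showing that the quadratic form
\begin{equation*}
Q_{\Lambda_0}(x) := \langle x, \mathcal{Q}x\rangle - \tfrac{\Lambda_0}{2} \sum_{i=1}^m c_i |B_i x|^2 = \langle x, (\mathcal{Q} - \tfrac{\Lambda_0}{2} S)x\rangle
\end{equation*}
is negative definite on $\R^N$ for some sufficiently large $\Lambda_0$, where $S := \sum_{i=1}^m c_i B_i^* B_i$. Indeed, once $Q_{\Lambda_0}(x) \le -c|x|^2$ for some $c>0$, the integral in \eqref{e:FiniteGauss} reduces to a standard negative-definite Gaussian integral and is finite.

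The key linear-algebraic observation is that $S \ge 0$ with
\begin{equation*}
\ker S \;=\; \bigl\{x \in \R^N : B_i x = 0 \text{ for all } i\bigr\} \;=\; \bigcap_{i=1}^m \ker B_i \;=: V.
\end{equation*}
Since $S$ is symmetric, its range equals $V^\perp$, so both $V$ and $V^\perp$ are $S$-invariant, with $S|_{V^\perp} \ge s_{\min}\,\mathrm{id}_{V^\perp}$ for some $s_{\min}>0$ (the smallest nonzero eigenvalue of $S$). I would then write any $x \in \R^N$ as $x = v + w$ with $v\in V$, $w\in V^\perp$, and expand
\begin{equation*}
\langle x, \mathcal{Q}x\rangle = \langle v, \mathcal{Q}v\rangle + 2\langle v, \mathcal{Q}w\rangle + \langle w, \mathcal{Q}w\rangle,
\qquad \langle x, Sx\rangle = \langle w, Sw\rangle.
\end{equation*}
By the hypothesis \eqref{e:FiniteCond}, $\mathcal{Q}|_V < 0$, so there exists $c_V > 0$ with $\langle v, \mathcal{Q}v\rangle \le -c_V |v|^2$ for all $v\in V$. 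The cross term is controlled by Cauchy--Schwarz together with Young's inequality:
\begin{equation*}
2|\langle v, \mathcal{Q}w\rangle| \;\le\; \tfrac{c_V}{2}|v|^2 + \tfrac{2\|\mathcal{Q}\|^2}{c_V}|w|^2.
\end{equation*}
Combining these with $|\langle w,\mathcal{Q}w\rangle|\le \|\mathcal{Q}\||w|^2$ yields
\begin{equation*}
\langle x, \mathcal{Q}x\rangle \;\le\; -\tfrac{c_V}{2}|v|^2 + C_1 |w|^2
\end{equation*}
for some constant $C_1=C_1(\mathcal{Q},c_V)$. Subtracting $\tfrac{\Lambda_0}{2}\langle w, Sw\rangle \ge \tfrac{\Lambda_0 s_{\min}}{2}|w|^2$ and choosing $\Lambda_0$ so large that $C_1 - \tfrac{\Lambda_0 s_{\min}}{2} \le -\tfrac{c_V}{2}$ gives $Q_{\Lambda_0}(x) \le -\tfrac{c_V}{2}|x|^2$ for all $x$, which is the required negative definiteness.

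For the second part, observe that the denominator in ${\rm BL}(g_{\Lambda_0\mathrm{id}_{n_1}},\ldots,g_{\Lambda_0\mathrm{id}_{n_m}})$ is a product of ordinary Gaussian integrals, hence finite and strictly positive, so \eqref{e:FiniteGauss} gives ${\rm I}^{(\mathcal{G})}_{0,\infty}(\mathbf{B},\mathbf{c},\mathcal{Q}) < +\infty$. Finally, each centered Gaussian $g_{A_i}$ is even and log-concave and so lies in $\mathcal{F}^{(o)}_{0,\infty}(\R^{n_i})$; taking infima over the smaller Gaussian class can only increase the value, so ${\rm I}^{(o)}_{0,\infty} \le {\rm I}^{(\mathcal{G})}_{0,\infty}$. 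There is no substantial obstacle here; the only care needed is in book-keeping the splitting along $V \oplus V^\perp$ to handle the indefinite form $\mathcal{Q}$, which is a purely finite-dimensional perturbation argument.
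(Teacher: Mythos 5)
Your proof is correct, and for the central step — showing that $\mathcal{Q}-\tfrac{\Lambda_0}{2}\sum_i c_i B_i^*B_i$ is negative definite for large $\Lambda_0$ — you take a genuinely different route than the paper. The paper argues by contradiction: it supposes the Gaussian integral diverges for every $\Lambda$, extracts unit vectors $\omega_\Lambda$ witnessing the failure of definiteness, passes to a subsequential limit $\omega_\infty$ by compactness of $\mathbb{S}^{N-1}$, and derives a contradiction by placing $\omega_\infty$ simultaneously in $\bigcap_i\ker B_i$ and in $E_+\cap E_-$ (using the decomposition $\mathcal{Q}=\mathcal{Q}_+-\mathcal{Q}_-$). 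You instead give a direct, constructive argument: split $\R^N=V\oplus V^\perp$ with $V=\bigcap_i\ker B_i=\ker S$, use the hypothesis $\mathcal{Q}|_V<0$ to get a uniform negative bound $-c_V$ on $V$, absorb the cross term $2\langle v,\mathcal{Q}w\rangle$ by Young's inequality, and beat the resulting $C_1|w|^2$ by the coercivity $S|_{V^\perp}\ge s_{\min}\,\mathrm{id}$. The two approaches prove the same linear-algebraic fact; yours has the minor virtue of being quantitative (one can in principle read off an explicit $\Lambda_0$ in terms of $c_V$, $s_{\min}$, $\|\mathcal{Q}\|$), while the paper's compactness argument is shorter to state but nonconstructive. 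Your handling of the ``in particular'' clause matches the paper's: the denominator in $\mathrm{BL}(g_{\Lambda_0\mathrm{id}},\ldots)$ is a finite positive Gaussian integral so $\mathrm{I}^{(\mathcal{G})}_{0,\infty}<\infty$, and each centered Gaussian lies in $\mathcal{F}^{(o)}_{0,\infty}$ so the infimum over the larger class is no greater.
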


\begin{proof}
    Suppose contradictory that 
    $$
    \forall \Lambda\gg1,\quad 
    \int_{\R^N} e^{\langle x, \mathcal{Q}x\rangle} \prod_{i=1}^m g_{\Lambda {\rm id}_{n_i}} (B_ix)^{c_i}\, dx = +\infty.  
    $$
    Then $ \Lambda \sum_{i=1}^m c_i B_i^* B_i \not> 2\mathcal{Q} $, that is, 
    $$
    \exists \omega_\Lambda \in \mathbb{S}^{N-1}:\; 
    \Lambda \sum_{i=1}^m c_i |B_i \omega_\Lambda|^2 \le 2 \langle \omega_\Lambda, \mathcal{Q} \omega_{\Lambda} \rangle. 
    $$
    By $\mathcal{Q} = \mathcal{Q}_+-\mathcal{Q}_-$, this may be read as 
    \begin{equation}\label{e:Ineq3/9-1}
    \sum_{i=1}^m c_i |B_i \omega_\Lambda|^2 + \frac1{\Lambda} \langle \omega_\Lambda, \mathcal{Q}_- \omega_\Lambda\rangle 
    \le 
     \frac1{\Lambda} \langle \omega_\Lambda, \mathcal{Q}_+ \omega_\Lambda\rangle. 
    \end{equation}
    We will take a limit $\Lambda\to\infty$. For this purpose, we note that there exists $\omega_\infty\in\mathbb{S}^{N-1}$ such that 
    $$
    \lim_{\Lambda\to\infty} 
    |\omega_\infty - \omega_\Lambda|
    =0, 
    $$
    because of the compactness of $\mathbb{S}^{N-1}$ (after passing to the subsequence if necessary). 
    On the one hand, \eqref{e:Ineq3/9-1} yields that 
    $$
    \sum_{i=1}^m c_i |B_i\omega_\Lambda|^2 \le \frac1{\Lambda} \lambda_{\rm max}(\mathcal{Q}_+),
    $$
    and so after $\Lambda\to \infty$, we see that $\omega_\infty \in \bigcap_{i=1}^m {\rm ker}\, B_i $. 
    Since $\mathcal{Q}_->0$ on $\bigcap_{i=1}^m {\rm ker}\, (B_i)$ from \eqref{e:FiniteCond}, this yields that $\langle \omega_\infty, \mathcal{Q}_- \omega_\infty\rangle >0$. 
    On the other hand, \eqref{e:Ineq3/9-1} also gives that $\langle \omega_\Lambda, \mathcal{Q}_- \omega_\Lambda\rangle 
    \le 
    \langle \omega_\Lambda, \mathcal{Q}_+ \omega_\Lambda\rangle$, and so after $\Lambda\to\infty$, 
    $$
    \langle \omega_\infty, \mathcal{Q}_- \omega_\infty\rangle 
    \le 
    \langle \omega_\infty, \mathcal{Q}_+ \omega_\infty\rangle. 
    $$
    By combining this and $\langle \omega_\infty, \mathcal{Q}_- \omega_\infty\rangle >0$, it follows that $\langle \omega_\infty, \mathcal{Q}_+ \omega_\infty\rangle >0$ which means that $\omega_\infty \in E_- \cap E_+$. This is a contradiction since $\omega_\infty \neq0$.  
\end{proof}

The next condition to exclude the trivial case is the surjectivity of $B_i$. 
\begin{lemma}\label{l:Surjective}
    Let $(\mathbf{B},\mathbf{c},\mathcal{Q})$ be the Brascamp--Lieb datum satisfying \eqref{e:FiniteCond}. 
    If $B_{i_0}$ is not surjective onto $\R^{n_{i_0}}$ for some $i_0$ then 
    $$
    0 = 
    {\rm I}^{(\mathcal{G})}_{0,\infty} (\mathbf{B},\mathbf{c},\mathcal{Q}) = {\rm I}^{(o)}_{0,\infty}(\mathbf{B},\mathbf{c},\mathcal{Q}). 
    $$
\end{lemma}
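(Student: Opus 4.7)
The plan is to show ${\rm I}^{(\mathcal{G})}_{0,\infty}(\mathbf{B},\mathbf{c},\mathcal{Q}) = 0$, from which both stated equalities follow: every centered Gaussian $g_A$ with $A>0$ belongs to $\mathcal{F}^{(o)}_{0,\infty}(\R^{n_i})$, so taking the infimum over the smaller Gaussian class only enlarges it, giving the sandwich $0 \le {\rm I}^{(o)}_{0,\infty} \le {\rm I}^{(\mathcal{G})}_{0,\infty}$ for free. The geometric input is that non-surjectivity of $B_{i_0}$ supplies a ``wasted'' unit vector $v \in ({\rm Im}\,B_{i_0})^\perp = {\rm ker}\,B_{i_0}^*$; my strategy is to degenerate the $i_0$-th Gaussian along $v$, which is invisible to the Brascamp--Lieb numerator (because $B_{i_0}^*v = 0$) but drives its normaliser to $+\infty$.

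Concretely, I would first invoke Lemma \ref{l:Const-Finite} -- applicable because \eqref{e:FiniteCond} is assumed -- to pick some large $\Lambda_0$ for which the Gaussian integral $\int_{\R^N} e^{\langle x, \mathcal{Q}x\rangle}\prod_{i=1}^m g_{\Lambda_0 {\rm id}_{n_i}}(B_ix)^{c_i}\,dx$ is finite (and strictly positive, since the integrand is positive). Keeping $A_i = \Lambda_0 {\rm id}_{n_i}$ for $i \ne i_0$, I would then set, for $t \in (0,1]$,
$$
A_{i_0}(t) := \Lambda_0 \bigl( {\rm id}_{n_{i_0}} - (1-t)\, v \otimes v \bigr),
$$
which is positive definite with $\det A_{i_0}(t) = \Lambda_0^{n_{i_0}}\, t$. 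The identity $B_{i_0}^*(v\otimes v)B_{i_0} = (B_{i_0}^*v)(B_{i_0}^*v)^* = 0$ gives $B_{i_0}^* A_{i_0}(t) B_{i_0} = \Lambda_0 B_{i_0}^* B_{i_0}$, so the integrand in the numerator is literally the same as in the baseline case and the numerator remains a fixed positive constant independent of $t$.

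On the other hand, $\int g_{A_{i_0}(t)}\, dx_{i_0} = (2\pi)^{n_{i_0}/2} / \sqrt{\Lambda_0^{n_{i_0}}\, t} \to +\infty$ as $t \to 0^+$, and since $c_{i_0}>0$ the full denominator of the Brascamp--Lieb functional blows up, forcing ${\rm BL}$ of this Gaussian tuple to tend to $0$. This yields ${\rm I}^{(\mathcal{G})}_{0,\infty}(\mathbf{B},\mathbf{c},\mathcal{Q}) = 0$ and closes the sandwich. I do not expect any serious obstacle here; the argument is a one-parameter Gaussian deformation plus the linear-algebra identity $B_{i_0}^* v = 0$, and the only checks are positive-definiteness of $A_{i_0}(t)$ for $t \in (0,1]$ and $t$-invariance of the numerator, both of which are immediate.
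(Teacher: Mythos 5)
Your argument is correct and is essentially the paper's own proof: find a unit vector $\omega_1$ (your $v$) in $({\rm Im}\,B_{i_0})^\perp$, invoke Lemma \ref{l:Const-Finite} to anchor a finite-numerator Gaussian tuple at scale $\Lambda_0$, then collapse the eigenvalue of the $i_0$-th Gaussian in the $v$-direction to zero, using $B_{i_0}^*v=0$ to keep the numerator fixed while the normalizing mass diverges. The only superficial difference is that you work with unnormalized $g_A$ and the eigenvalue $\Lambda_0 t$, while the paper uses normalized $\gamma_{\Sigma_1}$ with eigenvalue $\varepsilon$, but these are the same deformation up to reparametrization.
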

\begin{proof}
    We may suppose $i_0 = 1$ without loss of generality. 
    Since $B_1:\R^N\to \R^{n_1}$ is not surjective, if we decompose 
    $$
    \R^{n_1}
    = 
    {\rm Im}\, B_1 \oplus \big( {\rm Im}\, B_1 \big)^\perp 
    $$ then the subspace $ \big( {\rm Im}\, B_1 \big)^\perp $ is non-trivial i.e. 
    $
    \exists \omega_1\in  \big( {\rm Im}\, B_1 \big)^\perp  \cap  \mathbb{S}^{n_1-1}. 
    $
    In particular, we have that 
    \begin{equation}\label{e:omega_1-3/9}
        \langle B_1x, \omega_1\rangle = 0,\; \forall x \in \R^N, \quad {\rm Im}\, B_1 \subset \langle \omega_1\rangle^\perp.
    \end{equation}
    We now take a Gaussian input. 
    For this, we first notice from the assumption \eqref{e:FiniteCond} and Lemma \ref{l:Const-Finite} that 
    $$
    \exists \Lambda_0\gg1:\; {\rm BL}(\gamma_{\Lambda_0^{-1}{\rm id}_{n_1}},\ldots,\gamma_{\Lambda_0^{-1}{\rm id}_{n_m}}) <+\infty. 
    $$
    With this in mind, we define $n_1 \times n_1$ symmetric matrix $\Sigma_1 = \Sigma_1(\varepsilon)$ by 
    $$
    \Sigma_1^{-1}:= 
    \Lambda_0 P_{ \omega_1^{\perp}}^*P_{ \omega_1^{\perp}} + \varepsilon \omega_1\otimes \omega_1, 
    $$
    where $\omega_1^\perp$ means the subspace of $\R^{n_1}$ that is orthogonal to $\omega_1$, and $P_{\omega_1^\perp}$ is a projection onto $\omega_1^\perp$. 
    Clearly $\Sigma_1>0$ on $\R^{n_1}$ and 
    \begin{align*}
        \gamma_{\Sigma_1}(B_1x)
        &=
        \big( {\rm det}\, ( 2\pi (\Lambda_0^{-1} P_{ \omega_1^{\perp}}^*P_{ \omega_1^{\perp}} + \varepsilon^{-1} \omega_1\otimes \omega_1 ) \big)^{-\frac12} e^{-\frac12 \langle B_1x, \big( \Lambda_0 P_{ \omega_1^{\perp}}^*P_{ \omega_1^{\perp}} + \varepsilon \omega_1\otimes \omega_1 \big) B_1x\rangle} \\
        &= 
        (2\pi \Lambda_0^{-1})^{-\frac{n_1-1}2}
        (2\pi \varepsilon^{-1})^{-\frac12}
        e^{- \frac{\Lambda_0}2 | P_{\omega_1^\perp} B_1 x |^2 } 
        e^{-\frac{\varepsilon}2 | \langle \omega_1,B_1x\rangle |^2 }. 
    \end{align*}
    But we know from \eqref{e:omega_1-3/9} that $\langle \omega_1,B_1x\rangle = 0$ and that $P_{\omega_1^\perp} B_1x = B_1x$ from which 
    $$
    \gamma_{\Sigma_1}(B_1x)
    = 
    (2\pi \varepsilon^{-1})^{-\frac12}
    (2\pi \Lambda_0^{-1})^{-\frac{n_1-1}2}
    e^{- \frac{\Lambda_0}2 | B_1 x |^2 }.  
    $$
    By definition, we also know that 
    $$
    \gamma_{\Lambda_0^{-1}{\rm id}_{n_1}}(B_1x)
    = 
    (2\pi \Lambda_0^{-1})^{-\frac12}
    (2\pi \Lambda_0^{-1})^{-\frac{n_1-1}2}
    e^{- \frac{\Lambda_0}2 | B_1 x |^2 }
    $$
    and so 
    $$
    \gamma_{\Sigma_1}(B_1x)
    = 
    \sqrt{\frac{\varepsilon}{\Lambda_0}} \gamma_{\Lambda_0^{-1}{\rm id}_{n_1}}(B_1x). 
    $$
    For other $i\ge2$, we simply let $\Sigma_i:= \Lambda_0^{-1}{\rm id}_{n_i}$. Then 
    \begin{align*}
        {\rm BL}( \gamma_{\Sigma_1}(\varepsilon), \gamma_{\Sigma_2} \ldots, \gamma_{\Sigma_m})
        = 
        \big( \frac{\varepsilon}{\Lambda_0} \big)^{\frac{c_1}2} 
        {\rm BL}(\gamma_{\Lambda_0^{-1}{\rm id}_{n_1}},\ldots,\gamma_{\Lambda_0^{-1}{\rm id}_{n_m}}),
    \end{align*}
    and thus 
    \begin{align*}
        {\rm I}^{(\mathcal{G})}_{0,\infty}
        &\le 
        \lim_{\varepsilon\to0} 
        {\rm BL}( \gamma_{\Sigma_1}(\varepsilon), \gamma_{\Sigma_2}, \ldots, \gamma_{\Sigma_m})\\
        &= 
        \lim_{\varepsilon\to0}
        \big( \frac{\varepsilon}{\Lambda_0} \big)^{\frac{c_1}2} 
        {\rm BL}(\gamma_{\Lambda_0^{-1}{\rm id}_{n_1}},\ldots,\gamma_{\Lambda_0^{-1}{\rm id}_{n_m}}) = 0
    \end{align*}
    as we know that ${\rm BL}(\gamma_{\Lambda_0^{-1}{\rm id}_{n_1}},\ldots,\gamma_{\Lambda_0^{-1}{\rm id}_{n_m}}) < +\infty$ from the choice of $\Lambda_0$.

\end{proof}


\subsection{The Gaussian saturation in the case of $0< G_i \le H_i <\infty$}
We begin with the case of $0< G_i \le H_i <\infty$. 
The goal of this subsection is to prove the following: 
\begin{proposition}\label{p:RegIBL}
Let $(\mathbf{B},\mathbf{c},\mathcal{Q})$ be an arbitrary Brascamp--Lieb datum, and $0 < G_i \le H_i<\infty$ for $i=1,\ldots,m$. Then 
$$
{\rm I}_{{\bf G}, {\bf H}}^{(o)} ({\bf B}, {\bf c}, \mathcal{Q})
=
{\rm I}_{{\bf G}, {\bf H}}^{(\mathcal{G})} ({\bf B}, {\bf c}, \mathcal{Q}). 
$$
\end{proposition}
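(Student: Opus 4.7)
The bound ${\rm I}_{\mathbf{G},\mathbf{H}}^{(o)}(\mathbf{B},\mathbf{c},\mathcal{Q}) \le {\rm I}_{\mathbf{G},\mathbf{H}}^{(\mathcal{G})}(\mathbf{B},\mathbf{c},\mathcal{Q})$ is immediate since every admissible centered Gaussian $g_{A_i}$ with $G_i \le A_i \le H_i$ belongs to $\mathcal{F}^{(o)}_{G_i,H_i}$, so the real task is the reverse inequality. First I would reduce to the case in which $(\mathbf{B},\mathbf{c},\mathcal{Q})$ satisfies \eqref{e:FiniteCond} and each $B_i$ is surjective; otherwise Lemmas \ref{l:Const=Infty} and \ref{l:Surjective} force both sides to be trivially equal ($+\infty$ or $0$). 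The plan is then a direct-methods-plus-Euler--Lagrange argument: produce a minimizer of ${\rm BL}$ over $\prod_i \mathcal{F}^{(o)}_{G_i,H_i}$, and then show that every such minimizer is a tuple of centered Gaussians.

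\textbf{Step 1 (compactness and existence of a minimizer).} I would take a minimizing sequence $\{\mathbf{f}^{(k)}\}$ normalized to $\int f_i^{(k)} \, dx = 1$ and write $f_i^{(k)} = e^{-\varphi_i^{(k)}}$. The two-sided Hessian pinching $G_i \le \nabla^2 \varphi_i^{(k)} \le H_i$ together with Lemma \ref{l:Convexity} yields (i) uniform pointwise and Lipschitz bounds on $\varphi_i^{(k)}$ on compacta and (ii) uniform Gaussian-type decay $f_i^{(k)}(x) \le C e^{-c|x|^2}$ with constants depending only on $G_i, H_i, n_i$. Arzel\`a--Ascoli plus a diagonal extraction then gives $\varphi_i^{(k)} \to \varphi_i^*$ locally uniformly, and the Hessian pinching passes to the limit. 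The uniform tail decay together with dominated convergence gives $\int f_i^* \, dx = 1$ and $\int x\, f_i^* \, dx = 0$, so $\mathbf{f}^* \in \prod_i \mathcal{F}^{(o)}_{G_i,H_i}$. A parallel dominated-convergence argument on $\R^N$, justified by the $L^1$-integrability of a Gaussian majorant of $e^{\langle x, \mathcal{Q} x\rangle}\prod_i g_{G_i}(B_i x)^{c_i}$ from Lemma \ref{l:Const-Finite}, yields ${\rm BL}(\mathbf{f}^{(k)}) \to {\rm BL}(\mathbf{f}^*)$, so $\mathbf{f}^*$ realizes ${\rm I}_{\mathbf{G},\mathbf{H}}^{(o)}$.

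\textbf{Step 2 (any minimizer is Gaussian).} Given the minimizer $\mathbf{f}^*$, I would perturb one slot at a time via $\varphi_i^* + \epsilon \psi_i$ for $\psi_i \in C_c^\infty(\R^{n_i})$, using Lagrange multipliers $\lambda_i \in \R$ and $\mu_i \in \R^{n_i}$ to enforce the normalization and centering constraints. Wherever $\nabla^2 \varphi_i^*$ lies strictly between $G_i$ and $H_i$, such a perturbation is admissible in both signs of $\epsilon$ for $|\epsilon|$ small, and the resulting Euler--Lagrange condition -- obtained by disintegrating the BL numerator along the fibers of $B_i$ -- forces $\varphi_i^*$ to be quadratic-plus-affine on that interior region. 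Propagating this across all components and invoking the centering to kill any linear term gives $f_i^* = g_{A_i^*}$ for some admissible $A_i^*$, which completes the proof.

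\textbf{Main obstacle.} The delicate point is the saturation case in Step 2: if $\nabla^2 \varphi_i^*$ touches either $G_i$ or $H_i$ on a set of positive measure, only one-sided perturbations ($\nabla^2 \psi_i \ge 0$ or $\le 0$) are admissible there, so the stationarity condition degenerates into a variational inequality rather than an equation. To still conclude the Gaussian shape one must decompose $\R^{n_i}$ into its saturation set and complement, deduce quadraticity on the complement by the argument above, and then match conditions across the decomposition while maintaining the centering constraint globally. In the even setting of \cite{NT3} the symmetry of $f_i$ collapses most of this bookkeeping, whereas the weaker centered assumption adopted here forces a more intricate account of admissible perturbations -- this is precisely the ``less-regular than even'' feature the authors foreshadow, and Lemma \ref{l:Fradelizi} is the central new ingredient powering the uniform estimates required throughout.
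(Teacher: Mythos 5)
Your Step 1 (existence of an extremizer via Lemma \ref{l:Convexity}, Arzel\`a--Ascoli, and a diagonal extraction) matches the paper's Lemma \ref{l:Extremiser} in both spirit and technique, but Step 2 departs entirely from the paper and contains a genuine unfilled gap. The paper does \emph{not} attempt to show that a minimizer must be Gaussian via Euler--Lagrange stationarity. Instead it exploits a Ball/Lieb-type \emph{rotation} or \emph{self-convolution} monotonicity (Lemma \ref{l:Monotone*}): writing ${\rm BL}(\mathbf{f})^2 = \int F\ast F$ and changing variables $(x,y)\mapsto((x+y)/\sqrt2,(x-y)/\sqrt2)$, the inner $y$-integral produces inputs $y_i\mapsto f_i((B_ix+y_i)/\sqrt2)f_i((B_ix-y_i)/\sqrt2)$ which are automatically \emph{even}, centered, $G_i$-uniformly log-concave and $H_i$-semi log-convex, so the defining inequality for ${\rm I}^{(o)}_{\mathbf G,\mathbf H}$ applies slotwise. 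Iterating this on the extremizer and invoking the central limit theorem, the rescaled self-convolutions $(2^k)^{n_i/2}f_i^{(2^k)}(2^{k/2}\cdot)$ converge to the centered Gaussian $\gamma_{\Sigma_i}$ with $\Sigma_i={\rm Cov}(f_i)$ (and $G_i\le\Sigma_i^{-1}\le H_i$), and Fatou then gives ${\rm I}^{(o)}_{\mathbf G,\mathbf H}\ge {\rm BL}(\gamma_{\Sigma_1},\dots,\gamma_{\Sigma_m})\ge {\rm I}^{(\mathcal G)}_{\mathbf G,\mathbf H}$. This route never needs to know the shape of the minimizer.

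Your Step 2, by contrast, asserts without proof that the Euler--Lagrange condition on the region where $\nabla^2\varphi_i^*$ lies strictly between $G_i$ and $H_i$ ``forces $\varphi_i^*$ to be quadratic-plus-affine.'' That assertion is the crux of the whole theorem and is not justified: the stationarity condition for a BL-type functional is a nonlinear integral identity relating $f_i$ to a marginal of the weighted product of all the other $f_j$'s, and there is no elementary reason it forces a quadratic potential -- indeed, even in the classical unconstrained forward case, Lieb's proof of Gaussian saturation uses a rotation/tensorization argument for exactly this reason, not a direct Euler--Lagrange analysis. Your ``main obstacle'' paragraph correctly diagnoses the additional difficulty of saturation sets (where only one-sided Hessian perturbations are admissible, so you get a variational inequality), but you offer no mechanism for matching conditions across the saturation boundary or for propagating quadraticity onto it. As written, Step 2 is a sketch of a program rather than a proof, and the central claim it rests on is unproven. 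If you want to salvage the direct-characterization route you would essentially be re-deriving a much harder structure theorem than what the proposition needs; the self-convolution monotonicity plus CLT is the intended and far more economical path.

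One further minor point on Step 1: you invoke dominated convergence to pass ${\rm BL}(\mathbf f^{(k)})\to{\rm BL}(\mathbf f^*)$, but since $\mathcal Q$ can have a positive part, $e^{\langle x,\mathcal Qx\rangle}\prod_i g_{G_i}(B_ix)^{c_i}$ need not be integrable without the additional control from \eqref{e:FiniteCond} (which you have assumed) and a careful choice of exponent; in the paper one only uses Fatou's lemma in the downward direction, which is all that is required to conclude the limit is an extremizer and avoids the need for a dominating function.
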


\begin{remark}
In fact, the proof of this proposition works well and the same conclusion is true even when  $c_i \in \mathbb{R}\setminus\{0\}$. 
\end{remark}

As in the proof of \cite[Theorem 2.3]{NT3}, the proof of Proposition \ref{p:RegIBL} is decomposed into three steps; (Step 1) the existence of the extremizer, (Step 2) the monotonicity-type result of the inequality under the self-convolution, (Step 3) the iteration of the monotonicity-type result. 
Once we have obtained Lemma \ref{l:Convexity}-\textit{(3), (4)},  we may establish the Step 1 by the exactly same argument as \cite[Theorem 2.1]{NT3}. 
So, we just give a statement here and refer \cite{NT3} for more details.  
\begin{lemma}\label{l:Extremiser}
Let $(\mathbf{B},\mathbf{c},\mathcal{Q})$ be an arbitrary Brascamp--Lieb datum, and $0< G_i \le H_i  <\infty$. 
Then ${\rm I}_{{\bf G}, {\bf H}}^{(o)}({\bf B}, {\bf c}, \mathcal{Q})$ is extremizable in the sense that 
$$
\exists \mathbf{f} \in \mathcal{F}^{(o)}_{G_1,H_1}(\R^{n_1}) \times \cdots \times \mathcal{F}^{(o)}_{G_m,H_m}(\R^{n_m}):\; 
{\rm I}_{{\bf G}, {\bf H}}^{(o)}({\bf B}, {\bf c}, \mathcal{Q})
= 
{\rm BL}(\mathbf{f}). 
$$
\end{lemma}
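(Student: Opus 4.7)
The plan is the direct method of the calculus of variations: extract a candidate minimizer from a minimizing sequence using the quantitative regularity of Lemma \ref{l:Convexity}, and then pass to the limit via Fatou's lemma on the Brascamp--Lieb numerator. Fix a minimizing sequence $\mathbf{f}^{(k)}=(f_1^{(k)},\ldots,f_m^{(k)})$ with $f_i^{(k)}\in \mathcal{F}^{(o)}_{G_i,H_i}$ and ${\rm BL}(\mathbf{f}^{(k)})\to {\rm I}^{(o)}_{\mathbf{G},\mathbf{H}}$. Since ${\rm BL}$ is invariant under $f_i\mapsto t_i f_i$, I normalize so that $\int_{\R^{n_i}} f_i^{(k)}\,dx_i=1$ for every $i,k$. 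The trivial cases ${\rm I}^{(o)}_{\mathbf{G},\mathbf{H}}\in\{0,+\infty\}$ can be detected in advance by adapting Lemmas \ref{l:Const=Infty}--\ref{l:Surjective} to the setting $0<G_i\le H_i<\infty$, so I may assume $0<{\rm I}^{(o)}_{\mathbf{G},\mathbf{H}}<+\infty$.

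Setting $\varphi_i^{(k)}:=-\log f_i^{(k)}$, Lemma \ref{l:Convexity} provides uniform upper and lower bounds on $\varphi_i^{(k)}(0)$, uniform two-sided quadratic pointwise bounds on $\varphi_i^{(k)}$, and a uniform Lipschitz estimate on every cube $[-r,r]^{n_i}$, with constants depending only on $(G_i,H_i,n_i,r)$ and not on $k$. The Arzel\`a--Ascoli theorem combined with a diagonal extraction over an exhaustion of $\R^{n_i}$ by compact cubes then yields, along a subsequence, continuous limits $\varphi_i:\R^{n_i}\to\R$ such that $\varphi_i^{(k)}\to \varphi_i$ locally uniformly. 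Define $f_i:=e^{-\varphi_i}$. The convexity of $\varphi_i^{(k)}-\tfrac12\langle\cdot,G_i\cdot\rangle$ and of $\tfrac12\langle\cdot,H_i\cdot\rangle-\varphi_i^{(k)}$ are each preserved under local uniform limits, so $f_i$ is $G_i$-uniformly log-concave and $H_i$-semi log-convex.

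Next, dominated convergence on $\R^{n_i}$, with the integrable Gaussian majorant $C_i e^{-\tfrac{\lambda_i}{4}|x|^2}$ supplied by Lemma \ref{l:Convexity}(2), yields $\int f_i\,dx_i = \lim_k \int f_i^{(k)}\,dx_i = 1$. The same majorant multiplied by $|x|$ remains integrable, so dominated convergence applied to $x_i f_i^{(k)}$ gives $\int x_i f_i\,dx_i = \lim_k \int x_i f_i^{(k)}\,dx_i = 0$. Hence $\mathbf{f}\in \prod_i \mathcal{F}^{(o)}_{G_i,H_i}$, and by definition of the infimum ${\rm BL}(\mathbf{f})\ge {\rm I}^{(o)}_{\mathbf{G},\mathbf{H}}$.

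For the reverse inequality I apply Fatou's lemma to the numerator: writing $N_k(x):=e^{\langle x,\mathcal{Q}x\rangle}\prod_i f_i^{(k)}(B_i x)^{c_i}$, pointwise convergence $N_k\to N:=e^{\langle x,\mathcal{Q}x\rangle}\prod_i f_i(B_i x)^{c_i}$ gives $\int N\le\liminf_k \int N_k={\rm I}^{(o)}_{\mathbf{G},\mathbf{H}}$. Since all normalized denominators equal $1$, this reads ${\rm BL}(\mathbf{f})\le {\rm I}^{(o)}_{\mathbf{G},\mathbf{H}}$; combining with the reverse bound shows $\mathbf{f}$ is an extremizer. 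The main delicate point I anticipate is ensuring that the centering constraint is preserved in passing to the limit, since for general centered (as opposed to even) log-concave functions the first moment is not automatic from symmetry; here, Lemma \ref{l:Convexity}(2) supplies exactly the uniform Gaussian tail needed to transfer the vanishing first moment from $f_i^{(k)}$ to $f_i$, which is the principal reason the argument is more demanding than the even case treated in \cite{NT3}.
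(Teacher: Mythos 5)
Your proposal is correct and follows essentially the same route as the paper: the paper remarks that, having established Lemma \ref{l:Convexity}(3),(4), one argues exactly as in \cite[Theorem 2.1]{NT3} (normalize, Arzel\`a--Ascoli with a diagonal extraction over compact exhaustions, pass convexity/concavity and the moment constraints to the limit via the uniform Gaussian tails of Lemma \ref{l:Convexity}(2), close with Fatou). Your explicit handling of the centered constraint via the uniform tail bound, and your observation that the infinite case is trivially extremizable, match the paper's intent; the only superfluous step is invoking adapted versions of Lemmas \ref{l:Const=Infty}--\ref{l:Surjective} to exclude the degenerate values of the constant, since the Fatou argument already covers ${\rm I}^{(o)}_{\mathbf{G},\mathbf{H}}=0$ and the case $=\infty$ is vacuous.
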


\begin{remark}
In this lemma, ${\rm I}_{{\bf G}, {\bf H}}^{(o)}({\bf B}, {\bf c}, \mathcal{Q})=\infty$ might be allowed. In this case, any function in $\mathcal{F}^{(o)}_{G_1,H_1}(\R^{n_1}) \times \cdots \times \mathcal{F}^{(o)}_{G_m,H_m}(\R^{n_m})$ is an extremizer. 
\end{remark}

The Step 2 amounts to the following monotonicity-type result that we refer as Ball's inequality
following \cite{BBBCF, BCCT}. 
\begin{lemma}\label{l:Monotone*}
Let $(\mathbf{B},\mathbf{c},\mathcal{Q})$ be arbitrary Brascamp--Lieb datum and $0< G_i\le H_i<\infty$.  
For any ${\bf f} \in \mathcal{F}_{G_1, H_1,}^{(o)}(\R^{n_1}) \times \cdots \times \mathcal{F}_{G_m, H_m}^{(o)}(\R^{n_m})$, 
$$
{\rm BL}({\bf f})^2 
\ge
{\rm I}_{{\bf G}, {\bf H}}^{(o)} ({\bf B}, {\bf c}, \mathcal{Q}) {\rm BL}( 2^{\frac{n_1}{2}}f_1 \ast f_1(\sqrt{2} \cdot), \dots, 2^{\frac{n_m}{2}}f_m \ast f_m(\sqrt{2} \cdot)). 
$$
\end{lemma}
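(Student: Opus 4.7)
The strategy is the classical ``rotation trick'' (Ball's inequality): square the Brascamp--Lieb integral on $\R^N$, pass to $\R^N\oplus\R^N$, and apply the orthogonal rotation $R(u,v)=((u+v)/\sqrt{2},(u-v)/\sqrt{2})$, under which the diagonal quadratic form $\langle u,\mathcal{Q}u\rangle+\langle v,\mathcal{Q}v\rangle$ is invariant. For each $u\in\R^N$, introduce the auxiliary functions on $\R^{n_i}$,
\begin{equation*}
\phi_i^u(w):=f_i\!\left(\tfrac{B_iu+w}{\sqrt{2}}\right)f_i\!\left(\tfrac{B_iu-w}{\sqrt{2}}\right).
\end{equation*}
Rotation invariance together with Fubini then yields
\begin{equation*}
\left(\int_{\R^N}e^{\langle x,\mathcal{Q}x\rangle}\prod_i f_i(B_ix)^{c_i}\,dx\right)^{\!2} = \int_{\R^N}e^{\langle u,\mathcal{Q}u\rangle}\left(\int_{\R^N}e^{\langle v,\mathcal{Q}v\rangle}\prod_i\phi_i^u(B_iv)^{c_i}\,dv\right)du,
\end{equation*}
while the substitution $w=\sqrt{2}t-B_iu$ identifies $\int_{\R^{n_i}}\phi_i^u(w)\,dw = 2^{n_i/2}(f_i\ast f_i)(\sqrt{2}B_iu) = h_i(B_iu)$ with $h_i(z):=2^{n_i/2}(f_i\ast f_i)(\sqrt{2}z)$; the identity $\int h_i = (\int f_i)^2$ is immediate.

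\textbf{Admissibility of $\phi_i^u$.} The crucial step is to verify that $\phi_i^u\in\mathcal{F}^{(o)}_{G_i,H_i}(\R^{n_i})$ for almost every $u$. Evenness of $\phi_i^u$ in $w$ is manifest, so the centering condition is automatic (and in fact stronger than needed). Setting $\eta_i:=-\log f_i-\tfrac{1}{2}\langle\cdot,G_i\cdot\rangle$, which is convex by $G_i$-uniform log-concavity of $f_i$, the cross terms $\langle w,G_iB_iu\rangle$ cancel to produce
\begin{equation*}
-\log\phi_i^u(w)-\tfrac{1}{2}\langle w,G_iw\rangle = \eta_i\!\left(\tfrac{B_iu+w}{\sqrt{2}}\right) + \eta_i\!\left(\tfrac{B_iu-w}{\sqrt{2}}\right) + \tfrac{1}{2}\langle B_iu,G_iB_iu\rangle,
\end{equation*}
which is convex in $w$; the identical computation with $H_i$ in place of $G_i$ gives $H_i$-semi-log-convexity. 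Since $h_i\in L^1(\R^{n_i})$, one also has $0<\int\phi_i^u=h_i(B_iu)<\infty$ for almost every $u$ on which the outer integrand is nontrivial.

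\textbf{Finishing and main obstacle.} For such $u$, the definition of ${\rm I}^{(o)}_{\mathbf{G},\mathbf{H}}$ applied to the inner $v$-integral gives
\begin{equation*}
\int_{\R^N}e^{\langle v,\mathcal{Q}v\rangle}\prod_i\phi_i^u(B_iv)^{c_i}\,dv \ge {\rm I}^{(o)}_{\mathbf{G},\mathbf{H}}(\mathbf{B},\mathbf{c},\mathcal{Q})\prod_i h_i(B_iu)^{c_i},
\end{equation*}
and integrating over $u$, then dividing by $\prod_i(\int f_i)^{2c_i}=\prod_i(\int h_i)^{c_i}$, produces exactly ${\rm BL}(\mathbf{f})^2\ge {\rm I}^{(o)}_{\mathbf{G},\mathbf{H}}\cdot{\rm BL}(\mathbf{h})$. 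The main technical hurdle is not the regularity computation above (which is essentially algebraic and notably does not exploit centeredness, since $\phi_i^u$ is automatically even), but rather the measure-theoretic bookkeeping surrounding admissibility: controlling the exceptional set of $u$ on which $\phi_i^u$ fails to be in $L^1_+$, particularly when some $B_i$ is not surjective. These points are handled using the uniform estimates of Lemma \ref{l:Convexity} together with the trivial-case reductions in Lemmas \ref{l:Const=Infty} and \ref{l:Surjective}.
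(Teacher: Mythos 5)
Your proof is correct and follows essentially the same approach as the paper's: square the functional, apply the rotation $(u,v)\mapsto((u+v)/\sqrt 2,(u-v)/\sqrt 2)$, observe that the inner functions $\phi_i^u$ are even (hence centered), $G_i$-uniformly log-concave, and $H_i$-semi log-convex, and then invoke the definition of ${\rm I}^{(o)}_{\mathbf G,\mathbf H}$ on the inner integral before integrating in $u$. The only difference is that you spell out the log-concavity/log-convexity computation and the almost-everywhere admissibility bookkeeping explicitly, where the paper cites \cite[Proposition 2.4]{NT3}.
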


\begin{proof}
We may suppose that $\int_{\R^{n_i}} f_i \, dx_i=1$. 
Put 
$$
F(x) \coloneqq e^{\langle x, \mathcal{Q} x\rangle} \prod_{i=1}^m f_i(B_ix)^{c_i}, \;\;\; x \in \R^N, 
$$
and observe that 
$$
\int_{\R^N} F \ast F\, dx = {\rm BL}({\bf f})^2. 
$$
Hence changing of variable reveals that 
\begin{align*}
&
{\rm BL}({\bf f})^2 
=&
\int_{\R^N} e^{\langle x, \mathcal{Q}x \rangle } \int_{\R^N} e^{\langle y, \mathcal{Q}y \rangle } 
\prod_{i=1}^m \left(f_i\left( B_i \frac{x + y}{\sqrt{2}} \right) f_i\left(B_i  \frac{x - y}{\sqrt{2}} \right) \right)^{c_i} \, dy\, dx;
\end{align*}
we refer \cite[Proposition 2.4]{NT3} for more detailed calculation. 
If we let 
$$
F_i(y_i)
=
F_i^{(x)}(y_i)
\coloneqq f_i\left( \frac{B_ix + y_i}{\sqrt{2}} \right) f_i\left( \frac{B_ix - y_i}{\sqrt{2}} \right), \;\;\; y_i \in \R^{n_i}
$$
for each fixed $x \in \R^N$, then it is obviously even and particularly centered. 
Moreover,  we may see that $F_i$ is $G_i$-uniformly log-concave and $H_i$-semi log-convex, 
and thus $F_i \in \mathcal{F}^{(o)}_{G_i,H_i}(\R^{n_i})$. 
Therefore, it follows that 
\begin{align*}
&
\int_{\R^N} e^{\langle x, \mathcal{Q}x \rangle } \int_{\R^N} e^{\langle y, \mathcal{Q}y \rangle } 
\prod_{i=1}^m \left(f_i( \frac{B_ix + B_iy}{\sqrt{2}} ) f_i( \frac{B_ix - B_iy}{\sqrt{2}} ) \right)^{c_i} \, dy\, dx
\\
\ge&
{\rm I}_{{\bf G}, {\bf H}}^{(o)} ({\bf B}, {\bf c}, \mathcal{Q})
\int_{\R^N} e^{\langle x, \mathcal{Q}x \rangle } \prod_{i=1}^m \left( \int_{\R^{n_i}} F_i \, dy_i \right)^{c_i}\, dx
\\
=&
{\rm I}_{{\bf G}, {\bf H}}^{(o)} ({\bf B}, {\bf c}, \mathcal{Q})
{\rm BL}( 2^{\frac{n_1}{2}}f_1 \ast f_1(\sqrt{2} \cdot), \dots, 2^{\frac{n_m}{2}}f_m \ast f_m(\sqrt{2} \cdot)), 
\end{align*}
as we wished. 
\end{proof}

As the Step 3, let us conclude the proof of Proposition \ref{p:RegIBL} by combining above lemmas.
\begin{proof}[Proof of Proposition \ref{p:RegIBL}]
{Firstly we may suppose that ${\rm I}_{{\bf G}, {\bf H}}^{(o)} ({\bf B}, {\bf c}, \mathcal{Q}) < +\infty$; otherwise the assertion is evident.} 
Thanks to Lemma \ref{l:Extremiser}, there exists an extremizer ${\bf f} \in \mathcal{F}_{G_1, H_1}^{(o)}(\R^{n_1}) \times \cdots \times\mathcal{F}_{G_m, H_m}^{(o)}(\R^{n_m})$ such that 
$$
{\rm I}_{{\bf G}, {\bf H}}^{(o)} ({\bf B}, {\bf c}, \mathcal{Q})
=
{\rm BL} ({\bf B}, {\bf c}, \mathcal{Q} ; {\bf f}). 
$$
Without loss of generality, we may suppose that $\int_{\R^{n_i}} f_i \, dx_i=1$. 
We then apply Lemma \ref{l:Monotone*} to see that 
$$
{\rm BL}({\bf f})^2 
\ge
{\rm I}_{{\bf G}, {\bf H}}^{(o)} ({\bf B}, {\bf c}, \mathcal{Q}) {\rm BL}( 2^{\frac{n_1}{2}}f_1 \ast f_1(\sqrt{2} \cdot), \dots, 2^{\frac{n_m}{2}}f_m \ast f_m(\sqrt{2} \cdot)). 
$$
The next observation is that $2^{\frac{n_i}{2}}f_i \ast f_i(\sqrt{2} \cdot) \in \mathcal{F}_{G_i, H_i}^{(o)}(\R^{n_i})$ for $i=1, \dots, m$. 
For this,  it is evident from the definition that the self-convolution preserves the centering condition. 
The preservation of uniform log-concavity and semi log-convexity under the self-convolution is no longer trivial, but this may be confirmed by virtue of the Pr\'{e}kopa--Leindler inequality together with the observation due to Brascamp--Lieb \cite{BLJFA}; see \cite[Lemma 6.2]{NT3} for more details. 
Thus we may again apply Lemma \ref{l:Monotone*} for $2^{\frac{n_i}{2}}f_i \ast f_i(\sqrt{2} \cdot)$.  
By iterating this procedure, we obtain that 
\begin{align*}
{\rm I}_{{\bf G}, {\bf H}}^{(o)} ({\bf B}, {\bf c}, \mathcal{Q})^{2^k}
=&
{\rm BL} ({\bf f})^{2^k}
\\
\ge&
{\rm I}_{{\bf G}, {\bf H}}^{(o)} ({\bf B}, {\bf c}, \mathcal{Q})^{2^k-1} 
{\rm BL}( (2^k)^{\frac{n_1}{2}} f_1^{(2^k)}(2^{\frac{k}2} \cdot), \dots, (2^k)^{\frac{n_m}{2}}f_m^{(2^k)}(2^{\frac{k}2} \cdot)), 
\end{align*}
where 
\begin{equation}\label{e:IterateConv}
f_i^{(2^k)}
\coloneqq
\overbrace{f_i \ast \cdots \ast f_i}^{\text{$2^k$-times}}, \;\;\; i=1, \dots, m.
\end{equation}
Since $0<{\rm I}_{{\bf G}, {\bf H}}^{(o)} ({\bf B}, {\bf c}, \mathcal{Q}) <+\infty$ by Lemma \ref{l:Convexity},  it follows that 
$$
{\rm I}_{{\bf G}, {\bf H}}^{(o)} ({\bf B}, {\bf c}, \mathcal{Q})
\ge
{\rm BL}( (2^k)^{\frac{n_1}{2}} f_1^{(2^k)}(2^{\frac{k}2} \cdot), \dots, (2^k)^{\frac{n_m}{2}}f_m^{(2^k)}(2^{\frac{k}2} \cdot)). 
$$
Now the central limit theorem yields that $(2^k)^{\frac{n_i}{2}} f_i^{(2^k)}(2^{\frac{k}2} \cdot)$ converges to $\gamma_{\Sigma_i}$ as $k \to \infty$ in $L^1$ topology, and thus especially pointwisely $dx_i$-a.e. on $\R^{n_i}$, where $\Sigma_i$ is the covariance matrix of $f_i$. 
In view of $f_i \in \mathcal{F}_{G_i, H_i}^{(o)}(\R^{n_i})$,  so is  $\gamma_{\Sigma_i}$ i.e. $G_i \le \Sigma_i^{-1}\le H_i$.  
Hence, from Fatou's lemma,  we derive that 
\begin{align}\label{e:Iteration}
{\rm I}_{{\bf G}, {\bf H}}^{(o)} ({\bf B}, {\bf c}, \mathcal{Q})
\ge
&\liminf_{k \to \infty} {\rm BL}( (2^k)^{\frac{n_1}{2}} f_1^{(2^k)}(2^{\frac{k}2} \cdot), \dots, (2^k)^{\frac{n_m}{2}}f_m^{(2^k)}(2^{\frac{k}2} \cdot))
\\
\ge&
{\rm BL}( \gamma_{\Sigma_1}, \dots,  \gamma_{\Sigma_m}) 
\ge
{\rm I}_{{\bf G}, {\bf H}}^\mathcal{(G)}({\bf B}, {\bf c}, \mathcal{Q}). \nonumber 
\end{align}
Since the converse inequality is evident we conclude the proof. 
\end{proof}
{\begin{remark}
    In the above proof, we used the regularity assumption $0<G_i\le H_i <\infty$ to ensure the existence of the extremizer and positivity of the inverse Brascamp--Lieb constant only. 
    In fact, if one apriori knows these properties the above proof would work even without the regularity  assumption. 
    This point will be relevant when we investigate the case of equality in \eqref{e:OriginGCI}. 
\end{remark}
}
\subsection{The case of $G_i >0 $ and $ H_i = \infty$: a limiting argument}
By taking a limit $G_i\to0$ and $H_i \to \infty$ in Proposition \ref{p:RegIBL}, we may formally derive Theorem \ref{t:CenteredIBL}. 
However, making the argument rigorous requires further works with careful and complicated analysis.
This is because we only impose the centering assumption rather than evenness.  
We will thus take three steps towards completing the proof of Theorem \ref{t:CenteredIBL}: 
\begin{enumerate}
\item 
The case of $G_i>0$ and inputs $f_i \in \mathcal{F}^{(o)}_{G_i,\infty}$ are supposed to have a compact support. 
\item 
The case of $G_i>0$ and inputs $f_i \in \mathcal{F}^{(o)}_{G_i,\infty}$ are arbitrary.  
\item 
The case of $G_i \ge 0 $ and inputs  $f_i \in \mathcal{F}^{(o)}_{G_i,\infty}$ are arbitrary.  
\end{enumerate}

\begin{proposition}\label{p:IBL-CompactSupp}
    Let $(\mathbf{B},\mathbf{c},\mathcal{Q})$ be arbitrary Brascamp--Lieb datum and $0< G_i$ for $i=1, \dots, m$. 
    Then for any $f_i \in \mathcal{F}^{(o)}_{G_i, \infty}(\R^{n_i})$ which is compactly supported, we have that 
    $$
    \int_{\mathbb{R}^N} e^{\langle x,\mathcal{Q}x\rangle} 
    \prod_{i=1}^m f_i(B_ix)^{c_i}\, dx 
    \ge 
    {\rm I}^{(\mathcal{G})}_{\mathbf{G}}(\mathbf{B},\mathbf{c}, \mathcal{Q}) 
    \prod_{i=1}^m \left( \int_{\mathbb{R}^{n_i}} f_i\, dx_i \right)^{c_i}. 
    $$
\end{proposition}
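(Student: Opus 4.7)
The plan is to approximate the given compactly supported $f_i \in \mathcal{F}^{(o)}_{G_i, \infty}$ by functions in the finite-$H$ regularized class $\mathcal{F}^{(o)}_{G_i, H_i^\epsilon}$ with $H_i^\epsilon := G_i + \frac{1}{\epsilon}{\rm id}_{n_i}$, apply Proposition \ref{p:RegIBL} at each $\epsilon > 0$, and then pass to the limit $\epsilon \to 0$. Without loss of generality we may assume ${\rm I}^{(\mathcal{G})}_{\mathbf{G}}(\mathbf{B},\mathbf{c},\mathcal{Q}) < \infty$, since otherwise Lemma \ref{l:Const=Infty} renders the statement trivial.

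\textbf{Regularization via Moreau envelope.} Write $f_i = g_{G_i} h_i$ where $h_i := f_i / g_{G_i}$ is a bounded log-concave function with compact support $K_i$. Let $\psi_i := -\log h_i$ (convex on $K_i$, $+\infty$ elsewhere) and form its Moreau envelope
\[
\psi_i^\epsilon(x) := \inf_{y \in \mathbb{R}^{n_i}} \Bigl[ \psi_i(y) + \tfrac{1}{2\epsilon}|x-y|^2 \Bigr].
\]
Classical Moreau--Yosida theory provides that $\psi_i^\epsilon$ is convex, finite on $\mathbb{R}^{n_i}$, satisfies $\nabla^2 \psi_i^\epsilon \le \frac{1}{\epsilon}{\rm id}$, and $\psi_i^\epsilon \nearrow \psi_i$ monotonically as $\epsilon \searrow 0$. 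Setting $h_i^\epsilon := e^{-\psi_i^\epsilon}$ and $f_i^\epsilon := g_{G_i} h_i^\epsilon$, the identity $\nabla^2(-\log f_i^\epsilon) = G_i + \nabla^2 \psi_i^\epsilon$ immediately shows $f_i^\epsilon$ is $G_i$-uniformly log-concave and $H_i^\epsilon$-semi log-convex, while $f_i^\epsilon \searrow f_i$ pointwise. Define $m_i^\epsilon := \int x f_i^\epsilon \, dx_i / \int f_i^\epsilon \, dx_i$ and $\hat f_i^\epsilon(x) := f_i^\epsilon(x + m_i^\epsilon)$; since translations preserve both regularity conditions, $\hat f_i^\epsilon \in \mathcal{F}^{(o)}_{G_i, H_i^\epsilon}$.

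\textbf{Passing to the limit.} Proposition \ref{p:RegIBL} gives
\[
{\rm BL}(\hat{\mathbf{f}}^\epsilon) \ge {\rm I}^{(\mathcal{G})}_{\mathbf{G}, \mathbf{H}^\epsilon}(\mathbf{B}, \mathbf{c}, \mathcal{Q}) \prod_{i=1}^m \Bigl( \int \hat f_i^\epsilon \, dx_i \Bigr)^{c_i}.
\]
As $\epsilon \to 0$ the admissible family of Gaussian covariances $\{A_i : G_i \le A_i \le H_i^\epsilon\}$ exhausts $\{A_i \ge G_i\}$, so ${\rm I}^{(\mathcal{G})}_{\mathbf{G}, \mathbf{H}^\epsilon} \searrow {\rm I}^{(\mathcal{G})}_{\mathbf{G}}$. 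The monotone bound $f_i^\epsilon \le f_i^{\epsilon_0}$ for $\epsilon \le \epsilon_0$, together with the dominating integrable function $|x| f_i^{\epsilon_0}$, gives $m_i^\epsilon \to 0$ and $\int \hat f_i^\epsilon \to \int f_i$ by dominated convergence. Producing an $\epsilon$-uniform integrable dominant of the BL integrand, one concludes ${\rm BL}(\hat{\mathbf{f}}^\epsilon) \to {\rm BL}(\mathbf{f})$, and passing to the limit in the displayed inequality yields the claim.

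\textbf{Main obstacle.} The delicate point is the convergence ${\rm BL}(\hat{\mathbf{f}}^\epsilon) \to {\rm BL}(\mathbf{f})$, which requires an $\epsilon$-uniform integrable dominant for $e^{\langle x, \mathcal{Q}x\rangle} \prod_i \hat f_i^\epsilon(B_i x)^{c_i}$. Using $\hat f_i^\epsilon(y) \le f_i^{\epsilon_0}(y + m_i^\epsilon)$ together with the uniform smallness of $m_i^\epsilon$ for $\epsilon \le \epsilon_0$, this reduces to controlling a Gaussian integral associated with the pointwise upper bounds from Lemma \ref{l:Convexity}-(2) applied to $f_i^{\epsilon_0} \in \mathcal{F}^{(o)}_{G_i, H_i^{\epsilon_0}}$. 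Our standing assumption ${\rm I}^{(\mathcal{G})}_{\mathbf{G}} < \infty$ combined with Lemma \ref{l:Const-Finite} guarantees that for $\epsilon_0$ small enough, the resulting Gaussian integral converges, completing the dominated convergence argument.
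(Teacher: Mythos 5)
Your regularization device is genuinely different from the paper's: the paper evolves $f_i$ by the $\beta$-Fokker--Planck (Ornstein--Uhlenbeck) flow, which preserves both the mass and the centering automatically and gains $H_i(t)$-semi log-convexity via the Li--Yau inequality, while you take the Moreau envelope of $\psi_i = -\log(f_i/g_{G_i})$ and then re-center by a translation $\hat f_i^\epsilon(\cdot) := f_i^\epsilon(\cdot + m_i^\epsilon)$. That construction is sound as far as it goes: $\hat f_i^\epsilon \in \mathcal{F}^{(o)}_{G_i,H_i^\epsilon}$, the Moreau envelope gives the required $\frac1\epsilon$-Lipschitz gradient, translations preserve both regularity conditions, $\psi_i^\epsilon \nearrow \psi_i$ monotonically so $f_i^\epsilon \searrow f_i$, and the mass/first-moment convergence and $m_i^\epsilon \to 0$ do follow by dominated convergence with dominant $f_i^{\epsilon_0}$.

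However, there is a genuine gap in the final dominated-convergence step, and it sits precisely where the compact support hypothesis must be exploited. You propose to dominate $\hat f_i^\epsilon(B_ix)$ by the Gaussian bound from Lemma \ref{l:Convexity}-(2) applied to $f_i^{\epsilon_0}$, which gives $f_i^{\epsilon_0}(y) \le C\,e^{-\frac{\lambda_i}{4}|y|^2}$ with $\lambda_i$ the smallest eigenvalue of $G_i$. That decay rate is fixed; it does not improve as $\epsilon_0 \searrow 0$. So the candidate dominant is $e^{\langle x,\mathcal{Q}x\rangle}\prod_i e^{-\frac{c_i\lambda_i}{4}|B_ix|^2}$, and there is no reason this is integrable: Lemma \ref{l:Const-Finite} only guarantees $\int e^{\langle x,\mathcal{Q}x\rangle}\prod_i g_{\Lambda_0{\rm id}}(B_ix)^{c_i}\,dx < \infty$ for some $\Lambda_0 \gg 1$, not for the fixed rate $\lambda_i/4$, which can be as small as one likes since $G_i$ is arbitrary. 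Your closing sentence — that ``for $\epsilon_0$ small enough, the resulting Gaussian integral converges'' — is not substantiated, because the bound you invoked is $\epsilon_0$-independent. The fix is available within your own framework, but requires replacing Lemma \ref{l:Convexity}-(2) by the correct observation: since $f_i$ (hence $h_i$) has compact support $K_i \subset \mathbf{B}_2^{n_i}(R_i)$, the Moreau envelope satisfies $\psi_i^{\epsilon_0}(y) \ge -\log\|h_i\|_\infty + \frac{1}{2\epsilon_0}\,d(y,K_i)^2$, so $f_i^{\epsilon_0}(y) \le C\bigl(\mathbf{1}_{\mathbf{B}_2^{n_i}(2R_i)}(y) + e^{-\frac{|y|^2}{8\epsilon_0}}\bigr)$, a dominant whose Gaussian decay rate $\sim 1/\epsilon_0$ can be made as large as Lemma \ref{l:Const-Finite} demands by shrinking $\epsilon_0$. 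This is exactly the role of the pointwise bound \eqref{e:PWBound-f_i} in the paper's Fokker--Planck proof, and without it, the argument does not close.
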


\begin{proof}

First note that we may assume \eqref{e:FiniteCond}; otherwise there is nothing to prove from Lemma \ref{l:Const=Infty}. 
For $i=1, \dots, m$ and $t>0$, we evolve $f_i$ by the $\beta$-Fokker--Planck flow: 
$$
f_i^{(t)}(x_i)
=
\gamma_{\beta(1-e^{-2t}) {\rm id}_{n_i}} \ast (e^{n_it}f_i)(e^t x_i)
=
\int_{\mathbb{R}^{n_i}} e^{ - \frac{|x_i- e^{-t}y_i|^2}{2\beta(1-e^{-2t})}} \, \frac{f_i(y_i) \,dy_i}{(2\pi\beta(1-e^{-2t}))^{n_i/2}}, 
$$
where $\beta>0$ is a fixed constant such that $\beta^{-1} {\rm id}_{n_i} \ge G_i$ for all $i=1, \dots, m$. 
Let us first note that $\int_{\R^{n_i}} f_i^{(t)}\, dx_i= \int_{\R^{n_i}} f_i\, dx_i$ and $\int_{\R^{n_i}} x_if_i^{(t)}\, dx_i=0$ for any $t>0$ by definition. 
As is well-known, the $\beta$-Fokker--Planck flow preserves the $G_i$-uniformly log-concavity as long as $\beta^{-1} {\rm id}_n \ge G_i$; see \cite[Theorem 4.3]{BLJFA}.
Thus, $f_i^{(t)}$ is $G_i$-uniformly log-concave. 
Also the Li--Yau inequality provides a gain of the log-convexity, that is, $f_i^{(t)}$ becomes $H_i(t)$-semi log-convex, and thus $f_i^{(t)} \in \mathcal{F}_{G_i, H_i(t)}^{(o)}(\R^{n_i})$, where $H_i(t) := (\beta (1-e^{-2t}))^{-1}\, {\rm id}_n$. 
Therefore we may apply Proposition \ref{p:RegIBL} to see that 
$$
\int_{\mathbb{R}^N} e^{\langle x, \mathcal{Q}x\rangle } \prod_{i=1}^m f_i^{(t)} (B_ix)^{c_i} \, dx \ge {\rm I}_{\mathbf{G},\mathbf{H}(t)}^{(\mathcal{G})}(\mathbf{B}, \mathbf{c},\mathcal{Q}) \prod_{i=1}^m \left( \int_{\mathbb{R}^{n_i}} f_i\, dx_i \right)^{c_i}. 
$$
Here we also used the mass-conservation $\int_{\mathbb{R}^{n_i}} f_i^{(t)}\, dx_i = \int_{\mathbb{R}^{n_i}} f_i\, dx_i$ on the right-hand side. 

It thus suffices to ensure the change of the order of $\lim_{t\to 0}$ and the integration: 
\begin{equation}\label{e:ChangeLimInt'}
    \lim_{t \to 0} 
    \int_{\mathbb{R}^N} e^{\langle x, \mathcal{Q}x\rangle } \prod_{i=1}^m f_i^{(t)}(B_ix)^{c_i}  \, dx
    = 
    \int_{\mathbb{R}^N} e^{\langle x, \mathcal{Q}x\rangle } \prod_{i=1}^m f_i(B_ix)^{c_i}\, dx,
\end{equation}
and 
\begin{equation}\label{e:ChangeLimGaussconst}
    \lim_{t \to 0} 
    {\rm I}_{\mathbf{G},\mathbf{H}(t)}^{(\mathcal{G})}(\mathbf{B}, \mathbf{c},\mathcal{Q})
    \ge 
    {\rm I}_{\mathbf{G},\infty}^{(\mathcal{G})}(\mathbf{B}, \mathbf{c},\mathcal{Q}). 
\end{equation}
Note that since $H_i(t) \le \infty$, \eqref{e:ChangeLimGaussconst} is evident, and thus let us show \eqref{e:ChangeLimInt'}. 
To see this, let $\Lambda (t) := e^{-2t}(\beta (1-e^{-2t}))^{-1}$. 
Note that $t\to 0$ is equivalent to $\Lambda(t) \to \infty$. 
Moreover changing of variables yields that 
$$
\int_{\mathbb{R}^N} e^{\langle x, \mathcal{Q}x\rangle } \prod_{i=1}^m f_i^{(t)}(B_ix)^{c_i}  \, dx
    =
    e^{-Nt + t\sum_{i=1}^m c_i n_i}
    \int_{\mathbb{R}^N} e^{e^{-2t}\langle x, \mathcal{Q}x\rangle } \prod_{i=1}^m (f_i)_{\Lambda(t)} (B_ix)^{c_i}  \, dx, 
$$
where 
$$
(f_i)_{\Lambda }(x_i)
=
\frac{1}{(2\pi/\Lambda)^{n_i/2}} \int_{\mathbb{R}^{n_i}} e^{ - \frac{\Lambda}2|x_i-y_i|^2} f_i(y_i)\, dy_i,\quad \Lambda>0. 
$$
Therefore it suffices to justify that 
\begin{equation}\label{e:ChangeLimInt}
    \lim_{t \to 0} 
    \int_{\mathbb{R}^N} e^{e^{-2t}\langle x, \mathcal{Q}x\rangle } \prod_{i=1}^m (f_i)_{\Lambda(t)}(B_ix)^{c_i}  \, dx
    = 
    \int_{\mathbb{R}^N} e^{\langle x, \mathcal{Q}x\rangle } \prod_{i=1}^m f_i(B_ix)^{c_i}\, dx. 
\end{equation}

Since $f_i$ is compactly supported, we may assume that ${\rm supp}\, f_i \subset \mathbf{B}^{n_i}_2(R_{f_i})$ for some $R_{f_i}>0$. 
We then claim that 
\begin{equation}\label{e:PWBound-f_i}
    (f_i)_\Lambda(x_i)
    \le 
    C_{{f_i},n_i} \big( \mathbf{1}_{\mathbf{B}^{n_i}_2(10R_{f_i})}(x_i) + e^{-c\Lambda_0 |x_i|^2} \big)
    =: 
    D_{f_i,\Lambda_0}(x_i), \;\;\; i=1, \dots, m
\end{equation}
for some numerical constant $c>0$ and any $\Lambda,\Lambda_0>0$ such that $\Lambda_0 \le \Lambda$. 
Once we could confirm this claim, we may conclude the proof as follows. 
In view of \eqref{e:FiniteCond}, we may apply Lemma \ref{l:Const-Finite} to ensure the existence of some small $t_0$ (in which case $\Lambda(t_0)$ becomes large enough) such that 
$$
\int_{\mathbb{R}^N} e^{ \langle x, \mathcal{Q}_+ x\rangle -e^{-2t_0} \langle x, \mathcal{Q}_- x\rangle } \prod_{i=1}^m D_{f_i,\Lambda(t_0)}(B_ix)^{c_i}\, dx <\infty.  
$$
If we choose $\Lambda_0 := \Lambda(t_0)$ then $\Lambda_0 \le \Lambda(t)$ for any $t\le t_0$. So, together with \eqref{e:PWBound-f_i}, we see that for any $t\le t_0$, 
$$
e^{e^{-2t}\langle x,\mathcal{Q}x\rangle} \prod_{i=1}^m  (f_i)_{\Lambda(t)}(B_ix)^{c_i}
\le 
e^{ \langle x, \mathcal{Q}_+ x\rangle -e^{-2t_0} \langle x, \mathcal{Q}_- x\rangle } \prod_{i=1}^m D_{f_i,\Lambda(t_0)}(B_ix)^{c_i} \in L^1(\R^N). 
$$
This justifies the application of the Lebesgue convergence theorem to conclude \eqref{e:ChangeLimInt}. 

To prove \eqref{e:PWBound-f_i}, we notice from Lemma \ref{l:Fradelizi} that $\| f_i\|_\infty \le e^n f_i(0)$ and thus 
\begin{align*}
(f_i)_{\Lambda}(x_i)
&\le 
e^n f_i(0) 
\frac{1}{(2\pi/\Lambda)^{n_i/2}} \int_{\mathbb{R}^{n_i}} e^{ - \frac{\Lambda}2|x_i-y_i|^2} \mathbf{1}_{ \mathbf{B}^{n_i}_2(R_{f_i}) }(y_i)\, dy_i \\ 
& =
e^n f_i(0) 
\frac{1}{(2\pi)^{n_i/2}} \int_{\mathbb{R}^{n_i}} e^{ - \frac{1}2|y_i|^2} \mathbf{1}_{ \mathbf{B}^{n_i}_2(R_{f_i}) }(x_i-\frac1{\sqrt{\Lambda}}y_i)\, dy_i \\
&= 
\frac{e^n f_i(0) }{(2\pi)^{n_i/2}}
\bigg(
\int_{|y_i|\le \frac{\sqrt{\Lambda}}{10}|x_i|} + \int_{|y_i|\ge \frac{\sqrt{\Lambda}}{10} |x_i|} 
e^{ - \frac{1}2|y_i|^2} \mathbf{1}_{ \mathbf{B}^{n_i}_2(R_{f_i}) }(x_i-\frac1{\sqrt{\Lambda}}y_i)\, dy_i
\bigg). 
\end{align*}
For the first term, notice that 
$$
|y_i|\le \frac{\sqrt{\Lambda}}{10}|x_i|
\quad \Rightarrow 
\quad 
\big|x_i-\frac{1}{\sqrt{\Lambda}} y_i\big|
\ge 
\big||x_i|-\frac{1}{\sqrt{\Lambda}} |y_i|\big|
\ge \frac{9}{10} |x_i|, 
$$
from which we derive the desired bound for the first term:
$$
\int_{|y_i|\le \frac{\sqrt{\Lambda}}{10}|x_i|} 
e^{ - \frac{1}2|y_i|^2} \mathbf{1}_{ \mathbf{B}^{n_i}_2(R_{f_i}) }(x_i-\frac1{\sqrt{\Lambda}}y_i)\, dy_i
\le 
\mathbf{1}_{\mathbf{B}^{n_i}_2( R_{f_i})}(\frac9{10}x_i) \int_{\mathbb{R}^{n_i}} e^{-\frac12|y_i|^2}\, dy_i. 
$$
For the second term, in view of the asymptotic estimate $\int_{K}^\infty e^{-\frac12 t^2}\, dt \sim c \frac{1}{K}e^{- \frac12 K^2}$ as $K\to \infty$, 
\begin{align*}
 	\int_{|y_i|\ge \frac{\sqrt{\Lambda}}{10}|x_i|}
e^{-\frac12|y_i|^2} \mathbf{1}_{\mathbf{B}^{n_i}_2(R_{f_i})}(x_i - \frac1{\sqrt{\Lambda}}y_i)\, dy_i  
&\le C
e^{-c \Lambda |x_i|^2}. 
\end{align*}
These two  estimates yield \eqref{e:PWBound-f_i} and thus we obtain \eqref{e:ChangeLimInt}. 
Our proof is complete. 
\end{proof}

The next task is to get rid of the compact support condition. 
For this purpose, we need some approximating argument. 
More precisely,  we expect that the following property would be true: given any $h \in \mathcal{F}^{(o)}_{G,\infty}(\R^n)$ for some $G\ge0$, there exists $(h_k)_k \subset \mathcal{F}^{(o)}_{G,\infty}(\R^n)$ such that (i) each $h_k$ is compactly supported (ii) $\lim_{k\to\infty} h_k(x) = h(x)$ $dx$-a.e.  $x\in \R^n$, and (iii) there exists a dominating function $D \in L^1_+(\R^n)$ of $h_k$. 
Although this is a fairly reasonable statement, we could not find any literature nor prove this. 
As an alternative statement, we may obtain the following property. 
\begin{proposition}\label{l:CenterApprox}
    Let $G\ge 0$ on $\R^n$ and $h \in \mathcal{F}^{(o)}_{G,\infty}(\R^n)$. 
    We also take $\varepsilon>0$. 
    Then there exist a sequence $ (h_k^{(\varepsilon)})_{k=1}^\infty \subset \mathcal{F}^{(o)}_{G,\infty}(\R^n)$  such that (i) each $h_k^{(\varepsilon)}$ has a compact support, (ii) 
    \begin{equation}\label{e:Approx1}
        \lim_{k \to \infty} h_k^{(\varepsilon)}(x) = h((1 + \varepsilon)x), \;\;\; \text{$dx$-a.e. $x \in \R^n$}, 
    \end{equation}
    and (iii) $\exists R_0 = R_0(\varepsilon,h)\gg1$ s.t. 
    \begin{equation}\label{e:Approx2}
        k\ge R_0\; \Rightarrow\; h_k(x) \le \big( 2 e^n\big)^\varepsilon  h(x), \;\;\;  x \in \R^n, \;\;\;   k \in \mathbb{N}. 
    \end{equation}
\end{proposition}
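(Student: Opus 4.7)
The plan is to construct $h_k^{(\varepsilon)}$ in three moves---rescale $h$ by $1+\varepsilon$, truncate to the ball of radius $k$, then translate by the barycenter to restore centering. The initial dilation is crucial: it supplies the extra power $h(x)^{\varepsilon}$ in the log-concavity split below, which absorbs Lemma~\ref{l:Fradelizi} into the explicit constant $(2e^{n})^{\varepsilon}$.

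Concretely, set $\rho(y):=h((1+\varepsilon)y)$, so that $\rho\in\mathcal{F}^{(o)}_{(1+\varepsilon)^{2}G,\infty}(\R^{n})\subset\mathcal{F}^{(o)}_{G,\infty}(\R^{n})$. Let $\tilde{h}_k(y):=\rho(y)\mathbf{1}_{B(0,k)}(y)$; this is compactly supported and $(1+\varepsilon)^{2}G$-uniformly log-concave (multiplication by the indicator of a convex set preserves uniform log-concavity), and for $k$ large $\int\tilde{h}_k\,dy>0$, so the barycenter $b_k:=(\int\tilde{h}_k)^{-1}\int y\,\tilde{h}_k(y)\,dy$ is well-defined. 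Dominated convergence---using that $|y|\rho(y)\in L^{1}$ since integrable log-concave functions have finite first moment, together with the centering of $\rho$---gives $b_k\to 0$. Put
\[
h_k^{(\varepsilon)}(x):=\tilde{h}_k(x+b_k)=\rho(x+b_k)\mathbf{1}_{B(0,k)}(x+b_k).
\]
Then $h_k^{(\varepsilon)}$ has compact support and is $G$-uniformly log-concave (translation preserves uniform log-concavity), while $\int x\,h_k^{(\varepsilon)}(x)\,dx=0$ by the defining property of $b_k$; thus $h_k^{(\varepsilon)}\in\mathcal{F}^{(o)}_{G,\infty}(\R^{n})$. The pointwise convergence \eqref{e:Approx1} follows for a.e.\ $x$ from $b_k\to 0$, continuity of $\rho$ on the interior of its support (a set of full measure), and $\mathbf{1}_{B(0,k)}(x+b_k)\to 1$.

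For the bound \eqref{e:Approx2}, let $d_k:=(1+\varepsilon)b_k$, so $h_k^{(\varepsilon)}(x)\le\rho(x+b_k)=h((1+\varepsilon)x+d_k)$. The convex decomposition
\[
x=\tfrac{1}{1+\varepsilon}\bigl((1+\varepsilon)x+d_k\bigr)+\tfrac{\varepsilon}{1+\varepsilon}\bigl(-d_k/\varepsilon\bigr)
\]
and the log-concavity of $h$ yield $h(x)^{1+\varepsilon}\ge h((1+\varepsilon)x+d_k)\cdot h(-d_k/\varepsilon)^{\varepsilon}$, whence
\[
h((1+\varepsilon)x+d_k)\le h(x)\cdot h(x)^{\varepsilon}\cdot h(-d_k/\varepsilon)^{-\varepsilon}.
\]
Since $h$ is centered and log-concave, the origin lies in the interior of $\mathrm{supp}(h)$ (by the Hahn--Banach argument used in the proof of Lemma~\ref{l:Const=Infty}), so $h(0)>0$ and $h$ is continuous at $0$; pick $R_0=R_0(\varepsilon,h)$ large enough that $k\ge R_0$ forces $h(-d_k/\varepsilon)\ge h(0)/2$. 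Lemma~\ref{l:Fradelizi} then gives $h(x)^{\varepsilon}\le (e^{n}h(0))^{\varepsilon}$, and the three factors combine to $(2e^{n})^{\varepsilon}h(x)$. The degenerate case $h(x)=0$ is handled by the same log-concavity inequality, which forces $h((1+\varepsilon)x+d_k)=0$ as long as $h(-d_k/\varepsilon)>0$, so the bound is trivial there.

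I expect the main subtlety to be the interplay between centering and the pointwise bound: truncation destroys centering, the translation chosen to restore it then jeopardises the pointwise domination of $h_k^{(\varepsilon)}$ by a constant multiple of $h$, and since $h$ is only centered (not even) no symmetry argument can cancel the shift. The role of the $(1+\varepsilon)$-dilation is precisely to manufacture the slack in the log-concavity split that converts Fradelizi's inequality into an explicit $k$-independent dominating constant.
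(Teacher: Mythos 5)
Your construction is, after a trivial reparametrisation, identical to the paper's: the paper first truncates $h$ to $h_R=h\,\mathbf{1}_{R\mathbf{B}_2^n}$, computes the barycenter $\xi_R$ of $h_R$, and sets $h_R^{(\varepsilon)}(x)=h_R((1+\varepsilon)x+\xi_R)$; with $R=(1+\varepsilon)k$ and $\xi_R=(1+\varepsilon)b_k$ this is exactly your $h_k^{(\varepsilon)}$, and both proofs of the pointwise bound run the same log-concavity split through the point $-\varepsilon^{-1}\xi_R$ (your $-d_k/\varepsilon$) plus Fradelizi's $\|h\|_\infty\le e^n h(0)$. Your proof is correct and takes essentially the same approach as the paper.
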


\begin{proof}
    For $R>0$, put 
    $$
    h_R(x) := h(x) \mathbf{1}_{R\mathbf{B}_2^n}(x), \;\;\; x \in \R^n
    $$
    and  
    $$
    \xi_R := \frac{\int_{\R^n} x h_R(x)\, dx}{\int_{\R^n} h_R(x)\, dx}.  
    $$
    Note that $\int_{\R^n} h_R\, dx>0$ since $h$ is positive near the origin, and so $\xi_R$ is well-defined. 
    Then the function 
    $$
    h_R^{(\varepsilon)}:= h_R((1+\varepsilon) x + \xi_R) 
    $$
    is centered and compactly supported function. 
    Moreover, it is $(1+\varepsilon)^2G$-uniformly log-concave, and so $G$-uniformly log-concave.  
    We also notice that $\lim_{R \to \infty} \xi_R =0$ from the Lebesgue convergence theorem with the dominating function $|x|h \in L^1(\R^n)$. 
    Now let $U=U_h$ be a small bounded neighborhood around $0$ with $\inf_{x \in U} h(x) \ge \frac12 h(0)$; see the proof of \eqref{e:Continuity-f_i} for this property. 
    This $U$ depends only on $h$, and so, in view of $\lim_{R\to \infty} \xi_R = 0$, we may find $R_0 = R_0(\varepsilon,h) > 0$ such that $- \varepsilon^{-1} \xi_R \in U \subset R \mathbf{B}_2^n$ for any $R \ge R_0$. 
    In particular, we have that
    $$
    h(-\varepsilon^{-1}\xi_R) \ge \frac12h(0),\quad \forall R\ge R_0, 
    $$
    and thus the log-concavity of $h$ yields that 
    $$
    h(x)^{1 + \varepsilon} \ge h( (1+\varepsilon) x + \xi_R) h(-\varepsilon^{-1} \xi_R)^\varepsilon \ge \big(\frac12 h(0) \big)^\varepsilon h((1+\varepsilon)x + \xi_R)
    $$
    for $x \in \R^n$ and $R \ge R_0$. 
    On the other hand, Lemma \ref{l:Fradelizi} yields that $h(x)^{1+\varepsilon} \le \big(e^n h(0)\big)^\varepsilon h(x) $. 
    Therefore, in view of $h(0)>0$ from Lemma \ref{l:Fradelizi}, we may see that  $h((1+\varepsilon)x+\xi_R) \le \big( 2 e^n\big)^\varepsilon h(x)$ for all $x \in \R^n$ and $R \ge R_0$.  
    This means that 
    $$
    h_R^{(\varepsilon)}(x) = h((1+\varepsilon)x + \xi_R) \mathbf{1}_{R \mathbf{B}_2^n}((1+\varepsilon)x + \xi_R) \le \big( 2 e^n\big)^\varepsilon h(x), \;\;\; \forall x \in \R^n, \;\;\; \forall R \ge R_0. 
    $$
    Finally we claim that 
    $$
    \lim_{R\to\infty}h_R^{(\varepsilon)}(x)
    = 
    h((1+\varepsilon) x),\quad{\rm whenever}\quad (1+\varepsilon)x\in \R^n\setminus \partial \big(  {\rm supp}\, h \big),
    $$ 
    that concludes the proof. 
    To see this claim, suppose $(1+\varepsilon)x\in {\rm int} \big( {\rm supp}\, h \big)$ in which case we may find a larger $R_0' \ge R_0$ so that $R\ge R_0' \Rightarrow $ $(1+\varepsilon)x + \xi_R \in {\rm int} \big( {\rm supp}\, h \big)$. 
    We then make use of the fact that $h$ is continuous on the interior of its support to conclude the desired property. 
    If $(1+\varepsilon)x\notin  {\rm supp}\, h$ then, as above, we may find a larger $R_0'' \ge R_0$ so that $R\ge R_0'' \Rightarrow $ $(1+\varepsilon)x + \xi_R \notin  {\rm supp}\, h $. 
    Thus, 
    $$
    \lim_{R\to\infty}h_R^{(\varepsilon)}(x)
    =
    0
    = 
    h((1+\varepsilon) x).  
    $$
    Since $| \partial \big( {\rm supp}\, h \big) |=0$, this conclude the proof. 
\end{proof}

As a corollary, we may remove the condition that each $f_i$ is compactly supported in Proposition \ref{p:IBL-CompactSupp}.
\begin{corollary}\label{Cor:IBL-G_i>0}
Let $(\mathbf{B},\mathbf{c},\mathcal{Q})$ be arbitrary Brascamp--Lieb datum and $0< G_i < \infty$ for each $i=1,\ldots,m$. 
    Then for any $f_i \in \mathcal{F}^{(o)}_{G_i,\infty}(\R^{n_i})$, we have that 
    $$
    \int_{\mathbb{R}^N} e^{\langle x,\mathcal{Q}x\rangle} 
    \prod_{i=1}^m f_i(B_ix)^{c_i}\, dx 
    \ge 
    {\rm I}^{(\mathcal{G})}_{\mathbf{G}}(\mathbf{B},\mathbf{c}, \mathcal{Q}) 
    \prod_{i=1}^m \big( \int_{\mathbb{R}^{n_i}} f_i\, dx_i \big)^{c_i}. 
    $$
\end{corollary}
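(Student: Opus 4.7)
The plan is to derive Corollary \ref{Cor:IBL-G_i>0} from Proposition \ref{p:IBL-CompactSupp} by applying the approximation scheme of Proposition \ref{l:CenterApprox} and carrying out two nested limits. Throughout I may assume that the left-hand side of the inequality to be proved is finite, for otherwise the conclusion is trivial.

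First, I fix $\varepsilon \in (0,1)$ and, for each $i=1,\ldots,m$, invoke Proposition \ref{l:CenterApprox} to obtain a sequence $((f_i)_k^{(\varepsilon)})_{k} \subset \mathcal{F}_{G_i,\infty}^{(o)}(\R^{n_i})$ of compactly supported functions satisfying $(f_i)_k^{(\varepsilon)}(x_i) \to f_i((1+\varepsilon)x_i)$ pointwise a.e.~together with the eventual domination $(f_i)_k^{(\varepsilon)} \le (2e^{n_i})^\varepsilon f_i$. Applying Proposition \ref{p:IBL-CompactSupp} to the compactly supported tuple $((f_i)_k^{(\varepsilon)})_{i=1}^m$ gives
\begin{equation*}
\int_{\R^N} e^{\langle x, \mathcal{Q}x\rangle} \prod_{i=1}^m (f_i)_k^{(\varepsilon)}(B_ix)^{c_i}\, dx \ge {\rm I}_{\mathbf{G}}^{(\mathcal{G})}(\mathbf{B}, \mathbf{c}, \mathcal{Q}) \prod_{i=1}^m \Bigl(\int_{\R^{n_i}} (f_i)_k^{(\varepsilon)}\, dx_i\Bigr)^{c_i}.
\end{equation*}
I then let $k\to\infty$: the integrable dominator $(2e^{n_i})^{\varepsilon \sum c_i}\, e^{\langle x, \mathcal{Q}x\rangle}\prod_i f_i(B_ix)^{c_i}$ (integrable by the initial assumption) justifies dominated convergence for the left-hand side, while $\int_{\R^{n_i}} (f_i)_k^{(\varepsilon)}\, dx_i \to (1+\varepsilon)^{-n_i}\int_{\R^{n_i}} f_i\, dx_i$ on the right-hand side, yielding
\begin{equation*}
\int_{\R^N} e^{\langle x, \mathcal{Q}x\rangle} \prod_{i=1}^m f_i((1+\varepsilon)B_ix)^{c_i}\, dx \ge {\rm I}_{\mathbf{G}}^{(\mathcal{G})}(\mathbf{B},\mathbf{c},\mathcal{Q})\,(1+\varepsilon)^{-\sum_{i=1}^m c_i n_i}\prod_{i=1}^m\Bigl(\int_{\R^{n_i}} f_i\, dx_i\Bigr)^{c_i}.
\end{equation*}

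The main obstacle is the final passage $\varepsilon \to 0^+$. Pointwise convergence of the left-hand integrand to $e^{\langle x, \mathcal{Q}x\rangle}\prod_{i=1}^m f_i(B_ix)^{c_i}$ at points of continuity of the $f_i$ is clear, but an $\varepsilon$-independent integrable dominator is nontrivial because the dilation $x_i \mapsto (1+\varepsilon)x_i$ transports mass and $f_i((1+\varepsilon)x_i)$ cannot be cheaply bounded by $f_i(x_i)$. Here the centering assumption enters decisively via Fradelizi's lemma (Lemma \ref{l:Fradelizi}). Writing $x_i = \tfrac{1}{1+\varepsilon}\bigl((1+\varepsilon)x_i\bigr) + \tfrac{\varepsilon}{1+\varepsilon} \cdot 0$, the log-concavity of $f_i$ gives $f_i(x_i)^{1+\varepsilon} \ge f_i((1+\varepsilon)x_i)\,f_i(0)^\varepsilon$, and combining with the Fradelizi estimate $f_i(x_i) \le e^{n_i} f_i(0)$ produces the uniform pointwise bound
\begin{equation*}
f_i((1+\varepsilon)x_i) \le e^{n_i \varepsilon} f_i(x_i) \le e^{n_i} f_i(x_i), \qquad x_i \in \R^{n_i},\ \varepsilon \in [0,1].
\end{equation*}
This furnishes an integrable dominator for the left-hand integrand that is uniform in $\varepsilon \in [0,1]$, so dominated convergence applies once more; since $(1+\varepsilon)^{-\sum_i c_i n_i} \to 1$ on the right-hand side, sending $\varepsilon \to 0^+$ yields exactly the inequality claimed in the corollary.
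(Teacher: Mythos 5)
Your proof is correct and follows essentially the same route as the paper: reduce to the compactly supported case (Proposition~\ref{p:IBL-CompactSupp}) via the dilation-recentering approximation of Proposition~\ref{l:CenterApprox}, pass to the limit in $k$, and then pass to the limit in $\varepsilon$ using the pointwise bound $f_i((1+\varepsilon)x_i)\le e^{\varepsilon n_i}f_i(x_i)$ derived from log-concavity and Fradelizi's Lemma~\ref{l:Fradelizi}. The only superficial differences are that the paper invokes Fatou for the right-hand side in the $k\to\infty$ step where you use dominated convergence, and in the $\varepsilon\to0$ step the paper uses the same pointwise bound directly as a one-sided upper estimate on the integral rather than as a uniform dominator feeding into dominated convergence; both variants are valid and interchangeable.
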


\begin{proof}
    Without loss of generality, we may suppose that 
    \begin{equation}\label{e:BLFunc-Finite}
        \int_{\R^N} e^{\langle x, \mathcal{Q}x\rangle} \prod_{i=1}^m f_{i}(B_ix)^{c_i}\, dx < +\infty.
    \end{equation}
    With this in mind, we take arbitrary small $\varepsilon$ that tends to 0 in the end, and apply Proposition \ref{l:CenterApprox} to find $(f_{i,k}^{(\varepsilon)})_{k=1}^\infty$ satisfying \eqref{e:Approx1} and \eqref{e:Approx2} for $f_i$. 
    For each $k$, $f_{i,k}^{(\varepsilon)}$ satisfies assumptions of Proposition \ref{p:IBL-CompactSupp}, and hence we obtain that 
    $$
    \int_{\R^N} e^{\langle x, \mathcal{Q}x\rangle} \prod_{i=1}^m f_{i,k}^{(\varepsilon)}(B_ix)^{c_i}\, dx 
    \ge 
    {\rm I}_{\mathbf{G},\infty}^{(\mathcal{G})} (\mathbf{B},\mathbf{c},\mathcal{Q}) \prod_{i=1}^m \big( \int_{\mathbb{R}^{n_i}} f_{i,k}^{(\varepsilon)}\, dx_i \big)^{c_i}. 
    $$
    We will then take the limit $k\to\infty$. 
    For the right-hand side, we may simply apply Fatou's lemma together with \eqref{e:Approx1} to see that  
    \begin{align*}
        \int_{\R^{n_i}} 
        f_i((1+\varepsilon)x_i)\, dx_i 
        &\le 
        \liminf_{k\to\infty}
        \int_{\R^{n_i}} 
        f_{i,k}^{(\varepsilon)}(x_i)\, dx_i.  
    \end{align*}
    For the left-hand side, we know from \eqref{e:Approx2} and \eqref{e:BLFunc-Finite} that 
    $$
    e^{\langle x,\mathcal{Q}x\rangle} 
    \prod_{i=1}^m f_{i,k}^{(\varepsilon)}(B_ix)^{c_i} 
    \le 
    C_{\varepsilon,\mathbf{n}} 
    e^{\langle x,\mathcal{Q}x\rangle} 
    \prod_{i=1}^m f_{i}(B_ix)^{c_i} \in L^1(\R^N)
    $$
    for some constant $C_{\varepsilon, {\bf n}}>0$, where ${\bf n}:=(n_1, \dots, n_m)$. 
    Therefore, we may apply the Lebesgue convergence theorem to confirm that 
    $$
    \lim_{k\to\infty}
    \int_{\R^N} e^{\langle x, \mathcal{Q}x\rangle} \prod_{i=1}^m f_{i,k}^{(\varepsilon)}(B_ix)^{c_i}\, dx 
    =
    \int_{\R^n} e^{\langle x, \mathcal{Q}x\rangle} \prod_{i=1}^m f_{i}((1+\varepsilon)B_ix)^{c_i}\, dx. 
    $$
    Overall, after taking the limit $k\to\infty$, we obtain 
    $$
    \int_{\R^N} e^{\langle x, \mathcal{Q}x\rangle} \prod_{i=1}^m f_{i}((1+\varepsilon)B_ix)^{c_i}\, dx
    \ge 
    {\rm I}_{\mathbf{G},\infty}^{(\mathcal{G})} (\mathbf{B},\mathbf{c},\mathcal{Q}) \prod_{i=1}^m \left( \int_{\mathbb{R}^{n_i}} f_{i}((1+\varepsilon)x_i)\, dx_i \right)^{c_i}. 
    $$
    We then finally take another limit $\varepsilon\to0$. 
    For the right-hand side, it is easy to see that 
    $$
    \int_{\mathbb{R}^{n_i}} f_{i}((1+\varepsilon)x_i)\, dx_i 
    = 
    (1+\varepsilon)^{-n_i}
    \int_{\mathbb{R}^{n_i}} f_{i}(x_i)\, dx_i 
    \to 
    \int_{\mathbb{R}^{n_i}} f_{i}(x_i)\, dx_i.  
    $$
    For the left-hand side, we may run the same argument as in the proof of Proposition \ref{l:CenterApprox}. 
    That is, the log-concavity of $f_i$ implies that 
    $$
    f_i(x_i)^{1 + \varepsilon} 
    \ge f_i( (1+\varepsilon) x_i ) f_i(0)^\varepsilon, 
    $$
    while Lemma \ref{l:Fradelizi} yields that $f_i(x_i)^{1+\varepsilon} \le \big(e^{n_i} f_i(0)\big)^\varepsilon f_i(x_i) $. 
    In view of $f_i(0)>0$ from Lemma \ref{l:Fradelizi}, we may see that  $f_i((1+\varepsilon)x_i) \le e^{\varepsilon n_i} f_i(x_i)$ from which 
    \begin{align*}
    \int_{\R^N} e^{\langle x, \mathcal{Q}x\rangle} \prod_{i=1}^m f_{i}((1+\varepsilon)B_ix)^{c_i}\, dx
    &\le 
     e^{\varepsilon \sum_{i} c_i n_i}
    \int_{\R^N} e^{\langle x, \mathcal{Q}x\rangle} \prod_{i=1}^m f_{i}(B_ix)^{c_i}\, dx \\
    &\to 
    \int_{\R^N} e^{\langle x, \mathcal{Q}x\rangle} \prod_{i=1}^m f_{i}(B_ix)^{c_i}\, dx
    \end{align*}
    as $\varepsilon\to0$. 
    Putting all together, we conclude the proof. 
\end{proof}

We finally complete the proof of Theorem \ref{t:CenteredIBL} by taking the limit $G_i\to 0$. 
For this purpose, we need to approximate a log-concave and centered function by some uniformly log-concave and centered functions. 
For this purpose we may employ the following which is similar to Proposition \ref{l:CenterApprox}. 
\begin{proposition}\label{p:GaussianApprox}
    Let $G\ge0$ and $h \in \mathcal{F}^{(o)}_{G,\infty}(\R^n)$. 
    We also take $\varepsilon>0$. 
    Then there exists a sequence $(h_k^{(\varepsilon)})_{k=1}^\infty \subset \mathcal{F}^{(o)}_{G,\infty}(\R^n)$ satisfying \eqref{e:Approx1} and \eqref{e:Approx2} such that $h_k^{(\varepsilon)}$ is $G+ \frac1k{\rm id}_n$-uniformly log-concave. 
\end{proposition}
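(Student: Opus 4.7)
The plan is to mimic the construction in Proposition \ref{l:CenterApprox}, replacing truncation by a Euclidean ball with multiplication by an isotropic Gaussian factor. The benefit is that, whereas ball-truncation only preserves log-concavity, Gaussian multiplication directly supplies the additional $\tfrac{1}{k}{\rm id}_n$-uniform log-concavity that the statement demands. Concretely, for each $k \in \mathbb{N}$ I would set
$$
\widetilde{h}_k(y) := h(y)\, e^{-|y|^2/(2k)}, \qquad \xi_k := \frac{\int_{\R^n} y\, \widetilde{h}_k(y)\, dy}{\int_{\R^n} \widetilde{h}_k(y)\, dy}, \qquad h_k^{(\varepsilon)}(x) := \widetilde{h}_k\big((1+\varepsilon)x + \xi_k\big).
$$
Since $h(0)>0$ by Lemma \ref{l:Fradelizi}, the denominator of $\xi_k$ is positive and $\xi_k$ is well-defined; a direct change of variables then gives $\int x\, h_k^{(\varepsilon)}(x)\, dx = 0$, so $h_k^{(\varepsilon)}$ is centered. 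By the chain rule, in the distributional sense,
$$
{\rm Hess}\big(-\log h_k^{(\varepsilon)}\big)(x) = (1+\varepsilon)^2\, {\rm Hess}(-\log h)\big|_{(1+\varepsilon)x+\xi_k} + \tfrac{(1+\varepsilon)^2}{k}{\rm id}_n \ge G + \tfrac{1}{k}{\rm id}_n,
$$
where the inequality uses $\varepsilon \ge 0$ and the $G$-uniform log-concavity of $h$. Hence $h_k^{(\varepsilon)}$ is $(G + \tfrac{1}{k}{\rm id}_n)$-uniformly log-concave and in particular lies in $\mathcal{F}^{(o)}_{G,\infty}(\R^n)$.

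For the limits, I would first argue $\xi_k \to 0$ by dominated convergence: the vector-valued numerator integrand is bounded in norm by $|y| h(y)$, which is integrable because $h$ is integrable and log-concave, while the denominator tends to $\int h > 0$. Once $\xi_k \to 0$ is in hand, dropping the Gaussian factor (which is $\le 1$) gives $h_k^{(\varepsilon)}(x) \le h((1+\varepsilon)x + \xi_k)$, and the verification of \eqref{e:Approx2} is then identical to the last steps of Proposition \ref{l:CenterApprox}: the log-concavity of $h$ combined with Fradelizi's bound (Lemma \ref{l:Fradelizi}) applied at $-\varepsilon^{-1}\xi_k$, which lies in the neighborhood of $0$ where $h \ge \tfrac{1}{2} h(0)$ as soon as $k$ is large, yields $h((1+\varepsilon)x + \xi_k) \le (2e^n)^\varepsilon h(x)$. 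The pointwise convergence \eqref{e:Approx1} then follows from $\xi_k \to 0$, the fact that the Gaussian factor tends to $1$, and the continuity of $h$ on the interior of its support (the topological boundary being a null set, exactly as in the proof of Proposition \ref{l:CenterApprox}).

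I do not anticipate a substantive obstacle: the Gaussian regularization is analytically smoother than the ball-truncation regularization of Proposition \ref{l:CenterApprox}, precisely because Gaussian multiplication interacts cleanly with the Hessian of $-\log h$ whereas truncation does not. The only subtlety is the convergence $\xi_k \to 0$, which rests on the finiteness of the first moment of $h$; this is automatic for integrable log-concave functions, as recorded in the excerpt.
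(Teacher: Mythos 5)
Your proposal follows essentially the same approach as the paper: the paper's proof of this proposition also replaces the ball-truncation of Proposition \ref{l:CenterApprox} by multiplication with the Gaussian factor $e^{-|x|^2/(2R)}$, recenters via the shifted and dilated variable $(1+\varepsilon)x + \xi_R$, uses dominated convergence with the dominant $|x|h$ to get $\xi_R \to 0$, bounds by dropping the (sub-unit) Gaussian weight and invoking Fradelizi's lemma as in \ref{l:CenterApprox}, and concludes pointwise convergence from continuity on the interior of the support. Your argument is correct and matches the paper's in all substantive steps.
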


\begin{proof}
    First of all, we remark that the proof here is almost same with Proposition \ref{p:IBL-CompactSupp}. 
    For $R>0$, put 
    $$
    h_R(x) := h(x) e^{-\frac1{2R}|x|^2}, \;\;\; x \in \R^n
    $$
    and  
    $$
    \xi_R := \frac{\int_{\R^n} x h_R(x)\, dx}{\int_{\R^n} h_R(x)\, dx}.  
    $$
    Note that $\int_{\R^n} h_R\, dx>0$ since $h$ is positive near the origin, and so $\xi_R$ is well-defined. 
    Then the function 
    $$
    h_R^{(\varepsilon)}:= h_R((1+\varepsilon) x + \xi_R) 
    $$
    is centered. 
    We also notice that $\lim_{R \to \infty} \xi_R =0$ from the Lebesgue convergence theorem with the dominating function $|x|h \in L^1(\R^n)$.
    Moreover, $h_R^{(\varepsilon)}$ is $(1+\varepsilon)^2G + (1+\varepsilon)^2\frac{1}{R}$-uniformly log-concave, and so $G+\frac{1}{R}$-uniformly log-concave. 
    As in the proof of Proposition \ref{p:IBL-CompactSupp}, let $U=U_h$ be a small bounded neighborhood around $0$ with $\inf_{x \in U} h(x) \ge \frac12 h(0)$. 
    This $U$ depends only on $h$, and so, in view of $\lim_{R\to \infty} \xi_R = 0$, we may find $R_0 = R_0(\varepsilon,h) > 0$ such that $- \varepsilon^{-1} \xi_R \in U \subset R \mathbf{B}_2^n$ for any $R \ge R_0$. 
    Then the same argument as in Proposition \ref{p:IBL-CompactSupp} ensures that $h((1+\varepsilon)x+\xi_R) \le \big( 2 e^n\big)^\varepsilon h(x)$ for all $x \in \R^n$ and $R \ge R_0$. 
    This means that 
    $$
    h_R^{(\varepsilon)}(x) = h((1+\varepsilon)x + \xi_R) g_{1/R}((1+\varepsilon)x + \xi_R) \le \big( 2 e^n\big)^\varepsilon h(x), \;\;\; \forall x \in \R^n, \;\;\; \forall R \ge R_0. 
    $$
    Finally, by the same argument as in Proposition \ref{p:IBL-CompactSupp}, we may conclude that 
    $$
    \lim_{R\to\infty}h_R^{(\varepsilon)}(x)
    = 
    h((1+\varepsilon) x),\quad{\rm whenever}\quad (1+\varepsilon)x\in \R^n\setminus \partial \big(  {\rm supp}\, h \big).
    $$ 
\end{proof}

\begin{proof}[Proof of Theorem \ref{t:CenteredIBL}]
   Let us prove the case that some of $G_i$ is not positive definite. 
   Let us take arbitrary $f_i \in \mathcal{F}_{{\bf G}, \infty}^{(\mathcal{G})}(\R^{n_i})$ for $i=1, \dots, m$. 
    This time, we appeal to Proposition \ref{p:GaussianApprox} to find $(f_{i,k}^{(\varepsilon)})_k$ satisfying \eqref{e:Approx1} and \eqref{e:Approx2} for $f_i$. 
    Note that each $f_{i,k}^{(\varepsilon)}$ is $G_i(k)$-uniformly log-concave where $G_i(k):=G_i + \frac1k {\rm id}_{n_i}>0$, and thus we may employ Corollary \ref{Cor:IBL-G_i>0} to see that 
    \begin{align*}
    \int_{\R^N} e^{\langle x, \mathcal{Q}x\rangle} \prod_{i=1}^m f_{i,k}^{(\varepsilon)}(B_ix)^{c_i}\, dx 
    &\ge 
    {\rm I}_{\mathbf{G}(k),\infty}^{(\mathcal{G})} (\mathbf{B},\mathbf{c},\mathcal{Q}) \prod_{i=1}^m \big( \int_{\mathbb{R}^{n_i}} f_{i,k}^{(\varepsilon)}\, dx_i \big)^{c_i}
    \\
    &\ge
    {\rm I}_{\mathbf{G},\infty}^{(\mathcal{G})} (\mathbf{B},\mathbf{c},\mathcal{Q}) \prod_{i=1}^m \big( \int_{\mathbb{R}^{n_i}} f_{i,k}^{(\varepsilon)}\, dx_i \big)^{c_i}, 
    \end{align*}
    where the second inequality follows from ${\rm I}_{\mathbf{G}(k),\infty}^{(\mathcal{G})} (\mathbf{B},\mathbf{c},\mathcal{Q}) \ge {\rm I}_{\mathbf{G},\infty}^{(\mathcal{G})} (\mathbf{B},\mathbf{c},\mathcal{Q})$ by $G_i(k) \ge G_i$. 
    By the same argument as in Corollary \ref{Cor:IBL-G_i>0}, we may take the limit $k\to\infty$ and then $\varepsilon\to0$ to observe that 
    $$
    \int_{\R^n} e^{\langle x, \mathcal{Q}x\rangle} \prod_{i=1}^m f_{i}(B_ix)^{c_i}\, dx 
    \ge 
    {\rm I}_{\mathbf{G},\infty}^{(\mathcal{G})} (\mathbf{B},\mathbf{c},\mathcal{Q}) \prod_{i=1}^m \big( \int_{\mathbb{R}^{n_i}} f\, dx_i \big)^{c_i}. 
    $$
    Since $f_i \in \mathcal{F}_{{\bf G}, \infty}^{(\mathcal{G})}(\R^{n_i})$ is arbitrary, we obtain the desired assertion. 
\end{proof}


\section{Proof of Theorems \ref{t:NonSymGCI} and \ref{t:GenCor}}\label{s:Section4}



\subsection{Proof of Theorem \ref{t:GenCor}}\label{SSec-DeriGCI}
We give a proof of Theorem \ref{t:GenCor} by using Theorem \ref{t:CenteredIBL}. 
To this end, choose the Brascamp--Lieb datum as $N=n$, $n_1=\cdots=n_m=n$, and 
 $$ 
 B_1=\cdots = B_m ={\rm id}_n,\quad 
 c_1=\cdots = c_m=1, \quad 
 \mathcal{Q} = \frac12 \sum_{i=1}^m \Sigma_i^{-1} - \frac12 \Sigma_0^{-1}. 
 $$
We then choose the regularized parameters as $G_i = \Sigma_i^{-1}$ for $i=1,\ldots, m$. 
Because of the convexity and the centering assumption of $K_i$, we know that $f_i(x_i) := \mathbf{1}_{K_i}(x_i) e^{-\frac12 \langle \Sigma_i^{-1} x_i, x_i \rangle}\in \mathcal{F}^{(o)}_{G_i,\infty}$. 
With this choice, we have that 
$$
\gamma_{\Sigma_0} \left( \bigcap_{i=1}^m K_i \right) 
= 
\big( {\rm det} (2\pi \Sigma_0) \big)^{-\frac12}
\int_{\R^n}
e^{\langle x, \mathcal{Q}x\rangle} 
\prod_{i=1}^m 
f_i(B_ix)^{c_i} \, dx. 
$$
Therefore, Theorem \ref{t:CenteredIBL} yields that 
$$
\gamma_{\Sigma_0} \left( \bigcap_{i=1}^m K_i \right) 
    \ge
    \frac{ \prod_{i=1}^m ({\rm det} (2\pi \Sigma_i))^\frac12}{ ({\rm det} (2\pi \Sigma_0))^\frac12}
    {\rm I}_{{\bf G}, \infty}^{(\mathcal{G})} ({\bf B}, {\bf c}, \mathcal{Q})
    \prod_{i=1}^m \gamma_{\Sigma_i}(K_i), 
 $$
where the constant ${\rm I}_{{\bf G}, \infty}^{(\mathcal{G})} ({\bf B}, {\bf c}, \mathcal{Q})$ is explicitly given by 
$$
{\rm I}_{{\bf G}, \infty}^{(\mathcal{G})} ({\bf B}, {\bf c}, \mathcal{Q})^2
=
(2\pi)^{- n(m-1)} \inf_{A_i \ge \Sigma_i^{-1}} \frac{ \prod_{i=1}^m {\rm det}(A_i) }{ {\rm det}(\sum_{i=1}^m (A_i - \Sigma_i^{-1}) + \Sigma_0^{-1} )}.
$$
Thus the proof would be completed by showing the following: 
\begin{lemma}\label{l:GenGaussConst}
 Let $\Sigma_0, \Sigma_1, \dots, \Sigma_m \in \mathbb{R}^{n\times n}$ be positive symmetric matrices with $\Sigma_0^{-1} \ge \Sigma_i^{-1}$ for $i=1, \dots, m$. Then for any $A_1, \dots, A_m \in \mathbb{R}^{n\times n}$ with $A_i \ge \Sigma_i^{-1}$ for $i=1, \dots, m$, it holds that 
$$
 \frac{ \prod_{i=1}^m {\rm det}(A_i) }{ {\rm det}(\sum_{i=1}^m (A_i - \Sigma_i^{-1}) + \Sigma_0^{-1} )}
 \ge
 \frac{ \prod_{i=1}^m {\rm det}(\Sigma_i^{-1}) }{ {\rm det}( \Sigma_0^{-1} )}.
$$
    
\end{lemma}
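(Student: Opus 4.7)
\textbf{Proof plan for Lemma \ref{l:GenGaussConst}.} The plan is to parametrize the perturbation by a single scalar $t \in [0,1]$ and show the resulting function is monotone, using the key hypothesis $\Sigma_0^{-1} \ge \Sigma_i^{-1}$ at exactly one point.

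To set notation, write $C := \Sigma_0^{-1}$ and $D_i := \Sigma_i^{-1}$, so the hypothesis reads $C \ge D_i$ for every $i$. Set $B_i := A_i - D_i$; by assumption $B_i \ge 0$. Let $S := \sum_{i=1}^m B_i$. The inequality to prove becomes
\[
\frac{\prod_{i=1}^m \det(D_i+B_i)}{\det(C+S)} \ge \frac{\prod_{i=1}^m \det(D_i)}{\det(C)}.
\]
To establish this, I would introduce the one-parameter family
\[
\phi(t) := \sum_{i=1}^m \log\det(D_i + tB_i) - \log\det\Bigl(C + t\sum_{j=1}^m B_j\Bigr), \qquad t \in [0,1],
\]
and show $\phi(1) \ge \phi(0)$ by proving $\phi'(t)\ge 0$ for every $t \ge 0$.

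The derivative is
\[
\phi'(t) = \sum_{i=1}^m \operatorname{tr}\!\Bigl( B_i \bigl[ (D_i + tB_i)^{-1} - (C + tS)^{-1}\bigr]\Bigr).
\]
Here comes the one crucial use of the hypothesis: since $C \ge D_i$ and $t\sum_{j \ne i} B_j \ge 0$, one has
\[
D_i + tB_i \;\le\; C + tS,
\]
and therefore $(D_i + tB_i)^{-1} \ge (C + tS)^{-1}$ by operator monotonicity of inversion on positive definite matrices. Since $B_i \ge 0$, the trace of the product of these two positive semidefinite matrices is nonnegative, so every summand is $\ge 0$ and thus $\phi'(t) \ge 0$. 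Integrating from $0$ to $1$ yields $\phi(1) \ge \phi(0)$, which is the lemma.

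I do not anticipate any serious obstacle: the whole argument is an exercise in monotone-operator calculus once the right linear path $t \mapsto (tB_1,\ldots,tB_m)$ is chosen. The only non-cosmetic ingredient is the hypothesis $C \ge D_i$, which is exactly what is needed to make the inequality $D_i + tB_i \le C + tS$ hold for all $t \ge 0$ (including small $t$, where the $tB_j$ terms cannot otherwise compensate). If desired, the same computation gives monotonicity of $\phi$ on all of $[0,\infty)$, a slightly stronger statement than the lemma requires.
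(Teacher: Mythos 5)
Your proof is correct, and it takes a genuinely different route from the paper's. The paper conjugates by $\Sigma_1^{1/2}$ to reduce matters to the inequality $\det(\widetilde A_1)/\det(\widetilde A_1 - \mathrm{id}_n + M) \ge 1/\det M$ for $\widetilde A_1, M \ge \mathrm{id}_n$, then diagonalizes $\widetilde A_1$ and runs a rank-one perturbation $t\mapsto \widetilde A_1 + t\,e_1\otimes e_1$ coordinate-by-coordinate, finally iterating the whole procedure over $i = 1,\dots,m$. You instead put all the perturbations on a single linear homotopy $t\mapsto (D_i + tB_i)_{i=1}^m$ and verify $\phi'(t)\ge 0$ in one stroke, with the hypothesis $C\ge D_i$ doing exactly the work of guaranteeing $D_i + tB_i \le C + tS$. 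Both proofs rest on the same underlying facts (derivative of $\log\det$, operator antitonicity of inversion, and $\mathrm{tr}(PQ)\ge 0$ for $P,Q\ge 0$), but yours avoids both the normalization/diagonalization step and the double iteration, so it is cleaner and more symmetric in the indices. One thing the paper's more granular approach buys is that the one-eigenvalue-at-a-time perturbations $\Phi_i(t)$ are reused verbatim in Section~5 (Lemma~\ref{l:EqualMatrixGCI}) to extract the equality condition $E_{\mathrm{id}}(A_1)^\perp\subset E_{\mathrm{id}}(A_2)$; your single-parameter $\phi$ would require a separate analysis of when $\phi'(t)\equiv 0$ to recover the same rigidity, which is doable but not as immediate.
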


\begin{proof}
Put $\widetilde{A_i} := \Sigma_1^{\frac12} A_i \Sigma_1^{\frac12}$ for $i=1, \dots, m$ and $\widetilde{\Sigma_i}^{-1} := \Sigma_1^{\frac12} \Sigma_i^{-1} \Sigma_1^{\frac12}$ for $i=0, 2, \dots, m$.
    Then 
    $$
    \frac{ \prod_{i=1}^m {\rm det}(A_i) }{ {\rm det}(\sum_{i=1}^m (A_i - \Sigma_i^{-1}) + \Sigma_0^{-1} )} 
    =
    \frac{ {\rm det} (\widetilde{A_1}) \prod_{i=2}^m {\rm det}(A_i) }{ {\rm det} ((\widetilde{A_1} - {\rm id}_n) + \sum_{i=2}^m (\widetilde{A_i} - \widetilde{\Sigma_i}^{-1}) + \widetilde{\Sigma_0}^{-1} )}. 
    $$
    Let us show that, by fixing $A_2,\ldots,A_m$, the right-hand side is minimized when $\widetilde{A}_1={\rm id}_{n}$ among all $\widetilde{A}_1 \ge {\rm id}_n$: 
    \begin{align}\label{e:GenDetIn}
    \frac{ {\rm det} (\widetilde{A_1}) \prod_{i=2}^m {\rm det}(A_i) }{ {\rm det}((\widetilde{A_1} - {\rm id}_n) + \sum_{i=2}^m (\widetilde{A_i} - \widetilde{\Sigma_i}^{-1}) + \widetilde{\Sigma_0}^{-1} )}
    &\ge
    \frac{ \prod_{i=2}^m {\rm det}(A_i) }{ {\rm det}(  \sum_{i=2}^m (\widetilde{A_i} - \widetilde{\Sigma_i}^{-1}) + \widetilde{\Sigma_0}^{-1} )}
    \\
    &=
    \frac{ {\rm det}(\Sigma_1^{-1}) \prod_{i=2}^m {\rm det}(A_i) }{ {\rm det}(\sum_{i=2}^m (A_i - \Sigma_i^{-1}) + \Sigma_0^{-1} )}. \notag
    \end{align}
    Once we could prove this, we may repeat the same argument for each $i=2,\ldots,m$ to conclude the desired result. 

    If we denote $M:= \sum_{i=2}^m (\widetilde{A_i} - \widetilde{\Sigma_i}^{-1}) + \widetilde{\Sigma_0}^{-1} $, then it follows from assumptions $A_i\ge \Sigma_i^{-1}$ and $\Sigma_0^{-1} \ge \Sigma_1^{-1}$ that $M\ge {\rm id}_n$. 
    Thus for the purpose of proving \eqref{e:GenDetIn}, it suffices to show that 
    $$
    \widetilde{A_1},M\ge {\rm id}_n \quad \Rightarrow \quad 
    \frac{ {\rm det} (\widetilde{A_1})  }{ {\rm det}(\widetilde{A_1} - {\rm id}_n + M)}
    \ge 
    \frac1{{\rm det}\, M}. 
    $$
    To show this, without loss of generality, we may suppose that $\widetilde{A_1} = {\rm diag} (a_1, \dots, a_n)$ with $a_1, \dots, a_n \ge 1$.  
    Put $X := e_1 \otimes e_1$ and define for $t \ge -(a_1-1)$, 
    $$
    \Phi(t)
    :=
    \log \frac{ {\rm det} (\widetilde{A_1} + tX) }{ {\rm det}(\widetilde{A_1} + tX - {\rm id}_n + M )}.
    $$
    We then have that 
    \begin{align*}
        \Phi'(t)
        =
        {\rm Tr} \left[  \left( (\widetilde{A}_1 + tX )^{-1} - (\widetilde{A_1} + tX - {\rm id}_n +M )^{-1} \right)X \right]. 
    \end{align*}
    It is readily checked from $M\ge {\rm id}_n$ that $(\widetilde{A}_1 + tX )^{-1} - (\widetilde{A_1} + tX - {\rm id}_n +M )^{-1} \ge0$. 
    In view of $X \ge 0$, this implies that $\Phi'(t) \ge 0$. 
    Hence $\Phi(-(a_1-1)) \le \Phi(0)$, which means that the quantity we are focusing on is indeed minimized when $a_1=1$. 
    By repeating this argument for each $a_2,\ldots,a_m$, we may reduce $a_2,\ldots, a_m$ to 1, and thus \eqref{e:GenDetIn} follows.  
\end{proof}

We give a remark about Milman's way of deriving the Gaussian correlation inequality for symmetric convex sets.
To this end, let us recall the equivalent
form of the Gaussian correlation inequality. 
Let $d_1,d_2\in \mathbb{N}$ and $(X_1,X_2)$ be the random vector in $\R^{d_1}\times \R^{d_2}$ normally distributed with mean zero and covariance matrix $\Sigma = \begin{pmatrix} \Sigma_{11} & \Sigma_{12} \\ \Sigma_{12}^* &\Sigma_{22} \end{pmatrix}$ which is nonnegative definite on $\R^{d_1}\times \R^{d_2}$. 
Then the Gaussian correlation inequality is equivalent to that for symmetric and convex sets $L_i \subset \R^{d_i}$, $i=1,2$, 
\begin{equation}\label{e:GCIProb}
\mathbb{P} ( X_1\in L_1,\; X_2\in L_2 ) \ge \mathbb{P}(X_1\in L_1) \mathbb{P} (X_2 \in L_2). 
\end{equation}
In the case that $\Sigma$ is non-degenerate,  this may be read as 
\begin{equation}\label{e:ProbGCI-2}
\int_{\R^{d_1}\times \R^{d_2}} \mathbf{1}_{L_1}(x_1)\mathbf{1}_{L_2}(x_2) \, d\gamma_{\Sigma}(x_1,x_2) 
\ge 
\int_{\R^{d_1}} \mathbf{1}_{L_1}\, d\gamma_{\Sigma_{11}}
\int_{\R^{d_2}} \mathbf{1}_{L_2}\, d\gamma_{\Sigma_{22}}. 
\end{equation}
This would clearly follow from the inverse Brascamp--Lieb inequality 
$$
\int_{\R^{d_1+d_2}} e^{\langle x, \mathcal{Q}_\Sigma x\rangle } h_1(x_1)h_2(x_2) \, dx 
\ge 
\bigg(
\frac{ {\rm det}\, 2\pi \Sigma }{{\rm det}\, 2\pi \Sigma_{11}{\rm det}\, 2\pi \Sigma_{22}}
\bigg)^\frac12 
\prod_{i=1,2} \int_{\R^{d_i}} h_i\, dx_i,
$$
where $\mathcal{Q}_{\Sigma}:= {\frac12} \begin{pmatrix} \Sigma_{11} & 0\\ 0 & \Sigma_{22} \end{pmatrix}^{-1} - {\frac12}\Sigma^{-1}$. 
For this inequality, the linear maps are just orthogonal projections $x = (x_1,x_2) \mapsto x_i$, and thus one may apply Theorem \ref{t:NT3} rather than Theorem \ref{t:CenteredIBL} involving general linear maps. Instead, one has to consider fairly general $\mathcal{Q} = \mathcal{Q}_\Sigma$. 
Note that appealing to \eqref{e:ProbGCI-2} is a way of how Milman \cite{Mil} derived the Gaussian correlation inequality from Theorem \ref{t:NT3}. 
Thus there is a slight difference between the choices of Brascamp--Lieb data of Milman and ours. 


\subsection{Proof of Theorem \ref{t:NonSymGCI}}
Through this section, we consider the special Brascamp--Lieb datum, namely $m=2$, $n_1=n_2=n$, $c_1=c_2=1$, $B_1=B_2 = {\rm id}_n$, and $\mathcal{Q} = \frac12 {\rm id}_n$. 
Furthermore we denote 
    $$ 
    \mathcal{F}^{(a)}_1
    =
\{f \in L^1_+(\mathbb{R}^n): 
\int_{\mathbb{R}^n} x \frac{f(x)\, dx}{\int_{\R^n} f\, dy } =a,\; 
{\rm id}_n\text{-uniformly log-concave}
\}, 
$$
for $a \in \R^n$.  The corresponding Brascamp--Lieb constant is defined by 
$$
{\rm I}^{(a)}:= \inf_{f_1, f_2 \in \mathcal{F}^{(a)}_1}{\rm BL}(f_1, f_2). 
$$
With this terminology, the proof of Theorem \ref{t:NonSymGCI} may be reduced to the following claim: 
\begin{theorem}\label{t:NonCenterIBL}
    Let $m=2$, $n_1=n_2=n$, $c_1=c_2=1$ and $B_1=B_2 = {\rm id}_n$, and $\mathcal{Q} = \frac12 {\rm id}_n$. 
    For any $a \in \R^n$, it holds that 
    $$
    {\rm I}^{(a)} \ge {\rm I}_{{\rm id}_n, \infty}^{(o)}({\bf B}, {\bf c}, \mathcal{Q}) = (2\pi)^{-\frac n2}. 
    $$
    Furthermore if there exist $f_1, f_2 \in \mathcal{F}_1^{(a)}$ satisfying 
    $$
    {\rm BL}(f_1, f_2)  = (2\pi)^{-\frac n2}, 
    $$
    then it must hold $a=0$. 
\end{theorem}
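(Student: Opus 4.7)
The plan is to adapt the iteration-plus-CLT strategy from the proof of Theorem~\ref{t:CenteredIBL} to the present non-centered setting. The key observation is that the symmetrization trick underlying Lemma~\ref{l:Monotone*} does not actually require $f_1,f_2$ to be centered: the change of variables $(x,y) = \bigl(\tfrac{u+v}{\sqrt 2}, \tfrac{u-v}{\sqrt 2}\bigr)$ in $\mathrm{BL}(f_1,f_2)^2$ produces, for each fixed $u$, the auxiliary function
\[
F_i^{(u)}(v) := f_i\!\left(\tfrac{u+v}{\sqrt 2}\right) f_i\!\left(\tfrac{u-v}{\sqrt 2}\right), \qquad v \in \R^n,
\]
which is always even in $v$, hence centered, and inherits $\mathrm{id}_n$-uniform log-concavity from $f_i$. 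Applying Theorem~\ref{t:CenteredIBL} to the inner $v$-integration (permissible since $H_i=\infty$ is allowed there) then gives $\int e^{\frac12|v|^2} F_1^{(u)} F_2^{(u)}\,dv \ge (2\pi)^{-n/2}\prod_i\int F_i^{(u)}$, where the constant $(2\pi)^{-n/2} = \mathrm{I}^{(o)}_{\mathrm{id}_n,\infty}(\mathbf{B},\mathbf{c},\mathcal{Q})$ is computed via Lemma~\ref{l:GenGaussConst} with all covariances equal to $\mathrm{id}_n$. Unwinding produces the self-convolution monotonicity
\[
\mathrm{BL}(f_1,f_2)^2 \;\ge\; (2\pi)^{-n/2}\, \mathrm{BL}(g_1,g_2), \qquad g_i(x) := 2^{n/2}(f_i \ast f_i)(\sqrt{2}\,x),
\]
where each $g_i$ is again $\mathrm{id}_n$-uniformly log-concave (convolution of $\mathrm{id}_n$-ulc functions is $\tfrac12\mathrm{id}_n$-ulc, and the $\sqrt 2$-rescaling restores $\mathrm{id}_n$-ulc) with new barycenter $\sqrt 2\,a$.

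Normalizing $\int f_i = 1$ and iterating $k$ times yields
\[
\mathrm{BL}(f_1,f_2)^{2^k} \;\ge\; (2\pi)^{-n(2^k-1)/2}\, \mathrm{BL}\bigl(F_1^{(k)}, F_2^{(k)}\bigr),
\]
with $F_i^{(k)}(x) := (2^k)^{n/2} f_i^{(2^k)}(2^{k/2} x)$, an $\mathrm{id}_n$-ulc function with barycenter $2^{k/2}a$. By the local central limit theorem (as used already in display~\eqref{e:Iteration}), the \emph{re-centered} iterate $\hat F_i^{(k)}(y) := F_i^{(k)}(y + 2^{k/2}a)$ converges in $L^1$, hence pointwise a.e., to the centered Gaussian $\gamma_{\Sigma_i}$, where $\Sigma_i$ is the covariance of $f_i$ (with $\Sigma_i^{-1} \ge \mathrm{id}_n$).

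To lower-bound $\mathrm{BL}(F^{(k)})$, I substitute $x = y + 2^{k/2}a$ in its numerator and restrict the $y$-integration to a fixed ball $\{|y| \le R\}$. On this region $|y + 2^{k/2}a| \ge 2^{k/2}|a| - R$, so
\[
\mathrm{BL}(F^{(k)}) \;\ge\; e^{\frac12(2^{k/2}|a|-R)^2} \int_{|y|\le R} \hat F_1^{(k)}(y)\,\hat F_2^{(k)}(y)\,dy,
\]
and the truncated integral converges to $\int_{|y|\le R} \gamma_{\Sigma_1}\gamma_{\Sigma_2}\,dy > 0$ as $k\to\infty$. Taking $2^{-k}$-th roots and letting $k\to\infty$ yields $\liminf_k \mathrm{BL}(F^{(k)})^{2^{-k}} \ge e^{|a|^2/2}$, and therefore
\[
\mathrm{BL}(f_1,f_2) \;\ge\; (2\pi)^{-n/2} \cdot e^{|a|^2/2} \;\ge\; (2\pi)^{-n/2},
\]
as claimed. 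The main obstacle is that $F_i^{(k)}$ does not converge as a function, since its mass escapes to infinity along $2^{k/2}a$; the localization-to-$\{|y|\le R\}$ trick circumvents this by transferring the exponential growth of $e^{\frac12|y + 2^{k/2}a|^2}$ cleanly into a lower bound that survives the $2^{-k}$-th root. A minor bookkeeping point is to verify inductively that $\mathrm{id}_n$-ulc is preserved under the convolution-plus-rescaling operation at every step, which follows from the Brascamp--Leindler stability of uniform log-concavity under convolution.
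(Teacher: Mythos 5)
Your proof is correct and follows essentially the same route as the paper: the same Ball-type self-convolution monotonicity (the inner symmetrized functions $F_i^{(u)}$ are even, hence centered, so Theorem~\ref{t:CenteredIBL} and Lemma~\ref{l:GenGaussConst} apply), the same iteration to $F_i^{(k)}$, and the same CLT input. The only variation is in the final limit: the paper argues by contradiction (Fatou forces $\int e^{\frac12|x+2^{k/2}a|^2}\widetilde h_1^{(2^k)}\widetilde h_2^{(2^k)}\to\infty$, contradicting a supposed bound), whereas your localization to $\{|y|\le R\}$ produces a direct lower bound and even the sharper quantitative estimate ${\rm BL}(f_1,f_2)\ge (2\pi)^{-n/2}e^{|a|^2/2}$.
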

Remark that we will use the second part of Theorem \ref{t:NonCenterIBL} to prove Theorem \ref{t:EqualityCase} in the next section. 
By assuming Theorem \ref{t:NonCenterIBL} for a while, let us conclude the proof of Theorem \ref{t:NonSymGCI}. 
\begin{proof}[Proof of Theorem \ref{t:NonSymGCI}]
If we choose 
$
f_i := \mathbf{1}_{K_i} e^{-\frac12 |\cdot|^2},   
$
$i=1,2$, 
then, by the assumption of $K_i$, we have $f_1, f_2 \in \mathcal{F}_1^{(a)}$, where 
$$
a := \int_{K_1} x\, \frac{d\gamma}{\gamma(K_1)} = \int_{K_2} x\, \frac{d\gamma}{\gamma(K_2)}. 
$$
Thus we may apply Theorem \ref{t:NonCenterIBL} to see that 
$$
\int_{\R^n} e^{\frac12 |x|^2} f_1(x) f_2(x)\, dx \ge (2\pi)^{-\frac n2} \int_{\R^n} f_1\, dx \int_{\R^n} f_2\, dx, 
$$
which yields the desired assertion. 
\end{proof}

\begin{proof}[Proof of Theorem \ref{t:NonCenterIBL}]
    First, remark that ${\rm I}_{{\rm id}_n, \infty}^{(o)}({\bf B}, {\bf c}, \mathcal{Q}) = (2\pi)^{-\frac n2}$ is the consequence of Theorem \ref{t:GenCor}. 
    {Thus, to show the first part of Theorem \ref{t:NonCenterIBL},} it suffices to show the inequality. 
    For this purpose, we claim the following Ball's inequality: 
    for any ${\rm id}_n$-uniformly log-concave $h_1, h_2 \in L^1_+(\R^n)$ with $\int_{\R^n} h_1\, dx = \int_{\R^n} h_2\, dx=1$ and $a \in \R^n$, it holds that 
    \begin{equation}\label{e:BallTypeIn}
        \left( \int_{\R^n} e^{\frac12 |x + a|^2} h_1(x) h_2(x)\, dx \right)^2
        \ge
        (2\pi)^{-\frac n2} \int_{\R^n} e^{\frac12 |x + \sqrt{2}a|^2} \widetilde{h}_1^{(2)}(x)  \widetilde{h}_2^{(2)}(x)\, dx, 
    \end{equation}
    where $\widetilde{h}_i^{(2^k)}:= (2^\frac{k}2)^{n} h_i^{(2^k)}(2^{\frac{k}2}\cdot)$ and $h_i^{(2^k)}$ is given in \eqref{e:IterateConv}  for $i=1,2$ and  $k \in \mathbb{N}$. 
    The idea of the proof of \eqref{e:BallTypeIn} is almost the same as Lemma \ref{l:Monotone*}. 
    In fact, the direct calculation combining with the change of variables yields that 
    \begin{align*}
        &\left( \int_{\R^n} e^{\frac12 |x + a|^2} h_1(x) h_2(x) dx \right)^2
        \\
        &=
        \int_{\R^n} e^{\frac12 |x + \sqrt{2}a|^2} \int_{\R^n} e^{\frac12 |y|^2} h_1\left( \frac{x+y}{\sqrt{2}} \right) h_1\left( \frac{x-y}{\sqrt{2}} \right) h_2\left( \frac{x+y}{\sqrt{2}} \right) h_2\left( \frac{x-y}{\sqrt{2}} \right)\, dydx. 
    \end{align*}
    As already checked in Lemma \ref{l:Monotone*}, we know that the functions  
    $$
    H_i^{(x)} : \R^n \ni y \mapsto h_i\left( \frac{x+y}{\sqrt{2}} \right) h_i\left( \frac{x-y}{\sqrt{2}} \right) \in [0, \infty), \;\;\; i=1,2
    $$
    are in $\mathcal{F}_{{\rm id}_n,\infty}^{(o)}$. 
    Thus by applying Theorem \ref{t:CenteredIBL} and Lemma \ref{l:GenGaussConst} with $m=2$ and $\Sigma_0=\Sigma_1=\Sigma_2 = {\rm id}_n$, we obtain that 
     \begin{align*}
        &\left( \int_{\R^n} e^{\frac12 |x + a|^2} h_1(x) h_2(x) dx \right)^2
         \\
         &\ge
        (2\pi)^{-\frac n2}
        \int_{\R^n} e^{\frac12 |x + \sqrt{2}a|^2} \int_{\R^n} H_1^{(x)}(y)\, dy 
         \int_{\R^n} H_2^{(x)}(y) \, dy\, dx. 
    \end{align*}
    This means \eqref{e:BallTypeIn}. 

    To obtain the desired conclusion, we contradictory suppose that there exists some $a \in \R^n \setminus \{0\}$ such that ${\rm I}^{(a)} < (2\pi)^{-\frac n2}$. 
    Then we may take some functions $f_1, f_2 \in \mathcal{F}_{{\rm id}_n, \infty}^{(a)}$ such that 
    \begin{equation}\label{e:ContraAss}
    {\rm BL}(f_1, f_2) < (2\pi)^{-\frac n2}. 
    \end{equation}
    Without loss of generality, we may suppose that $\int_{\R^n} f_1\,d x = \int_{\R^n} f_2\, dx =1$. 
    By letting $h_i(x) := f_i(x + a)$, the iterative applications of \eqref{e:BallTypeIn} imply that 
    \begin{align*}
        &\left( \int_{\R^n} e^{\frac12 |x+a|^2} h_1(x) h_2(x) \,dx \right)^{2^k}
        \\
        &\ge
        (2\pi)^{-\frac n2 (2^{k}-1)} \int_{\R^n} e^{\frac12 |x + 2^{k/2}a|^2} \widetilde{h}_1^{(2^k)}(x) \widetilde{h}_2^{(2^k)}(x) \,dx, \;\;\; \forall k \in \mathbb{N}.  
    \end{align*}
    On the other hand, by definition, we see that 
    $$
    \int_{\R^n} e^{\frac12 |x|^2} f_1(x) f_2(x) \,dx 
    =
    \int_{\R^n} e^{\frac12 |x+a|^2} h_1(x) h_2(x)\, dx.  
    $$
    Combining with \eqref{e:ContraAss}, we conclude 
    $$
    (2\pi)^{-\frac n2} 
    > \int_{\R^n} e^{\frac12 |x + 2^{k/2}a|^2} \widetilde{h}_1^{(2^k)}(x) \widetilde{h}_2^{(2^k)}(x) \, dx, \;\;\; \forall k \in \mathbb{N}. 
    $$
    However this is a contradiction. In fact, if one notices $h_i \in \mathcal{F}_{{\rm id}_n, \infty}^{(o)}$ by definition, the central limit theorem enables us to take centered Gaussians $g_1, g_2$ such that $\lim_{k \to \infty} \widetilde{h}_i^{(2^k)} = g_i$ pointwisely for $i=1,2$. 
    Thus since $a \neq 0$, Fatou's lemma implies that 
    $$
    \liminf_{k \to \infty} \int_{\R^n} e^{\frac12 |x + 2^{k/2}a|^2} \widetilde{h}_1^{(2^k)}(x) \widetilde{h}_2^{(2^k)}(x)\, dx
    = \infty. 
    $$

    {Next, let us show the second part in Theorem \ref{t:NonCenterIBL}. 
    Let $f_1, f_2 \in \mathcal{F}_1^{(a)}$ satisfy 
    $$
    {\rm BL}(f_1, f_2)  = (2\pi)^{-\frac n2}.  
    $$
    Then by the same argument above, we have 
    $$
    (2\pi)^{-\frac n2} 
    \ge \int_{\R^n} e^{\frac12 |x + 2^{k/2}a|^2} \widetilde{h}_1^{(2^k)}(x) \widetilde{h}_2^{(2^k)}(x) \, dx, \;\;\; \forall k \in \mathbb{N}, 
    $$
    where $\widetilde{h}_i^{(2^k)}$ for $i=1,2$ are the same ones above. 
    Thus we may conclude $a=0$ by considering $k \to \infty$ as we have already seen above. 
    }
\end{proof}


    One may wonder another formulation of the non-symmetric version of the Gaussian correlation inequality rather than Theorem \ref{t:NonSymGCI}.
    One possibility is as follows: for any log-concave functions $h_1, h_2 \in L^1(\gamma)$, 
    \begin{equation}\label{e:BaryGCI}
    \int_{\R^n} h_1 h_2\, d\gamma \ge (1 + \langle {\rm bar}(h_1), {\rm bar}(h_2) \rangle) \int_{\R^n} h_1\, d\gamma \int_{\R^n} h_2\, d\gamma, 
    \end{equation}
    where 
    $$
    {\rm bar}(h_i) := \int_{\R^n} x \frac{h_i d\gamma}{\int_{\R^n} h_i\, d\gamma}, \;\;\; i=1,2. 
    $$
    Actually Harg\'{e} \cite[Theorem 2]{Harge2} have shown \eqref{e:BaryGCI} for any \textit{convex} functions $h_1, h_2$. 
    We here observe that one cannot hope to have \eqref{e:BaryGCI} for all log-concave functions. 
    To see this, let us consider $n=1$, $A_1 :=(0, \infty)$ and $A_2 =A_2(R_2):= (R_2, \infty)$ for $R_2 \gg 1$, and put $h_i := \mathbf{1}_{A_i}$ for $i=1,2$. 
    Note that $A_2 \subset A_1$. Hence, if \eqref{e:BaryGCI} would hold true for any log-concave functions, we could derive 
    $$
    1 \ge (1 + \langle {\rm bar}(h_1), {\rm bar}(h_2) \rangle) \gamma(A_1). 
    $$
    On the other hand, by definition, we know that ${\rm bar}(h_1), {\rm bar}(h_2)>0$ and moreover $\lim_{R_2 \to \infty} {\rm bar}(h_2)=\infty$. 
    This means that 
    $$
    \lim_{R_2 \to \infty} (1 + \langle {\rm bar}(h_1), {\rm bar}(h_2) \rangle) \gamma(A_1)=\infty
    $$
    which is a contradiction.

We conclude this section by giving another variant of the Gaussian correlation inequality introduced by Cordero-Erausquin \cite{C-E}. 
To describe this, we need to introduce some notations. 
For a set $A \subset \R^n$, let ${\rm Iso}(A)$ be a set of all isometries $r$ on $\R^n$ with $r(A)=A$, and put 
$$
{\rm Fix} (A) :=\{x \in \R^n\, :\, r(x)=x, \;\;\; \forall r \in {\rm Iso}(A)\}. 
$$
Remark that ${\rm Iso}(A)$ consists of finite compositions of translations, rotations and reflections. Especially ${\rm Fix}(A)=\{0\}$ if $A$ is symmetric. 
It is worth to noting that, in general, ${\rm Fix}(A)$ is determined by the geometric shape of $A$ only, rather than for instance the distribution of mass. 
Then Cordero-Erausquin \cite{C-E} showed \eqref{e:OriginGCI} for a convex body $K_1 \subset \R^n$ with ${\rm Fix}(K_1)=\{0\}$ and $K_2=\mathbf{B}_2^n := \{ x \in \R^n\, :\, |x| \le1\}$. 
We give a generalization of it for multiple convex sets, which seems new even when $m=2$. 

\begin{corollary}\label{cor:Cordero}
    Let $m \in \mathbb{N}$ be $m \ge 2$. For any convex sets $K_i \subset \R^n$ with ${\rm Fix} (K_i)=0$, $i=1, \dots, m$, we have \eqref{e:m-GCI}. 
\end{corollary}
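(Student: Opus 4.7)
The plan is to reduce the corollary to Theorem \ref{t:GenCor} specialized to $\Sigma_0 = \Sigma_1 = \cdots = \Sigma_m = {\rm id}_n$; the entire content then lies in observing that the geometric hypothesis ${\rm Fix}(K_i) = \{0\}$ automatically delivers the probabilistic centering condition $\int_{K_i} x\, d\gamma = 0$ required by that theorem.

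To establish the centering, I would first note that the hypothesis $0 \in {\rm Fix}(K_i)$ means, by definition of ${\rm Fix}$, that every $r \in {\rm Iso}(K_i)$ fixes the origin. Since any isometry of $\R^n$ fixing the origin is a linear orthogonal transformation, each such $r$ preserves the standard Gaussian measure $\gamma$. Combined with $r(K_i) = K_i$ and the linearity of $r$, a change of variables yields
\[
r(b_i) \;=\; \int_{K_i} r(x)\, d\gamma(x) \;=\; \int_{K_i} y\, d\gamma(y) \;=\; b_i,
\]
where $b_i := \int_{K_i} x\, d\gamma(x)$ denotes the unnormalized Gaussian barycenter of $K_i$. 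Hence $b_i$ is a common fixed point of every element of ${\rm Iso}(K_i)$, so $b_i \in {\rm Fix}(K_i) = \{0\}$, giving $b_i = 0$. (Sets with $\gamma(K_i) = 0$ may be excluded at the outset, since then \eqref{e:m-GCI} is trivial.)

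Once each $K_i$ is centered with respect to $\gamma$, the inequality \eqref{e:m-GCI} follows immediately from Theorem \ref{t:GenCor}. No serious obstacle is anticipated: the argument is a direct translation of the symmetry hypothesis into a moment condition, and all of the analytic work has already been carried out in the proof of Theorem \ref{t:GenCor}.
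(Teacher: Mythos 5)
Your proposal is correct and takes essentially the same approach as the paper: both arguments show that the unnormalized Gaussian barycenter $\int_{K_i} x\, d\gamma$ is a common fixed point of ${\rm Iso}(K_i)$, conclude from ${\rm Fix}(K_i)=\{0\}$ that it vanishes, and then invoke Theorem \ref{t:GenCor}. Your version is slightly more explicit than the paper in noting that $0 \in {\rm Fix}(K_i)$ forces every $r \in {\rm Iso}(K_i)$ to be a linear orthogonal map, which is exactly what justifies the change-of-variables identity $\int_{r(K_i)} x\, d\gamma = \int_{K_i} r(x)\, d\gamma$ that the paper uses without comment.
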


\begin{proof}
    For any $r \in {\rm Iso}(K_i)$, 
    $$
    \int_{K_i} x \, d\gamma = \int_{r(K_i)} x \, d\gamma 
    =
    \int_{K_i} r(x) \, d\gamma
    =
    r \left( \int_{K_i} x \, d\gamma \right), 
    $$
    which means that $\int_{K_i} x \, d\gamma \in {\rm Fix}(K_i)$ for $i =1, \dots, m$. 
    Since ${\rm Fix}(K_i)=\{0\}$, we have $\int_{K_i} x \, d\gamma=0$, and thus we may apply Theorem \ref{t:GenCor} to see the desired assertion. 
\end{proof}

\section{Proof of Theorem \ref{t:EqualityCase}}
We begin with an investigation of the case of equality in the inequality in Lemma \ref{l:GenGaussConst} by specializing $m=2$ and $\Sigma_0=\Sigma_1 = \Sigma_2={\rm id}_n$: \begin{equation}\label{e:MatrixGCI}
\frac{ \det (A_1) \det(A_2) }{ \det \big( A_1+A_2 - {\rm id}_n \big) }
\ge 
\frac{ \det ({\rm id}_n) \det({\rm id}_n) }{ \det \big( {\rm id}_n+{\rm id}_n - {\rm id}_n \big) },\quad A_1,A_2\ge {\rm id}_n.
\end{equation}
In other words, we first study the case of equality in the Gaussian correlation inequality for Gaussian inputs. 
For this purpose, we introduce a useful notation. 
For a given $A\ge {\rm id}_n$, let $E_{\rm id}(A) \subset \R^n$ be the eigenspace of $A$ corresponding to the eigenvalue 1: 
$$
E_{\rm id}(A):= 
{\rm Span}\, \big\{ u : Au =u \big\}. 
$$
This $E_{\rm id}(A)$ is uniquely determined by $A$. 
Evidently, $E_{\rm id}(A) = \{0\}$ means that $A > {\rm id}_n$ and $E_{\rm id}(A) = \R^n$ means that $A = {\rm id}_n$. 
The spectral decomposition of $A$ gives that 
\begin{equation}\label{e:SpecDecomp}
A =  
P_{E_{\rm id}(A)^\perp}^* \overline{A}P_{E_{\rm id}(A)^\perp}
+
P_{E_{\rm id}(A)}^* P_{E_{\rm id}(A)}
\end{equation}
for some $\overline{A} > {\rm id}_{E_{\rm id}(A)^\perp}$, where $P_E$ denotes the orthogonal projection onto a subspace $E$. 
In fact, if we denote eigenvalues of $A$ by $a_1,\ldots, a_{n_0} \gneq 1 = a_{n_0+1}=\cdots = a_n$ and eigenvectors by $u_1,\ldots, u_n$, where $n_0 := {\rm dim}\, (E_{\rm id}(A)^\perp)\ge0$, then 
\begin{equation}\label{e:DetailedSpecDecomp}
A =\sum_{i=1}^{n_0} a_i u_i\otimes u_i + \sum_{j=n_0+1}^n u_j\otimes u_j
=:  
P_{E_{\rm id}(A)^\perp}^* \overline{A}P_{E_{\rm id}(A)^\perp}
+
P_{E_{\rm id}(A)}^* P_{E_{\rm id}(A)}. 
\end{equation}

\begin{lemma}\label{l:EqualMatrixGCI}
    Equality in \eqref{e:MatrixGCI} holds if and only if it holds that 
    $$
    A_2 = P^*_{E_{\rm id}(A_1)^\perp}P_{E_{\rm id}(A_1)^\perp} + P^*_{E_{\rm id}(A_1)} \overline{A_2} P_{E_{\rm id}(A_1)}
    $$
    for some $\overline{A}_2 \ge {\rm id}_{ E_{\rm id}(A_1) }$, or in other word, 
    \begin{equation}\label{e:EqualMatrixGCI}
         E_{\rm id}(A_2) \supset E_{\rm id}(A_1)^\perp . 
    \end{equation}
\end{lemma}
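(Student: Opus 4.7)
My plan is to reformulate the claim by writing $B_i := A_i - {\rm id}_n \ge 0$ for $i=1,2$, so that \eqref{e:MatrixGCI} becomes
\[
\det({\rm id}_n + B_1 + B_2) \le \det({\rm id}_n + B_1)\det({\rm id}_n + B_2).
\]
Since $E_{\rm id}(A_i) = \ker B_i$ and $B_1$ is symmetric, the candidate equality condition \eqref{e:EqualMatrixGCI} reads $\ker B_2 \supset (\ker B_1)^\perp = {\rm range}(B_1)$, i.e., $B_2 B_1 = 0$, equivalently $B_1 B_2 = 0$. So the claim reduces to: equality in the displayed determinant inequality holds if and only if $B_1 B_2 = 0$.

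For the direction $B_1 B_2 = 0 \Rightarrow$ equality, the (orthogonal) ranges of $B_1$ and $B_2$ give the decomposition $\R^n = {\rm range}(B_1) \oplus {\rm range}(B_2) \oplus (\ker B_1 \cap \ker B_2)$; in an orthonormal basis adapted to it, both $B_i$ are simultaneously block-diagonal with nonzero blocks on disjoint factors, and the determinant identity is immediate.

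For the converse, I would introduce the interpolation
\[
\psi(s) := \log\det({\rm id}_n + sB_1 + B_2) - \log\det({\rm id}_n + sB_1) - \log\det({\rm id}_n + B_2), \quad s \in [0,1],
\]
for which $\psi(0) = 0$ while the equality hypothesis reads $\psi(1) = 0$. Jacobi's formula gives
\[
\psi'(s) = {\rm Tr}\bigl[ B_1 \bigl( ({\rm id}_n + sB_1 + B_2)^{-1} - ({\rm id}_n + sB_1)^{-1} \bigr) \bigr].
\]
Since $B_2 \ge 0$ forces the parenthesized difference to be negative semi-definite, and $B_1 \ge 0$, the trace is non-positive, so $\psi$ is non-increasing. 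Together with $\psi(0) = \psi(1) = 0$, this forces $\psi \equiv 0$, in particular $\psi'(0) = 0$. Using $({\rm id}_n + B_2)^{-1} - {\rm id}_n = -({\rm id}_n + B_2)^{-1} B_2$, this simplifies to
\[
{\rm Tr}\bigl[ B_1 B_2 ({\rm id}_n + B_2)^{-1} \bigr] = 0.
\]

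The main (though still fairly mild) obstacle is converting this scalar identity into the operator identity $B_1 B_2 = 0$. My plan is to set $W := B_2 ({\rm id}_n + B_2)^{-1}$, which is symmetric and positive semi-definite, since $B_2$ and $({\rm id}_n + B_2)^{-1}$ commute and are both PSD, and which has the same kernel as $B_2$. The trace identity then reads ${\rm Tr}[B_1^{1/2} W B_1^{1/2}] = 0$; positive semi-definiteness of $B_1^{1/2} W B_1^{1/2}$ yields $B_1^{1/2} W B_1^{1/2} = 0$, hence $W^{1/2} B_1^{1/2} = 0$, and thus $W B_1 = 0$. Multiplying by $({\rm id}_n + B_2)$ on the left gives $B_2 B_1 = 0$, and taking adjoints produces the desired $B_1 B_2 = 0$, completing the proof.
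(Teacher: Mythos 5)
Your proof is correct, and it takes a genuinely different route from the paper's, though both rest on the same basic device of differentiating a log-determinant along a path and exploiting that equality forces the derivative to vanish. The paper diagonalizes $A_1$ and works one coordinate at a time: for each $i$ it introduces a separate path $\Phi_i(t) = \log\frac{\det(A_1 + t\,e_i\otimes e_i)}{\det(A_1 + t\,e_i\otimes e_i + C)}$ with $C = A_2 - {\rm id}_n$, imposes $\Phi_i'(0)=0$, and unpacks this via a Schur-complement expansion of $(A_1+C)^{-1}$ into the scalar relation $C_{11} = \langle\boldsymbol{C},(A_1'+C')^{-1}\boldsymbol{C}\rangle$; a separate $\varepsilon$-perturbation argument is then needed to conclude that the off-diagonal vector $\boldsymbol{C}$ vanishes. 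Your version works at the operator level throughout: the substitution $B_i = A_i - {\rm id}_n$, the translation of \eqref{e:EqualMatrixGCI} into $B_1B_2 = 0$, the single interpolation $\psi(s)$ on $[0,1]$, and crucially the positivity argument that ${\rm Tr}[B_1 W]=0$ with $B_1,W\ge 0$ already forces the operator identity $B_1 W = 0$ — this dispatches the necessity direction in one stroke and entirely bypasses the blockwise computation. The paper's approach is more explicit about which matrix entries of $A_2$ vanish and dovetails with the proof of Lemma \ref{l:GenGaussConst} that precedes it; your formulation is shorter, basis-free, and makes the structural content (orthogonality of $\mathrm{range}\,B_1$ and $\mathrm{range}\,B_2$) manifest. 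Both are valid; if you wanted a tiny polish you might note explicitly that $\psi$ is real-analytic in a neighborhood of $[0,1]$ so $\psi\equiv 0$ on $[0,1]$ does give $\psi'(0)=0$, but this is a non-issue.
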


\begin{proof}
    Suppose $A_1,A_2$ satisfy \eqref{e:EqualMatrixGCI} and let us see that equality holds in \eqref{e:MatrixGCI}. 
    As in the proof of Lemma \ref{l:GenGaussConst}, we may assume that $A_1$ is diagonal. Moreover, by denoting $n_0 := {\rm dim}\, E_{\rm id}(A_1)^\perp \ge 0$, we may suppose that\footnote{In the case of $n_0 =0$, we understand $A_1 = {\rm id}_n$.} 
    \begin{equation}\label{e:DiagA1}
    A_1 = {\rm diag}\, (a^{(1)}_1,\ldots, a_{n_0}^{(1)},1,\ldots,1)
    \end{equation}
    for some $a^{(1)}_1,\ldots, a_{n_0}^{(1)}\gneq 1$. 
    In other word, $E_{\rm id}(A_1)^\perp = \langle e_1,\ldots,e_{n_0}\rangle$ and $E_{\rm id}(A_1) = \langle e_{n_0+1},\ldots, e_n\rangle$. 
    It follows from \eqref{e:EqualMatrixGCI} that $E_{\rm id}(A_2) \supset \langle e_1,\ldots, e_{n_0}\rangle$. Hence, in view of the spectral decomposition \eqref{e:SpecDecomp},  $A_2$ must have the form of 
    \begin{equation}\label{e:GoalA2}
    A_2 = \begin{pmatrix}
        {\rm id}_{n_0} & O_{ n-n_0, n_0 } \\
        O_{n_0, n-n_0} & \overline{A_2}
    \end{pmatrix}
    \quad \text{for some}\quad \overline{A_2} \ge {\rm id}_{\langle e_{n_0+1},\ldots, e_n\rangle}.  
    \end{equation}
    Given such an explicit form of $A_1,A_2$, it is readily to see equality in \eqref{e:MatrixGCI}. 

    We next take any $A_1,A_2$ establishing equality in \eqref{e:MatrixGCI} and show that these must satisfy \eqref{e:EqualMatrixGCI}. 
    For this purpose, we may again assume the form of \eqref{e:DiagA1} without loss of generality. This is because, if we take any $U \in O(n)$ and let $\widetilde{A}_i:= U^* A_i U$, then $\widetilde{A}_1,\widetilde{A}_2$ also satisfy equality in \eqref{e:MatrixGCI}. Also, it suffices to notice that $E_{\rm id}(U^* AU) = U^*( E_{\rm id}(A) )$ holds in general from \eqref{e:DetailedSpecDecomp}. 
    In the case of \eqref{e:DiagA1}, our goal is to show that $A_2$ has the form of \eqref{e:GoalA2}. 
    By recalling the proof of Lemma \ref{l:GenGaussConst}, we consider  
    \begin{align*}\Phi_i(t)&:= 
    \log\, \frac{ \det A_1^i(t) }{\left( \det A_1^i(t) +C  \right)}, \quad t \in  [ -( a^{(1)}_i -1 ), 0 ], 
    \end{align*}
    for each $i=1,\ldots, n_0$, where 
    $$
    A_1^i(t):= A_1 + t e_i \otimes e_i,\quad C:= A_2 - {\rm id}_n. 
    $$
    Note that $C\ge0$. 
    As we saw in the proof of Lemma \ref{l:GenGaussConst}, 
    \begin{align*}
        \Phi_i'(t) 
        &= 
        {\rm Tr}\, \bigg[ \bigg( A_1^i(t)^{-1} - \big( A_1^i(t) + C \big)^{-1} \bigg) e_i \otimes e_i \bigg] \\
        & = 
        \bigg\langle e_i,\bigg( A_1^i(t)^{-1} - \big( A_1^i(t) + C \big)^{-1} \bigg)  e_i \bigg\rangle, 
    \end{align*}
    and $\Phi'_i(t) \ge0$ holds for all $t \in [-( a^{(1)}_i -1 ), 0]$, from which the inequality \eqref{e:MatrixGCI} follows. 
    Since $(A_1,A_2)$ establishes equality in \eqref{e:MatrixGCI}, we must have that $\Phi_i'(t) =0$, that is 
    $$
    \bigg\langle e_i,\bigg( A_1^i(t)^{-1} - \big( A_1^i(t) + C \big)^{-1} \bigg)  e_i \bigg\rangle = 0,\quad \forall t\in [ -(a^{(1)}_i -1), 0 ]. 
    $$
    In particular, at $t=0$, this gives that 
    $$
    (a^{(1)}_i)^{-1} = \bigg\langle e_i, \big( A_1 + C \big)^{-1}  e_i \bigg\rangle.  
    $$
    We have for $i=1$, for instance, that 
    \begin{align*}
    \bigg\langle e_1, \big( A_1 + C \big)^{-1} e_1\bigg\rangle 
    &= 
    \bigg( (a_1^{(1)}+C_{11}) - ( C_{12} \cdots C_{1n} ) \big( {A_1}' + {C}' \big)^{-1} \begin{pmatrix} C_{12}\\ \vdots \\ C_{1n} \end{pmatrix} \bigg)^{-1}, 
    \end{align*}
    where we use notations that $A = (A_{ij})_{i,j=1,\ldots,n}$ and ${A}':= (A_{ij})_{i,j=2,\ldots,n}$. 
    It thus follows that 
    \begin{equation}\label{e:C11}
        C_{11}
        = 
         ( C_{12} \cdots C_{1n} ) \big( {A_1}' + {C}' \big)^{-1} \begin{pmatrix} C_{12}\\ \vdots \\ C_{1n} \end{pmatrix}. 
    \end{equation}
    We claim that it follows from \eqref{e:C11} that 
    \begin{equation}\label{e:Goal6/10-1}
        C_{11} = C_{12}=\cdots = C_{1n} = 0. 
    \end{equation}
    We postpone showing this claim later and let us conclude the proof by admitting the validity of \eqref{e:Goal6/10-1}. 
    By adapting the same argument for all $i =1,\ldots, n_0$, we derive that $C_{i1}= \cdots = C_{in}=0$ for all $i =1,\ldots, n_0$, and thus 
    $$
    C = \begin{pmatrix}
        O_{n_0} & O_{ n-n_0,n_0 } 
        \\ 
        O_{n_0 , n-n_0} & \overline{C}
    \end{pmatrix} \quad \text{for some} \quad \overline{C} \ge 0 \; {\rm on}\; \langle e_{n_0+1},\ldots,e_n\rangle. 
    $$
    In view of the definition $C:= A_2-{\rm id}_n$, this proves \eqref{e:GoalA2}.

    To complete the proof, we need to show \eqref{e:Goal6/10-1}. 
    For this purpose, we make use of ${A_1}'>0$. 
    Since $C' \ge 0$, for any $\varepsilon >0$, we have that ${C}'+\varepsilon A_1'>0$. Thus it follows from $C+\varepsilon A_1>0$ ans Schur's complement that 
    $$
    C_{11}+ \varepsilon a^{(1)}_1 >
    (C_{12} \cdots C_{1n}) \big( {C}' + \varepsilon {A_1}' \big)^{-1} \begin{pmatrix} 
    C_{12}\\ \vdots \\ C_{1n} 
    \end{pmatrix}
    =: \big\langle \boldsymbol{C}, \big( {C}' + \varepsilon {A_1}' \big)^{-1} \boldsymbol{C}\big\rangle, 
    $$
    where we use a notation $\boldsymbol{C}:= (C_{12},\ldots,C_{1n})^*$. 
    By combining this with \eqref{e:C11}, 
    $$
    \big\langle \boldsymbol{C}, \big( {C}' +  {A_1}' \big)^{-1} \boldsymbol{C}\big\rangle
    + \varepsilon a^{(1)}_1 > 
    \big\langle \boldsymbol{C}, \big( {C}' + \varepsilon {A_1}' \big)^{-1} \boldsymbol{C}\big\rangle. 
    $$
    In view of $A_1 >0$, there exists some $\delta>0$ such that $A_1 \ge 2 \delta\, {\rm id}_n$. 
    Since $C' + A_1' \ge C' + 2\delta\, {\rm id}_{n-1}$ and $C' + \varepsilon A_1' \le C' + \varepsilon A_1' + \delta {\rm id}_{n-1}$, we obtain that 
    $$
    \langle \boldsymbol{C}, (C' + 2\delta\, {\rm id}_{n-1})^{-1} \boldsymbol{C} \rangle + \varepsilon a_1^{(1)} > \langle \boldsymbol{C}, ( C' + \varepsilon A_1' + \delta\, {\rm id}_{n-1} )^{-1} \boldsymbol{C} \rangle. 
    $$
    Letting $\varepsilon \to 0$, it follows that 
    $$
    \langle \boldsymbol{C}, (C' + 2\delta\, {\rm id}_{n-1})^{-1} \boldsymbol{C} \rangle 
    \ge 
    \langle \boldsymbol{C}, ( C' + \delta\, {\rm id}_{n-1} )^{-1} \boldsymbol{C} \rangle. 
    $$
    However, this would be a contradiction unless $\boldsymbol{C} =0$ since the reversed inequality $(C' + \delta\, {\rm id}_{n-1})^{-1} > (C' + 2\delta\, {\rm id}_{n-1})^{-1}$ always holds.
    That $C_{11}=0$ follows from \eqref{e:C11} combining with $\boldsymbol{C}=0$. 
    \if0 
    Note that 
    $$
    ({C}' + {A_1}')^{-1}\le ({C}' + \lambda_{\rm min}({A_1}'){\rm id}_{n-1})^{-1},\quad 
    ({C}' + \varepsilon {A_1}')^{-1} \ge  ({C}'+ \varepsilon \lambda_{\rm max}({A_1}'){\rm id}_{n-1})^{-1}, 
    $$
    and so we derive that 
    \begin{align*}
    \big\langle \boldsymbol{C}, ({C}' + \lambda_{\rm min}({A_1}'){\rm id}_{n-1})^{-1} \boldsymbol{C}\big\rangle
    + \varepsilon a^{(1)}_1 > 
    \big\langle \boldsymbol{C}, ({C}'+ \varepsilon \lambda_{\rm max}({A_1}'){\rm id}_{n-1})^{-1} \boldsymbol{C}\big\rangle. 
    \end{align*}
    From this expression, we may derive a contradiction by taking $\varepsilon \to0$ unless $\boldsymbol{C} = (0,\ldots,0)$ as follows. 
    We take $U'\in O(n-1)$ that diagonalizes $C'$ and write 
    $$
    ({U}')^* C'U' = {\rm diag}\, (c_2,\ldots, c_n),\quad c_i \ge0. 
    $$
    By abbreviating $\lambda_{\rm max}({A_1}')= \lambda_{\rm max}$ and $\lambda_{\rm min}({A_1}')= \lambda_{\rm min}$, it follows that 
    \begin{align*}
        &\bigg\langle \widetilde{\boldsymbol{C}}, 
        {\rm diag}\, \big( ( c_2 + \lambda_{\rm min} )^{-1},\ldots, ( c_n + \lambda_{\rm min} )^{-1}  \big) \widetilde{\boldsymbol{C}} \bigg\rangle 
        + \varepsilon a_1^{(1)}\\
        &>
        \bigg\langle \widetilde{\boldsymbol{C}}, 
        {\rm diag}\, \big( ( c_2 + \varepsilon \lambda_{\rm max} )^{-1},\ldots, ( c_n + \varepsilon \lambda_{\rm max} )^{-1}  \big) \widetilde{\boldsymbol{C}} \bigg\rangle ,
    \end{align*}
    where 
    $
    \widetilde{\boldsymbol{C}} 
    = 
    (
        \widetilde{C}_{12}, \ldots 
        \widetilde{C}_{1n}
    )^*
    := ({U}')^* {\boldsymbol{C}} . 
    $
    By expanding the quadratic form, it reveals that 
    $$
    \sum_{j=2}^n ( c_j + \lambda_{\rm min} )^{-1} | \widetilde{C}_{1j} |^2 
    + 
    \varepsilon a_1^{(1)}
    > 
    \sum_{j=2}^n ( c_j + \varepsilon \lambda_{\rm max} )^{-1} | \widetilde{C}_{1j} |^2. 
    $$
    This gives a contradiction for sufficiently small $\varepsilon>0$ unless $|\widetilde{C}_{\color{red}1j}|^2 = 0$ for all $j=2,\ldots,n$  since $\lambda_{\rm min}>0$.  
    Therefore we must have that   
    $$
    | {\boldsymbol{C}} |^2 = | \widetilde{\boldsymbol{C}} |^2 = \sum_{j=2}^n |\widetilde{C}_{\color{red}1j}|^2 = 0
    $$
    which means that $C_{12}=\cdots =C_{1n}=0$. That $C_{11}=0$ follows from \eqref{e:C11}. 
    \fi 
\end{proof}

By using Lemma \ref{l:EqualMatrixGCI} we derive a constraint on the extremizer for the functional form of the Gaussian correlation inequality. 
That is, we investigate the case of equality in the inequality 
\begin{equation}\label{e:FuncGCI6/10}
    \int_{\R^n} h_1(x) h_2(x)\, d\gamma(x) \ge 1
\end{equation}
for all log-concave $h_i$ such that 
\begin{equation}\label{e:Assump/10}
\int_{\R^n} x h_i\, d\gamma = 0,\quad  
\int_{\R^n} h_i\, d\gamma=1.
\end{equation}
As in Subsection \ref{SSec-DeriGCI}, if we let $f_i:= h_i\gamma$ then the Gaussian correlation inequality is translated to the inverse Brascamp--Lieb inequality 
\begin{equation}\label{e:GCIIBL6/10}
    \int_{\R^n} e^{\frac12|x|^2} f_1(x) f_2(x)\, dx \ge (2\pi)^{-\frac{n}2}
\end{equation}
for all ${\rm id}_n$-uniformly log-concave $f_i$ such that 
\begin{equation}\label{e:Class6/10}
\int_{\R^n} x f_i\, dx= 0,\quad  
\int_{\R^n} f_i\, dx=1.
\end{equation}
We denote the covariance matrix of a centered probability density $\rho$ by ${\rm Cov}\, (\rho) := \int_{\R^n} x\otimes x \rho\, dx$. 

\begin{proposition}\label{p:EqualGCI-Step1}
    Suppose that $f_1,f_2$ are 1-uniformly log-concave functions satisfying \eqref{e:Class6/10} and establish equality in \eqref{e:GCIIBL6/10}. 
    Then 
    \begin{equation}\label{e:EqualGCI-Step1}
    {\rm Cov}\, (f_2) = P_{E_{\rm id}(f_1)^\perp}^*P_{E_{\rm id}(f_1)^\perp} + P_{E_{\rm id}(f_1)}^* \overline{\Sigma_2} P_{E_{\rm id}(f_1)}
    \end{equation}
    for some $\overline{\Sigma_2}\le {\rm id}_{E_{\rm id}(f_1)}$, and $E_{\rm id}(f_1):= E_{\rm id}( {\rm Cov}\, (f_1) )$. 
    In other word, 
    \begin{equation}\label{e:EqualGCI-Step2}
        E_{\rm id}(f_1)^\perp \subset E_{\rm id}(f_2).
    \end{equation}
\end{proposition}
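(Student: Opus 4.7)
The plan is to reduce the equality case with general centered $1$-uniformly log-concave $f_1,f_2$ to the equality case for centered Gaussian inputs, where the answer is already provided by the matrix statement of Lemma~\ref{l:EqualMatrixGCI}. The bridge is the iterated Ball's inequality combined with the central limit theorem, exactly as in the proof of Theorem~\ref{t:NonCenterIBL}.

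Under the equality assumption ${\rm BL}(f_1,f_2) = (2\pi)^{-\frac n2}$, I would apply the $a=0$ case of Ball's inequality \eqref{e:BallTypeIn} (equivalently Lemma~\ref{l:Monotone*} specialised to the current datum) to obtain
$$
(2\pi)^{-n} = {\rm BL}(f_1,f_2)^2 \ge (2\pi)^{-\frac n2}\, {\rm BL}(\widetilde{f}_1^{(2)}, \widetilde{f}_2^{(2)}).
$$
Since the rescaled self-convolution preserves centering and $1$-uniform log-concavity (see the discussion after Lemma~\ref{l:Monotone*}), Theorem~\ref{t:CenteredIBL} supplies the reverse bound ${\rm BL}(\widetilde{f}_1^{(2)}, \widetilde{f}_2^{(2)}) \ge (2\pi)^{-\frac n2}$. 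Hence both members equal $(2\pi)^{-\frac n2}$, and iterating gives ${\rm BL}(\widetilde{f}_1^{(2^k)}, \widetilde{f}_2^{(2^k)}) = (2\pi)^{-\frac n2}$ for every $k\in\mathbb{N}$.

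Let $\Sigma_i := {\rm Cov}(f_i)$; the $1$-uniform log-concavity forces $\Sigma_i \le {\rm id}_n$, equivalently $\Sigma_i^{-1} \ge {\rm id}_n$. The central limit theorem yields $\widetilde{f}_i^{(2^k)} \to \gamma_{\Sigma_i}$ pointwise almost everywhere, and since $\Sigma_1^{-1} + \Sigma_2^{-1} \ge 2\,{\rm id}_n > {\rm id}_n$, the product $e^{|x|^2/2}\gamma_{\Sigma_1}\gamma_{\Sigma_2}$ is integrable. Fatou's lemma then gives $(2\pi)^{-\frac n2} \ge {\rm BL}(\gamma_{\Sigma_1}, \gamma_{\Sigma_2})$, while the reverse inequality is Theorem~\ref{t:CenteredIBL} applied to $\gamma_{\Sigma_i} \in \mathcal{F}^{(o)}_{{\rm id}_n,\infty}(\R^n)$. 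Thus ${\rm BL}(\gamma_{\Sigma_1},\gamma_{\Sigma_2}) = (2\pi)^{-\frac n2}$, which by direct Gaussian computation is precisely equality in the matrix inequality \eqref{e:MatrixGCI} with $A_i := \Sigma_i^{-1}$.

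At this point Lemma~\ref{l:EqualMatrixGCI} applies and forces $E_{\rm id}(\Sigma_1^{-1})^\perp \subset E_{\rm id}(\Sigma_2^{-1})$. Since $\Sigma v = v$ if and only if $\Sigma^{-1} v = v$, we have $E_{\rm id}(\Sigma_i^{-1}) = E_{\rm id}(\Sigma_i) = E_{\rm id}(f_i)$, which gives \eqref{e:EqualGCI-Step2}. Inverting the block decomposition of $\Sigma_2^{-1}$ supplied by Lemma~\ref{l:EqualMatrixGCI} along the orthogonal splitting $\R^n = E_{\rm id}(f_1)^\perp \oplus E_{\rm id}(f_1)$, and using $\Sigma_2 \le {\rm id}_n$ on the complementary block, yields \eqref{e:EqualGCI-Step1} with $\overline{\Sigma_2} := \overline{A_2}^{-1} \le {\rm id}_{E_{\rm id}(f_1)}$. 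The main step to verify with care is the CLT-plus-Fatou argument: one needs pointwise convergence $\widetilde{f}_i^{(2^k)} \to \gamma_{\Sigma_i}$ together with enough control against the exponentially growing weight $e^{|x|^2/2}$, but since we only need a one-sided Fatou bound and the limit is genuinely integrable against the weight (thanks to the strict inequality $\Sigma_1^{-1} + \Sigma_2^{-1} > {\rm id}_n$), this step proceeds exactly as in the analogous passage at the end of the proof of Theorem~\ref{t:NonCenterIBL}.
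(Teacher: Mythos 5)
Your proposal is correct and follows essentially the same route as the paper: the paper's proof also revisits the iteration of Proposition~\ref{p:RegIBL} (combined with the Remark at the end of Subsection~3.2, which permits running the iteration without the regularity hypothesis $H_i<\infty$ since here the extremizer and positivity of the constant are given a priori), and concludes via the CLT-plus-Fatou passage that the Gaussians $\gamma_{\Sigma_i}$ with $\Sigma_i = {\rm Cov}(f_i)$ are themselves extremizers, whence Lemma~\ref{l:EqualMatrixGCI} applies. You have merely unpacked the paper's terse cross-references into the explicit Ball-inequality iteration (correctly using the $a=0$ case of \eqref{e:BallTypeIn} rather than the finite-$H_i$ Lemma~\ref{l:Monotone*}), and your justification of the Fatou step via $\Sigma_1^{-1}+\Sigma_2^{-1}\ge 2\,{\rm id}_n$ is sound.
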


\begin{proof}
    We revisit the proof of Proposition \ref{p:RegIBL}. In particular, from \eqref{e:Iteration} as well as the Remark in the end of the subsection 3.2, that $f_1,f_2$ are the extremizer for \eqref{e:GCIIBL6/10} yields that $\gamma_{\Sigma_1},\gamma_{\Sigma_2}$ are also the extremizer for \eqref{e:GCIIBL6/10} where $\Sigma_i:= {\rm Cov}\, (f_i)$. 
    This means that $A_i:= \Sigma_i^{-1}$ establish equality in \eqref{e:MatrixGCI}. 
    Thus, applying Lemma \ref{l:EqualMatrixGCI}, we see that
    $$
    \Sigma_2^{-1} = 
    P^*_{ E_{\rm id}(\Sigma_1^{-1})^\perp }P_{ E_{\rm id}(\Sigma_1^{-1})^\perp }
    + 
    P^*_{ E_{\rm id}(\Sigma_1^{-1}) } (\overline{\Sigma_2})^{-1} P_{ E_{\rm id}(\Sigma_1^{-1}) }
    $$
    for some $(\overline{\Sigma_2})^{-1} \ge {\rm id}_{E_{\rm id}(\Sigma_1^{-1})}$. 
    Since $E_{\rm id}(\Sigma_1^{-1}) = E_{\rm id}(\Sigma_1)$, this concludes that 
    $$
    \Sigma_2 = 
    P^*_{ E_{\rm id}(\Sigma_1)^\perp }P_{ E_{\rm id}(\Sigma_1)^\perp }
    + 
    P^*_{ E_{\rm id}(\Sigma_1) } \overline{\Sigma_2} P_{ E_{\rm id}(\Sigma_1) }. 
    $$
\end{proof}

At this stage, we are led to the following question. 
For a centered probability measure $\mu$, let $E:= E_{\rm id}({\rm Cov}\, (\mu))$ that may be $\{0\}$ or $\R^n$. In view of the spectral decomposition \eqref{e:DetailedSpecDecomp}, 
\begin{equation}\label{e:AssumpCov}
    {\rm Cov}\, (\mu) = P^*_{E}P_{E} + P_{E^\perp}^* \overline{M} P_{E^\perp}
\end{equation}
holds true for some $\overline{M}$ that has no eigenvalue 1. 
Then is this splitting property inhered to the pointwise behavior of $\mu$? 
More precisely, is it true that there exists some probability measure $\overline{\mu}$ on $E^\perp$ such that 
\begin{equation}\label{e:MeasureSplit}
    d\mu(x) = d\gamma(x_E) d\overline{\mu}(x_{E^\perp}),\quad \text{a.e.} \quad x = x_E+x_{E^\perp} \in \R^n = E\oplus E^\perp? 
\end{equation}
It is not difficult to see that the answer is in general negative. 
In fact, by considering an example $d\mu(x) = \frac{1}{| r_n\mathbf{B}^n_2 |} \mathbf{1}_{ r_n\mathbf{B}^n_2}$ with an appropriate $r_n>0$, we have that ${\rm Cov}\,(\mu) = {\rm id}_n$ while all variables correlates with other variables and so \eqref{e:MeasureSplit} does not occur. 
However, it is still curious to ask whether it is possible to derive the splitting property \eqref{e:MeasureSplit} by assuming some extra assumption on $\mu$. 
For instance, for our purpose, $\mu$ has a strong property that it is 1-uniformly log-concave. 
We indeed give a positive answer to the above question under the 1-uniform log-concavity.
To this end, we employ a strong measure-splitting result due to Gigli, Ketterer, Kuwada, and Ohta \cite{GKKO}. 
For a probability measure $\mu$ with finite second moment, define the relative entropy functional by 
$$
{\rm Ent}_{\mu}(\phi):= \int_{\R^n} \phi\log\, \phi\, d\mu,  
$$
where the test function $\phi$ is such that $\int_{\R^n} \phi\, d\mu =1$ and $\int_{\R^n} |x|^2 \phi\, d\mu <\infty$, 
otherwise ${\rm Ent}_{\mu}(\phi):= \infty$. 
For $\kappa \in \R$, we say that $(\R^n,|\cdot|,  \mu)$ satisfies the curvature-dimension condition ${\rm CD}(\kappa,\infty)$ if ${\rm Ent}_\mu$ is $\kappa$-convex with respect to the $L^2$-Wasserstein metric; see \cite{BGL,GKKO, Vill} for more details. It might be worth to mentioning that if $\mu$ is regular enough so that $d\mu = e^{-V}dx$ for some $V \in C^2(\R^n)$, then the ${\rm CD}(\kappa,\infty)$-condition is simply equivalent to  $\nabla^2 V \ge \kappa{\rm id}_n$; see \cite{BGL, Vill}. 
However, that the definition of ${\rm CD}(\kappa,\infty)$ does not require the differentiability of $\mu$ is crucial for our purpose. 
\begin{theorem}[Gigli, Ketterer, Kuwada, and Ohta \cite{GKKO}]\label{t:GKKO}
    Let $\kappa>0$ and assume that $(\R^n,|\cdot|, \mu)$ satisfies the ${\rm CD}(\kappa,\infty)$-condition. 
    If there exist $\phi_1,\ldots, \phi_k$ such that 
    \begin{equation}\label{e:EqualPoi}
        \langle \phi_i,\phi_j\rangle_{L^2(\mu)}=\delta_{ij},\quad 
        \int_{\R^n} |\phi_i|^2\, d\mu - \left( \int_{\R^n} \phi_i \, d\mu\right)^2
        = 
        \frac1{\kappa} \int_{\R^n} |\nabla \phi_i|^2\, d\mu
    \end{equation}
    for some $k \in \{1,\ldots,n\}$ and all $i=1,\ldots,k$, 
    then there exists a subspace $V$ of $\R^n$ with ${\rm dim}\, V=k$ such that $d\mu(x) = d\gamma_{\kappa^{-1}{\rm id}_n}(x_V) d\overline{\mu}(x_{V^\perp})$ for some $\overline{\mu}$. 
\end{theorem}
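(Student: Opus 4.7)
The plan is to exploit the rigidity in the Bakry--Émery Bochner identity associated with the $\mathrm{CD}(\kappa,\infty)$ condition. This condition implies the sharp Poincaré inequality $\mathrm{Var}_\mu(\phi) \le \kappa^{-1}\int|\nabla\phi|^2 \, d\mu$, and the hypothesis \eqref{e:EqualPoi} is precisely that each $\phi_i$ saturates it. A variational argument (Lagrange multipliers on the Rayleigh quotient, plus orthogonality) then forces every $\phi_i$ to be an eigenfunction of the weighted Laplacian $\Delta_\mu$ with eigenvalue $-\kappa$, interpreted in the sense of the Cheeger energy since $\mu$ need not be absolutely continuous.

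Next I would invoke the $\Gamma_2$-calculus: pairing the Bochner-type inequality
\begin{equation*}
\tfrac12 \Delta_\mu |\nabla \phi|^2 - \langle \nabla \phi, \nabla \Delta_\mu \phi\rangle \;\ge\; |\nabla^2 \phi|^2 + \kappa\,|\nabla\phi|^2
\end{equation*}
(which encodes $\mathrm{CD}(\kappa,\infty)$ together with the flatness of the ambient Euclidean space) with $\phi = \phi_i$ and integrating against $d\mu$, the equality case of Poincaré is equivalent to equality throughout Bochner, which forces $\nabla^2 \phi_i \equiv 0$. Therefore each $\phi_i$ must be affine, $\phi_i(x) = \langle v_i, x\rangle + c_i$ for some $v_i \in \R^n$ and $c_i \in \R$. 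The orthonormality of $(\phi_i)$ in $L^2(\mu)$ then guarantees that $V := \mathrm{span}\{v_1, \ldots, v_k\}$ has dimension exactly $k$.

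Finally, writing formally $d\mu = e^{-W}\,dx$, the eigenvalue equation $\Delta_\mu\phi_i = -\kappa\phi_i$ applied to an affine $\phi_i$ reads $\partial_{v_i} W(x) = \kappa(\langle v_i, x\rangle + c_i)$; doing this for an orthonormal basis of $V$ and using that affine eigenfunctions form a commuting family shows that $W$ is quadratic along $V$ with Hessian $\kappa\,\mathrm{id}_V$ and has no mixed dependence with $V^\perp$. This yields the advertised splitting $d\mu(x) = d\gamma_{\kappa^{-1}\mathrm{id}_n}(x_V) \, d\overline{\mu}(x_{V^\perp})$ for some probability measure $\overline{\mu}$ on $V^\perp$.

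The main obstacle is that in the $\mathrm{CD}(\kappa,\infty)$ framework $\mu$ may be highly singular: $\Delta_\mu$ is defined only through the Cheeger energy, $W = -\log\mu$ is not a classical object, and the Bochner rigidity must be run through the abstract $\Gamma_2$-calculus of the associated heat semigroup rather than by direct computation. The conclusion $\nabla^2\phi_i \equiv 0$ must be made rigorous in a suitable weak sense, and the pointwise affine structure of the $\phi_i$ must be upgraded to an honest product decomposition of $\mu$. In the literature this upgrade is carried out either through a needle decomposition / localization argument or by exploiting the full RCD structure of $\mathrm{CD}(\kappa,\infty)$ measures on $\R^n$ via a one-dimensional splitting theorem; supplying this technical bridge is precisely the substantive content of \cite{GKKO}.
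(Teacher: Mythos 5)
Your proposal and the paper's Appendix both rest on Bochner rigidity under $\mathrm{CD}(\kappa,\infty)$, but they proceed slightly differently, and it is worth being precise about what the paper actually proves: it does not prove Theorem~\ref{t:GKKO} in full generality; it cites \cite{GKKO} for the statement and in the Appendix gives only a \emph{formal} derivation in the special case where the extremizers $\phi_i$ are already assumed linear and $\mu=\rho\,dx$ with $\log\rho\in C^2(\R^n)$. Your sketch is a notch more ambitious. You first observe that each $\phi_i$ minimizes the Rayleigh quotient, hence is an eigenfunction with eigenvalue $-\kappa$; you then integrate the Bochner inequality with the Euclidean $|\nabla^2\phi|^2$ term against $d\mu$ and use the Poincar\'e saturation to force $\int|\nabla^2\phi_i|^2\,d\mu=0$, from which affineness of $\phi_i$ and $\dim V=k$ follow; only then do you feed the eigenvalue equation for the affine $\phi_i$ back into $W=-\log\rho$ to split off the Gaussian factor. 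The paper skips the middle step entirely (it takes $\phi=\langle u,\cdot\rangle$ as given, consistent with how the theorem is actually invoked in Corollary~\ref{cor:GKKO}), and instead works with the weak Bochner inequality tested against an auxiliary function $\psi$; after substituting $\mathcal{L}_\mu\phi=-\phi$, $|\nabla\phi|^2=1$, and integrating by parts it lands directly on the first-order identity $\langle u,\nabla\rho\rangle=-\langle u,x\rangle\rho$, which integrates to the Gaussian factor. So your version explains \emph{why} the extremizers must be affine, which is the part of the theorem the paper's Appendix does not address; the paper's version is a leaner computation once linearity is assumed. Both arguments freely integrate by parts and do not specify domains, and, as you correctly emphasize, the genuine content of \cite{GKKO} is to make all of this rigorous when $\mu$ is not assumed smooth, which neither you nor the Appendix attempt. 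As a blind reconstruction of the paper's formal proof your proposal is sound and, if anything, fills in the one step the paper elides.
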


We remark that the authors in \cite{GKKO} established this result on  metric measure spaces so-called RCD$(\kappa,\infty)$, and this is the point on their paper. 
In fact, if one is ready to impose a smooth structure, the same result has been established by Cheng and Zhou \cite{CZ} before their result. 
For our purpose, we would like to apply this type of results with $\mu = \mathbf{1}_{K}\gamma$, which is not a smooth function on $\R^n$ precisely speaking, for a convex set $K$. We thus invoke the former result. 

\begin{corollary}\label{cor:GKKO}
    Suppose that $\mu$ is a centered and 1-uniform log-concave probability measure and let $E:= E_{\rm id}({\rm Cov}\, (\mu))$. 
    Then the $\mu$ splits off the standard Gaussian on $E$: \eqref{e:MeasureSplit}. 
\end{corollary}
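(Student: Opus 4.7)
The plan is to apply Theorem \ref{t:GKKO} with $\kappa = 1$, using the linear coordinates along $E$ as the test functions $\phi_i$, and then to identify the splitting subspace it produces with $E$ itself. Since $\mu$ is $1$-uniformly log-concave, the metric measure space $(\R^n, |\cdot|, \mu)$ satisfies the ${\rm CD}(1, \infty)$ condition; as emphasized in the paragraph preceding Theorem \ref{t:GKKO}, this interpretation is meaningful without any differentiability of $\mu$, which is precisely why we invoke the metric-measure framework of \cite{GKKO} rather than \cite{CZ}.

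Let $k := \dim E$ and fix an orthonormal basis $u_1, \ldots, u_k$ of $E$; set $\phi_i(x) := \langle u_i, x\rangle$. The centering of $\mu$ gives $\int \phi_i\, d\mu = 0$, while $u_i, u_j \in E$ together with the defining property of $E$ yield ${\rm Cov}(\mu) u_j = u_j$, so
$$
\int \phi_i \phi_j\, d\mu = \langle u_i, {\rm Cov}(\mu) u_j\rangle = \delta_{ij}.
$$
In particular ${\rm Var}_\mu(\phi_i) = 1$, while $\nabla \phi_i \equiv u_i$ gives $\int |\nabla \phi_i|^2\, d\mu = 1$, so \eqref{e:EqualPoi} holds at $\kappa = 1$. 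Theorem \ref{t:GKKO} then furnishes a $k$-dimensional subspace $V$ and a probability measure $\overline{\mu}$ on $V^\perp$ with $d\mu(x) = d\gamma(x_V)\, d\overline{\mu}(x_{V^\perp})$.

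It remains to identify $V$ with $E$. Expanding $\int x \otimes x\, d\mu$ against the product structure and using $\int x_V\, d\gamma(x_V) = 0$, the cross terms vanish, and the $V$-block of ${\rm Cov}(\mu)$ equals ${\rm id}_V$. Hence every $v \in V$ is a unit eigenvector of ${\rm Cov}(\mu)$, which gives $V \subseteq E_{\rm id}({\rm Cov}(\mu)) = E$; equality of dimensions forces $V = E$, completing the proof. I expect the only subtle point to be confirming ${\rm CD}(1, \infty)$ in the non-smooth setting relevant to the application ($\mu = \gamma(K)^{-1} \mathbf{1}_K\, \gamma$ for a convex body $K$), but this is standard for log-concave measures and is the precise reason GKKO's metric-measure formulation is needed.
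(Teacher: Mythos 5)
Your proof is correct and follows essentially the same route as the paper's: verify ${\rm CD}(1,\infty)$ from $1$-uniform log-concavity, take $\phi_i(x) = \langle u_i, x\rangle$ for an orthonormal basis of $E$ to check the equality case \eqref{e:EqualPoi}, invoke Theorem~\ref{t:GKKO}, and identify the resulting splitting subspace $V$ with $E$ via the block structure of ${\rm Cov}(\mu)$. In fact you spell out the final identification ($V$-block equals ${\rm id}_V$, hence $V \subseteq E$, hence $V = E$ by dimension count) more explicitly than the paper does, which simply states it as a consequence of $E = E_{\rm id}({\rm Cov}(\mu))$.
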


\begin{proof}
    Let $k:= {\rm dim}\, E$. If $k=0$ then there is nothing to prove, so we assume $k\ge1$. 
    It is well-known that if $\mu$ is 1-uniform log-concave then the CD$(1,\infty)$-condition is satisfied; see \cite{BGL, Vill} for instance. 
    From \eqref{e:AssumpCov}, we have that 
    $$
    \int_{\R^n} \langle u_i,x\rangle \langle u_j,x\rangle\, d\mu = \delta_{ij},
    $$
    for all $i,j=1,\ldots,k$, where $u_1,\ldots, u_{k}$ are orthonormal basis of $E$. 
    Since $\mu$ is centered, by letting $\phi_i(x):= \langle u_i,x\rangle$, this means that the assumptions in \eqref{e:EqualPoi} are satisfied with $\kappa=1$. 
    Therefore the $\mu$ must split and have the form of $d\mu(x) = d\gamma(x_V) d\overline{\mu}(x_{V^\perp})$ for a.e. $x = x_V+x_{V^\perp} \in \R^n$ and for some $k$-dim subspace $V$ and $\overline{\mu}$. 
    It remains to show that $V$ is indeed $E$. 
    However, this is a consequence of $E= E_{\rm id}({\rm Cov}\, (\mu))$. 
\end{proof}

\begin{remark}
    One may wonder if there is a simpler way to prove Corollary \ref{cor:GKKO} without appealing to the too strong result Theorem \ref{t:GKKO}. 
    This is indeed the case if the measure $\mu$ is regular enough so that $\mu = e^{-V}dx$ with $V\in C^2(\R^n)$. 
    One way of seeing this is to exploit the case of equality for the variance Brascamp--Lieb inequality 
    $$
    \int_{\R^n} | \langle u,x\rangle |^2\, d\mu \le \int_{\R^n} \langle u, (\nabla^2V)^{-1} u\rangle \, d\mu. 
    $$
    Another way is to investigate the case of equality of the Bochner inequality. The letter was kindly pointed out by Shin-ich Ohta and we appreciate to him. We will give more details about this in the Appendix. 
\end{remark}

Let us conclude the proof of Theorem \ref{t:EqualityCase}. 
We indeed characterize the case of equality in the functional form \eqref{e:FuncGCI6/10}. 
\begin{theorem}
    Among log-concave $h_1,h_2$ satisfying 
    {$$
    \int_{\R^n} x h_1\, d\gamma = \int_{\R^n} x h_2\, d\gamma,\quad  
\int_{\R^n} h_1\, d\gamma = \int_{\R^n} h_2\, d\gamma=1,
    $$}
    equality in \eqref{e:FuncGCI6/10} is achieved 
    if and only if 
    {\eqref{e:Assump/10} holds}
    and 
    \begin{equation}\label{e:EqualityFuncGCI}
    h_1(x) = h_1(x_{E^\perp}),\quad h_2(x) = h_2(x_{E}),\quad \textrm{a.e.}\quad x = x_E+x_{E^\perp} \in \R^n = E\oplus E^\perp.
    \end{equation}
    holds for $E = E_{\rm id}(h_1\gamma)$.
\end{theorem}

\begin{proof}
    It is evident that equality in \eqref{e:FuncGCI6/10} is achieved if $h_1,h_2$ satisfy \eqref{e:EqualityFuncGCI}, and thus we focus on showing the necessity part. 
    {To this end, we first note that the second part of Theorem \ref{t:NonCenterIBL} yields \eqref{e:Assump/10}.} Thus, we have only to consider the case of centered $h_1,h_2$. 
    If we let $f_i = h_i\gamma$ then $f_1,f_2$ satisfy the assumption in Proposition \ref{p:EqualGCI-Step1}.  
    Hence, we have the inclusion \eqref{e:EqualGCI-Step2}. 
    On the other hand, by applying Corollary \ref{cor:GKKO} with $d\mu =f_idx$, we see that 
    \begin{equation}\label{e:ApplySplit}
    f_i(x) = \gamma(x_{E_{\rm id}(f_i)})d\overline{\mu}_i( x_{E_{\rm id}(f_i)^\perp} ),\quad \text{a.e.}\quad x = x_{E_{\rm id}(f_i)}+x_{E_{\rm id}(f_i)^\perp} \in \R^n 
    \end{equation}
    for $i=1,2$. Since $E_{\rm id}(f_1)^\perp \subset E_{\rm id}(f_2)$, we may decompose 
    $$
    E_{\rm id}(f_2) = E_{\rm id}(f_1)^\perp \oplus W,\quad 
    \R^n = E_{\rm id}(f_1)^\perp \oplus W \oplus E_{\rm id}(f_2)^\perp, 
    $$
    where\footnote{For subspaces $E\subset V \subset \R^n$, we denote the orthogonal space of $E$ in $V$ by $E^{\perp_V}$. } $W:= \big( E_{\rm id}(f_1)^\perp \big)^{\perp_{ E_{\rm id}(f_2)}} = \big(E_{\rm id}(f_1)^\perp \oplus  E_{\rm id}(f_2)^\perp\big)^\perp$. 
    Hence, \eqref{e:ApplySplit} with $i=2$ may be read as 
    \begin{align*}
        &f_2(x)dx 
        = 
        d\gamma(x_{E_{\rm id}(f_1)^\perp}) d\gamma(x_W) d\overline{\mu}_2(x_{E_{\rm id}(f_2)^\perp}),\\
        &\qquad \text{a.e.} \quad x= x_{E_{\rm id}(f_1)^\perp} + x_W + x_{E_{\rm id}(f_2)^\perp} \in \R^n.
    \end{align*}
    Since $W\oplus E_{\rm id}(f_2)^\perp = E_{\rm id}(f_1)$, we may write $x_W + x_{E_{\rm id}(f_2)^\perp}=x_{E_{\rm id}(f_1)}$ and thus 
    $$
    f_2(x)dx
    = 
    d\gamma(x_{E_{\rm id}(f_1)^\perp}) d\overline{\mu}_{12}(x_{E_{\rm id}(f_1)}),\quad 
    \text{a.e.} \quad x = x_{E_{\rm id}(f_1)}+x_{E_{\rm id}(f_1)^\perp} \in \R^n, 
    $$
    where $d\overline{\mu}_{12}(x_{E_{\rm id}(f_1)}):=d\gamma(x_W) d\overline{\mu}_2(x_{E_{\rm id}(f_2)^\perp}) $. 
    By comparing this with \eqref{e:ApplySplit} with $i=1$, we may conclude \eqref{e:EqualityFuncGCI} with $E = E_{\rm id}(f_1)$.
\end{proof}

By choosing $h_i:= \frac{1}{\gamma(K_i)}\mathbf{1}_{K_i}$, \eqref{e:EqualityFuncGCI} may be read as \eqref{e:EqualGCISet} and thus the proof of Theorem \ref{t:EqualityCase} is completed. 

\if0

\section{Appendix: Remarks on Theorem \ref{t:GenCor}}

   \begin{remark}
    If one drops the assumption $\Sigma_0^{-1} \ge \Sigma_1^{-1}, \dots, \Sigma_m^{-1}$ in the above statement, then the expecting inequality \eqref{e:GCI-General} with constant $1$ (or even some universal constant) will not always hold true. 
    To see this, 
    let us consider 
    $$
    \Sigma_0 = \Sigma_1 = R{\rm id}_n,\quad \Sigma_2 = \begin{pmatrix}
        {\rm id}_{n-1} & 0\\ 
        0 & \delta
    \end{pmatrix}
    $$
    for arbitrary $R\gg1$ and $\delta \ll 1$. Then $\Sigma_0^{-1} \not\ge \Sigma_2^{-1}$. 
    We take arbitrary convex body $K_1$ such that $|K_1|=1$ and $K_2 = \mathbb{R}^{n-1} \times [-\sqrt{\delta},\sqrt{\delta}]$. 
    Then 
    \begin{align*}
        \gamma_{\Sigma_2}(K_2)
        &= 
        \frac{1}{\sqrt{2\pi}} \int_{\mathbb{R}} \mathbf{1}_{[-1,1]}(x_n) e^{-\frac12x_n^2}\, dx_n = c. 
    \end{align*}
    On the other hand, 
    \begin{align*}
        \frac{ \gamma_{\Sigma_0}(K_1\cap K_2) }{\gamma_{\Sigma_1}(K_1)}
        &= 
        \frac{\int_{\mathbb{R}^n} \mathbf{1}_{K_1 \cap ( \mathbb{R}^{n-1}\times [-\sqrt{\delta},\sqrt{\delta}] )}(x) e^{-\frac1{2R} |x|^2 }\, dx }{\int_{\mathbb{R}^n} \mathbf{1}_{K_1}(x) e^{-\frac1{2R} |x|^2 }\, dx }\\
        &\to 
        \frac{|K_1 \cap ( \mathbb{R}^{n-1}\times [-\sqrt{\delta},\sqrt{\delta}] ) |}{|K_1|}\quad (R\to \infty)\\
        &= 
        |K_1 \cap ( \mathbb{R}^{n-1}\times [-\sqrt{\delta},\sqrt{\delta}] ) | 
        \\
        &\sim
        \sqrt{\delta}|K_1 \cap \langle e_n\rangle^\perp |\quad (\delta\to0).
    \end{align*}
    Therefore for large enough $R \gg 1$ and small enough $\delta \ll 1$, the ratio
    $$
    \frac{ \gamma_{\Sigma_0}(K_1\cap K_2) }{\gamma_{\Sigma_1}(K_1)\gamma_{\Sigma_2}(K_2)}
    $$
    is close to 0, which means that Theorem \ref{e:GCI-General} does not follow in this case. 
\end{remark}

\begin{remark}
    As far as we are aware, even when $m=2$ and $K_1,K_2$ are symmetric, Theorem 2.1 does not seem to follow directly from the known Gaussian correlation inequality (i.e. the case $\Sigma_0 =\Sigma_1=\Sigma_2$) in general. 
    To see this, let us consider the case $n=2$ and $m=2$. 
    Then the known Gaussian correlation inequality and the change of variables imply that 
    $$
    \gamma_{\Sigma_0}(K_1 \cap K_2) 
    \ge
    \gamma_{\Sigma_0}(K_1) \gamma_{\Sigma_0}(K_2)
    =
    \gamma_{\Sigma_1}(\Sigma_1^\frac12 \Sigma_0^{-\frac12} K_1)\gamma_{\Sigma_2}(\Sigma_2^\frac12 \Sigma_0^{-\frac12} K_2). 
    $$
    If $K_1,K_2$ has a property that $\Sigma_1^\frac12 \Sigma_0^{-\frac12} K_1 \supset K_1$ and $\Sigma_2^\frac12 \Sigma_0^{-\frac12} K_2 \supset K_2$, then we may reduce the \eqref{e:GCI-General}. 
    However this is not always the case. 
    Actually in the case that 
    $$
    \Sigma_0^{-1} = \begin{pmatrix} 9 & 0\\ 0 & 4 \end{pmatrix},\quad 
     \Sigma_1^{-1} = \begin{pmatrix} 9 & 0 \\ 0 & 1 \end{pmatrix},\quad 
     \| x \|_{K_1} = \langle x, Ex\rangle ,\; E=\begin{pmatrix} 3 & 1\\ 1 & 3
     \end{pmatrix},  
    $$
    we may see that $\Sigma_1^\frac12 \Sigma_0^{-\frac12} K_1 \not\supset K_1$. 
\end{remark}

\fi

\if0

\section{Appendix:some variant of IBL{\color{red}最後には消す}}

Let $f_1, f_2 \in \mathcal{F}_{{\rm id}_n, \infty}^{(e)}$ with $f_1(0)=f_2(0)=1$. 
Put $h_i=e^{-\frac12|x|^2} - f_i(x)$ for $i=1,2$. Remark that $h_i\ge 0$ in virtue of evenness and 1-uniformly log-concavity of $f_i$ and $f_i(0)=0$. 
Then 
\begin{align*}
&\int_{\R^n} e^{\frac12 |x|^2} h_1(x) h_2(x) \,dx - (2\pi)^{-\frac n2} \int_{\R^n} h_1\, dx_1 \int_{\R^n} h_2\, dx_2
\\
&=
\int_{\R^n} e^{\frac12 |x|^2} f_1(x) f_2(x) \,dx - (2\pi)^{-\frac n2} \int_{\R^n} f_1\, dx_1 \int_{\R^n} f_2\, dx_2. 
\end{align*}

\begin{proof}
    Direct calculations. 
    \begin{align*}
&\int_{\R^n} e^{\frac12 |x|^2} h_1(x) h_2(x) \,dx - (2\pi)^{-\frac n2} \int_{\R^n} h_1\, dx_1 \int_{\R^n} h_2\, dx_2
\\
&=
\int_{\R^n} e^{-\frac12|x|^2}\, dx - \int_{\R^n} f_1\, dx - \int_{\R^n} f_2\, dx + \int_{\R^n} e^{\frac12 |x|^2} f_1(x) f_2(x)\, dx
\\
&
- (2\pi)^{-\frac n2} 
\left( \left( \int_{\R^n} e^{-\frac12 |x|^2}\, dx \right)^2 - \int_{\R^n} e^{-\frac12 |x|^2}\, dx \int_{\R^n} f_1\, dx - \int_{\R^n} e^{-\frac12 |x|^2}\, dx \int_{\R^n} f_2\, dx  + \int_{\R^n} f_1\, dx \int_{\R^n} f_2\, dx\right)
\\
&=
\int_{\R^n} e^{\frac12 |x|^2} f_1(x) f_2(x) \,dx - (2\pi)^{-\frac n2} \int_{\R^n} f_1\, dx_1 \int_{\R^n} f_2\, dx_2. 
\end{align*}
\end{proof}

\begin{remark}
    To obtain this identity, we do not use any property of $f_i$ and $h_i$, and just only need integrability of them. 
\end{remark}

From this we see that 
$$
\inf_{ h_1, h_2 } \frac{\int_{\R^n} e^{\frac12 |x|^2} h_1(x) h_2(x) \,dx}{\int_{\R^n} h_1\, dx_1 \int_{\R^n} h_2\, dx_2}
= (2\pi)^{-\frac n2}, 
$$
where the infimum is taken over all nonnegative and bounded $h_1, h_2$ such that $h_1, h_2 \in L^1(\R^n)$ and there exists some $c_i>0$ such that $e^{-\frac12|x|^2} - c_i h(x) $ is nonnegative, even and $1$-uniformly log-concave. 

It is worth to note that 
$$
\inf_{ g_1, g_2 } \frac{\int_{\R^n} e^{\frac12 |x|^2} g_1(x) g_2(x) \,dx}{\int_{\R^n} g_1\, dx_1 \int_{\R^n} g_2\, dx_2}
= (2\pi)^{-\frac n2}
$$
also holds true, where $g_1, g_2$ are centered Gaussian satisfying the above conditions. 
Especially we may choose $g_1 = g_2= \frac12 e^{-\frac12 |x|^2}$.

\fi

\section*{Appendix}

\subsection{Proof of Corollary \ref{cor:Translate}}

First, we observe that any convex body $K \subset \R^n$ has some point $b_0 \in K$ such that ${\rm bar}_\gamma(K-b_0)=0$. 
To see this, let 
$$
\Psi_K(b) := \int_{K} x  \frac{e^{\langle x, b \rangle}\, d\gamma(x)}{\int_K e^{\langle\cdot, b \rangle}\, d\gamma}, \;\;\; b \in K. 
$$
It suffices to show that $\Psi_K(b) \in K$ for any $b \in K$. In fact, since $K$ is a convex body and $\Phi_K$ is continuous, thes together with Brouwer's fixed-point theorem yield the existence of $b_0 \in K$ satisfying $\Psi_K(b_0) = b_0$, which means ${\rm bar}_\gamma(K-b_0)=0$. 
To see $\Psi_K(b) \in K$, we may suppose that $b \in {\rm int}(K)$ due to continuity of $\Psi_K$. 
Then the Minkowski functional $\| \cdot \|_{K-b}$ is well-defined since $0 \in {\rm int}(K-b)$. Thus 
\begin{align*}
    \| \Psi_K(b) - b \|_{K-b}
    &=
    \left\| \int_{K} (x-b)  \frac{e^{\langle x, b \rangle}\, d\gamma(x)}{\int_K e^{\langle\cdot, b \rangle}\, d\gamma} \right\|_{K-b}
    =
    \left\| \int_{K-b} x  \,\frac{d\gamma(x)}{\gamma(K-b)} \right\|_{K-b}
    \\
    &\le
    \int_{K-b} \| x\|_{K-b}  \,\frac{d\gamma(x)}{\gamma(K-b)}
    \le 1, 
\end{align*}
which implies $\Psi_K(b) -b \in K-b$, that is, $\Psi_K(b) \in K$. 

With this in mind, let us show Corollary \ref{cor:Translate}. 
On the one hand, the above argument allows us to take $b_1 \in K_1$ and $b_2 \in K_2$ satisfying ${\rm bar}_\gamma(K_1-b_1)={\rm bar}_\gamma(K_2-b_2)=0$, and thus apply Theorem \ref{t:NonSymGCI} to see that 
$$
\gamma\big( ( K_1 - b_1) \cap ( K_2 - b_2 ) \big) \ge 
\gamma( K_1 - b_1 )\gamma ( K_2 - b_2 ). 
$$ 
If we have equality here, then $\{ X \in K_i - b_i \}$, $i=1,2$, are independent, which leads to our desired assertion. 
Hence, we suppose that the inequality is strict: $\Phi_{K_1,K_2}(-b_1,-b_2)>1$, where 
$$
\Phi_{K_1,K_2}(a_1,a_2):= \frac{\gamma\big( ( K_1 + a_1) \cap ( K_2 + a_2 ) \big)}{ 
\gamma( K_1 + a_1 )\gamma ( K_2 + a_2 )}. 
$$
On the other hand, by taking $a_1,a_2$ with $|a_1 - a_2| \to \infty$, we obtain $\Phi_{K_1,K_2}(a_1,a_2)\to 0$ since $K_1,K_2$ are convex bodies which allows $K_1+a_1$ not to intersect with $K_2+a_2$. 
Thus, by continuity of $\Phi_{K_1,K_2}$, there must exist $a_1,a_2$ for which $\Phi_{K_1,K_2}(a_1,a_2) = 1$, and hence it means independence of the events $\{X\in K_1 + a_1\}$ and $\{X\in K_2 + a_2\}$.

\subsection{Formal proof of Theorem \ref{t:GKKO} for linear $\phi_i$ and regular $\mu$}
In this appendix we give a formal proof of Theorem \ref{t:GKKO} in the special case that $\phi_1,\ldots,\phi_k$ are linear functions and $\mu$ is regular in the sense that $d\mu = \rho dx$ with some $\rho$ such that $\log\, \rho \in C^2(\R^n)$.  
The formality here means that we are going to use the integration by parts freely and do not also explicitly identify a class of test functions; we refer to \cite{GKKO} for the rigorous argument. 
Since it suffices to consider the case $k=1$ and $\kappa=1$, we simply assume that 
\begin{equation}\label{e:Assump6/12}
    \int_{\R^n} |\phi|^2\, d\mu = \int_{\R^n} |\nabla  \phi|^2\, d\mu
\end{equation}
for $\phi(x) = \langle u,x\rangle $ for some $u\in \mathbb{S}^{n-1}$. 
Let $\mathcal{L}_\mu$ be the generator of the Dirichlet form $\int_{\R^n} \langle \nabla \phi_1, \nabla \phi_2\rangle \, d\mu$. 
The Poincar\'{e} inequality  is a consequence from the Bochner inequality stating that 
\begin{equation}\label{e:Bochner}
    \frac12 \int_{\R^n} |\nabla \phi|^2 \mathcal{L}_\mu \psi\, d\mu 
    + 
    \int_{\R^n} (\mathcal{L}_\mu \phi)^2 \psi\, d\mu + \int_{\R^n} \langle \nabla \psi, \nabla \phi\rangle \mathcal{L}_\mu \phi\, d\mu 
    \ge \int_{\R^n} | \nabla \phi |^2 \psi \, d\mu
\end{equation}
for all test functions $\phi,\psi$.
The left-hand side is the well-known $\Gamma_2$ operator in a weak formulation. 
Due to equality \eqref{e:Assump6/12} in the Poincar\'{e} inequality, the Bochner inequality must be achieved its equality by the same $\phi(x) = \langle u,x\rangle$. 
Since the extremizer of the Poincar\'{e} inequality is the eigenfunction of $\mathcal{L}_\mu$, we also know that $\mathcal{L}_\mu \phi = -\phi$. Thus, equality version of \eqref{e:Bochner} may be simplified to 
\begin{align*}
    \int_{\R^n} | \langle u,x\rangle |^2\psi\, d\mu
    -
    \int_{\R^n} \langle u, \nabla \psi\rangle \langle u,x\rangle\, d\mu 
    = 
    \int_{\R^n} \psi\, d\mu
\end{align*}
for all test functions $\psi$, where we also used $\int_{\R^n} \mathcal{L}_\mu \psi \, d\mu =0$ for the first term. 
For the second term of the left-hand side, by $d\mu = \rho dx$, 
\begin{align*}
    -
    \int_{\R^n} \langle u, \nabla \psi\rangle \langle u,x\rangle\, d\mu 
    &= 
    -\int_{\R^n}
    \big\langle \langle u,x\rangle \rho(x) u, \nabla \psi \big\rangle\, dx\\
    &= 
    \int_{\R^n} {\rm div}\, \big( \langle u,x\rangle \rho(x) u \big) \psi\, dx \\
    &= 
    \int_{\R^n}  \big( \langle u, \nabla\rho(x)\rangle \langle u,x\rangle + \rho(x) \big) \psi\, dx.  
\end{align*}
Hence, we derive that 
\begin{align*}
    \int_{\R^n} | \langle u,x\rangle |^2 \rho(x) \psi\, dx
    + 
    \int_{\R^n} \langle u, \nabla \rho(x)\rangle \langle u,x\rangle \psi\, dx = 0
\end{align*}
and thus we have that $ \langle u, \nabla \rho(x)\rangle = - \langle u,x\rangle \rho(x)$ in an appropriate weak sense. 
We now make use of the regularity of $\rho$ to conclude that $\rho(x)$ is the standard Gaussian in the direction $u$.

\section*{Acknowledgments}
This work was supported by JSPS Kakenhi grant numbers 21K13806, 23K03156, and 23H01080 (Nakamura), and JSPS Kakenhi grant numbers 24KJ0030 (Tsuji). 
The authors would be grateful to Emanuel Milman for impressive discussions and comments. 
The authors also appreciate Shin-ichi Ohta for his comment on Theorem  \ref{t:GKKO}.

\if0 
\newpage

\section{Inverse BL (even case and centered case)}
Let $m, n_1, \dots, n_m, N \in \mathbb{N}$, $c_1, \dots, c_m>0$ , $B_i : \R^N \to \R^{n_i}$ be a linear map for $i=1, \dots, m$ and $\mathcal{Q} \in \R^{N \times N}$ be a symmetric matrix. 
Denote by ${\bf B} := (B_i)_{i=1}^m$ and ${\bf c} := (c_1, \dots, c_m)$. 
Let $Q_i, P_i \in \R^{n_i \times n_i}$ be positive symmetric matrices with $Q_i \le P_i$ and denote by ${\bf Q} = (Q_i)_{i=1}^m$ and ${\bf P} = (P_i)_{i=1}^m$.  

\begin{definition}[definition of a regularized IBL constant]
Let ${\rm I}_{{\bf Q}, {\bf P}}^{(e)} ({\bf B}, {\bf c}, \mathcal{Q}) \in [0, \infty)$ be the best constant such that 
$$
\int_{\R^N} e^{\langle x, \mathcal{Q} x\rangle} \prod_{i=1}^m f_i(B_ix)^{c_i} \, dx 
\ge 
{\rm I}_{{\bf Q}, {\bf P}}^{(e)} ({\bf B}, {\bf c}, \mathcal{Q})
\prod_{i=1}^m \left( \int_{\R^{n_i}} f_i\, dx_i \right)^{c_i}
$$ 
for all nonnegative, {\bf even and log-concave} $f_i \in L^1(\R^{n_i})$ which are more log-concave than $g_{Q_i}$ and more log-convex than $g_{P_i}$ for $i=1, \dots, m$. 

If we replace the evenness assumption by the \textbf{centering assumption} $\int_{\mathbb{R}^{n_i}} x_i f_i\, dx_i=0$ then we denote the corresponding best constant by ${\rm I}_{{\bf Q}, {\bf P}}^{(o)} ({\bf B}, {\bf c}, \mathcal{Q})$.
\end{definition}

\begin{definition}[definition of a regularized Gaussian IBL constant]
Under the above notations, 
let ${\rm I}_{{\bf Q}, {\bf P}}^{\mathcal{G}} ({\bf B}, {\bf c}, \mathcal{Q}) \in [0, \infty)$ be the best constant such that 
$$
\int_{\R^N} e^{\langle x, \mathcal{Q} x\rangle} \prod_{i=1}^m g_{A_i}(B_ix)^{c_i} \, dx 
\ge 
{\rm I}_{{\bf Q}, {\bf P}}^\mathcal{G} ({\bf B}, {\bf c}, \mathcal{Q})
\prod_{i=1}^m \left( \int_{\R^{n_i}} g_{A_i}\, dx_i \right)^{c_i}
$$ 
for all $n_i \times n_i$ positive symmetric matrices $A_i$ with $Q_i \le A_i \le P_i$ for $i=1, \dots, m$,. 
\end{definition}

\begin{definition}[definition of the case either/both $Q_i=0$ or/and $P_i = \infty$]
We formally denote ${\rm I}_{\mathbf{Q},\infty}^{(e\; {\rm or}\; o \; {\rm or}\; \mathcal{G})}(\mathbf{B},\mathbf{c},\mathcal{Q})$ to be the best constant of the IBL inequality where $f_i$ is more log-concave than $g_{Q_i}$ but no constraint on its log-convexity. 
Similarly, ${\rm I}_{0,\mathbf{P}}^{(e\; {\rm or}\; o \; {\rm or}\; \mathcal{G})}(\mathbf{B},\mathbf{c},\mathcal{Q})$ denotes the best constant of the IBL inequality where $f_i$ is log-concave and more log-convex than $g_{P_i}$. 
\end{definition}

We may mimic the proof of the argument in Nakamura--Tsuji to derive the following\footnote{We have indeed given a small comment on it in the end of P.11 in Nakamura--Tsuji.}: 
\begin{theorem}[Even and centered but $0<Q_i\le P_i <\infty$]
    Let $(\mathbf{B},\mathbf{c},\mathcal{Q})$ be arbitrary Brascamp--Lieb datum, and $0< Q_i \le P_i<\infty$. 
    Then 
    $$
    {\rm I}_{{\bf Q}, {\bf P}}^{(e)} ({\bf B}, {\bf c}, \mathcal{Q})
    =
    {\rm I}_{{\bf Q}, {\bf P}}^{(o)} ({\bf B}, {\bf c}, \mathcal{Q})
    =
    {\rm I}_{{\bf Q}, {\bf P}}^\mathcal{G} ({\bf B}, {\bf c}, \mathcal{Q}). 
    $$
\end{theorem}

Formally, we may take limits either/both $Q_i\to0$ or/and $P_i\to \infty$ to derive the desired consequence, but making the argument rigorous requires some works especially in the centered case. 
For the even case, we may justify the limiting argument: 
\begin{theorem}[Even case]\label{t:MainIBLEven}
Let $(\mathbf{B},\mathbf{c},\mathcal{Q})$ be arbitrary Brascamp--Lieb datum, and $Q_i \le P_i$. 
\textbf{Here we allow the situation $Q_i=0 $ and/or $P_i=\infty$.} 
Then 
$$
{\rm I}_{{\bf Q}, {\bf P}}^{(e)} ({\bf B}, {\bf c}, \mathcal{Q})
=
{\rm I}_{{\bf Q}, {\bf P}}^\mathcal{G} ({\bf B}, {\bf c}, \mathcal{Q}). 
$$
\end{theorem}


\section{Gaussian correlation inequality}

\subsection{Deriving the usual GCI: even}
\begin{corollary}
For any symmetric convex sets $K, L \subset \R^n$, it holds that 
$$
\gamma(K \cap L) \ge \gamma(K) \gamma(L),  
$$
where $\gamma$ is the standard Gaussian. 
\end{corollary}

\begin{proof}
Let $m=2$, $n_1=n_2 = N= n$, $c_1=c_2=1$, $\mathcal{Q}= \frac12{id}_n$, $B_1 = B_2 = {\rm id}_n$. 
Put $f_1 = \mathbf{1}_K e^{-\frac12 |\cdot|^2}$ and $f_2=\mathbf{1}_L e^{-\frac12 |\cdot|^2}$. 
Put $P_i=\infty$ and $Q_i={\rm id}_n$ for $i=1,2$, then $f_i$ for each $i=1,2$ is more log-concave than $g_{Q_i}$ and more log-convex than $g_{P_i}$. 
Thus Theorem \ref{t:MainIBLEven} states that 
$$
\gamma(K \cap L) \ge (2\pi)^{\frac n2}{\rm I}_{{\bf Q}, {\bf P}}^\mathcal{G} ({\bf B}, {\bf c}, \mathcal{Q}) \gamma(K) \gamma(L) 
$$
and ${\rm I}_{{\bf Q}, {\bf P}}^\mathcal{G} ({\bf B}, {\bf c}, \mathcal{Q})$ is explicitly given by 
$$
(2\pi)^{\frac n2}{\rm I}_{{\bf Q}, {\bf P}}^\mathcal{G} ({\bf B}, {\bf c}, \mathcal{Q})
=
\left( \inf_{A_1, A_2} \frac{{\rm det}(A_1)\,  {\rm det}(A_2)}{{\rm det}(A_1 + A_2 - {\rm id}_n)} \right)^\frac12,
$$
where infimum is taken over all positive symmetric matrices $A_1, A_2 \in \R^{n \times n}$ with $A_1, A_2 \ge {\rm id}_n$. 
Lemma \ref{l:GaussConst} (see bloew) yields that this is 1 and thus we conclude the desired assertion. 
\end{proof}

\begin{lemma}\label{l:GaussConst}
Let $A_1, A_2$ be $n \times n$ positive symmetric matrices with $A_1, A_2 \ge {\rm id}_n$. 
Then 
$$
\frac{{\rm det}(A_1)\,  {\rm det}(A_2)}{{\rm det}(A_1 + A_2 - {\rm id}_n)} \ge 1. 
$$
Equality holds if either $A_1$ or $A_2 ={\rm id}_n$. 
\end{lemma}

\begin{proof}
Without loss of generality, we may suppose that $A_1 = {\rm diag}(a_1, \dots, a_n)$ with $a_1, \dots, a_n\ge 1$. 
Put 
$$
\Phi(A_1, A_2) := \log \frac{{\rm det}(A_1)\, {\rm det}(A_2)}{{\rm det}(A_1 + A_2 - {\rm id}_n)}
=
\log {\rm det}(A_1) + \log {\rm det}(A_2) - \log {\rm det} (A_1 + A_2 - {\rm id}_n). 
$$
Let $\widetilde{A_1} := {\rm diag}(1, a_2, \dots, a_n)$ and show $\Phi(\widetilde{A_1}, A_2) \le \Phi(A_1, A_2)$. 
In fact, if it would hold true, then iterating this procedure for each diagonal parts yields $\Phi(A_1, A_2) \ge \Phi({\rm id}_n, A_2)$. 
Furthermore by changing a role of $A_1$ and $A_2$, we have $\Phi(A_1, A_2) \ge \Phi({\rm id}_n, {\rm id}_n) = 0$, which is the desired assertion.  

Put $X := e_1 \otimes e_1$, where $e_1 := (1, 0, \dots, 0) \in \R^n$ and $A_1(t) : = A_1 + t X$ for $t \in [-(a_1-1), 0]$. 
Then we see that $A_1(t) \ge {\rm id}_n$ for all $t \in [-(a_1-1), 0]$, $A_1(0) = A_1$ and $A_1(-(a_1-1)) = \widetilde{A_1}$. 
Now we may check that 
$$
\frac{\Phi(A_1(t), A_2)}{dt} 
=
{\rm Tr} [( A_1(t)^{-1} - (A_1(t) + A_2 - {\rm id}_n) ^{-1})X]. 
$$
Since $A_2 \ge {\rm id}_n$, we have $A_1(t)^{-1} - (A_1(t) + A_2 - {\rm id}_n)^{-1}\ge 0$. 
Thus since $X \ge 0$, we know $\frac{\Phi(A_1(t), A_2)}{dt} \ge 0$, which means $$
\Phi(\widetilde{A_1}, A_2) = \Phi( A_1(-(a_1-1)), A_2) \le \Phi(A_1(0), A_2) = \Phi( A_1, A_2). 
$$
\end{proof}

\section{Regularized IBL}

\subsection{Notation}

\begin{itemize}
\item $L_+^1(\R^n) := \{ f \in L^1(\R^n) \, ;\, f\ge 0, \int_{\R^n} f\, dx >0\}$.

\item For ${\bf f} = (f_1, \dots, f_m) \in L_+^1(\R^{n_1}) \times \cdots \times L_+^1(\R^{n_m})$, 
$$
{\rm BL}({\bf f})
=
{\rm BL}( {\bf B}, {\bf c}, \mathcal{Q} ; {\bf f}) 
\coloneqq
\frac{ \int_{\R^N} e^{\langle x, \mathcal{Q} x\rangle} \prod_{i=1}^m f_i(B_ix)^{c_i}\, dx}{ \prod_{i=1}^m \left( \int_{\R^{n_i}} f_i\, dx_i \right)^{c_i} }. 
$$

\item Let $Q, P \in \R^{n \times n}$ be positive symmetric matrices. $f \in \mathcal{F}_{Q, P}^{(o)}(\R^n)$ if $f \in L_+^1(\R^n)$, $\int_{\R^n} x f\, dx =0$ and $f$ is more log-concave than $g_{Q}$ and more log-convex than $g_P$.
Moreover $f \in \mathcal{F}_{Q, \infty}^{(o)}(\R^n)$ if $f \in L_+^1(\R^n)$, $\int_{\R^n} x f\, dx =0$ and $f$ is more log-concave than $g_{Q}$. 

\item $f \in \mathcal{F}_{Q, P, +}^{(o)}(\R^n)$ if $f \in L_+^1(\R^n)$, $f>0$, $\int_{\R^n} x f\, dx =0$ and $f$ is more log-concave than $g_{Q}$ and more log-convex than $g_P$.
Moreover $f \in \mathcal{F}_{Q, \infty, +}^{(o)}(\R^n)$ if $f \in L_+^1(\R^n)$, $f>0$, $\int_{\R^n} x f\, dx =0$ and $f$ is more log-concave than $g_{Q}$. 

\item $g_A \in \mathcal{G}_{Q, P}(\R^n)$ if $Q \le A \le P$. 
\end{itemize}

\subsection{Extremisability}

\begin{theorem}\label{t:Extremiser}
Let $Q_i, P_i \in \R^{n \times n}$ be symmetric matrices $0<Q_i \le P_i $ for $i=1, \dots, m$. 
Then ${\rm I}_{{\bf Q}, {\bf P}, +}^{(o)}({\bf B}, {\bf c}, \mathcal{Q})$ is extremisable. 
\end{theorem}

\begin{proof}
Let ${\bf f}^{(k)} = (f_1^{(k)}, f_2^{(k)}, \dots, f_m^{(k)}) \in \mathcal{F}_{Q_1, P_1, +}^{(o)}(\R^{n_1}) \times \cdots \times\mathcal{F}_{Q_m, P_m, +}^{(o)}(\R^{n_m}) $ for $k \in \mathbb{N}$ be a minimizing sequence, namely 
\begin{equation}\label{e:Convergence}
\lim_{k \to \infty} {\rm BL}({\bf f}^{(k)}) = {\rm I}_{{\bf Q}, {\bf P}, +}^{(o)}({\bf B}, {\bf c}, \mathcal{Q}). 
\end{equation}
Without loss of generality, we may suppose that $\int_{\R^{n_i}} f_i^{(k)}\, dx_i=1$ for all $i$ and $k$. 

First note that Lemma \ref{l:Convexity} yields that 
$(f_i^{(k)})_{k \in \mathbb{N}}$ is uniformly bounded and uniformly equicontinuous on $[-1, 1]^n$. 
Thus the Arzel\`{a}--Ascoli theorem implies that there exists some subsequence $(f_i^{(1_k)})_{k \in \mathbb{N}}$ which uniformly converges to some positive function $F_i^{(1)}$ defined on $[-1, 1]^{n_i}$. 
Also since $(f_i^{(1_k)})_{k \in \mathbb{N}}$ is uniformly bounded and uniformly equicontinuous on $[-2, 2]^{n_i}$, the Arzel\`{a}--Ascoli theorem implies that there exists some subsubsequence $(f_i^{(2_k)})_{k \in \mathbb{N}}$ which uniformly converges to some positive function $F_i^{(2)}$ defined on $[-2, 2]^{n_i}$. 
Clearly we see that $F_i^{(1)}=F_i^{(2)}$ on $[-1, 1]^{n_i}$. 
Iterating this procedure, we obtain a subsequence $(f_i^{(\ell_k)})_{k \in \mathbb{N}}$ and a positive function $F_i^{(\ell)}$ defined on $[-\ell, \ell]^{n_i}$ for each $\ell \in \mathbb{N}$ satisfying the followings: 
\begin{itemize}
\item[(i)] $(f_i^{((\ell+1)_k)})_{k \in \mathbb{N}}$ is a subsequence of $(f_i^{(\ell_k)})_{k \in \mathbb{N}}$. 

\item[(ii)] $(f_i^{(\ell_k)})_{k \in \mathbb{N}}$ uniformly converges to $F_i^{(\ell)}$ on $[-\ell, \ell]^{n_i}$ as $k \to \infty$.  

\item[(iii)] $F_i^{(\ell +1)} = F_i^{(\ell)}$ on $[-\ell, \ell]^{n_i}$. 
\end{itemize}
Now we define a positive function $F_i$ on $\R^{n_i}$ by 
$F_i(x) \coloneqq F_i^{(\ell)}(x)$ if $x \in [-\ell, \ell]^{n_i}$ for some $\ell \in \mathbb{N}$, which is well-defined. 
Moreover take a subsequence $(f_i^{(k_k)})_{k \in \mathbb{N}}$ of $(f_i^{k})_{k \in \mathbb{N}}$. 
Then by construction, we may check that 
$$
\lim_{k \to \infty} f_i^{(k_k)}(x) = F_i(x), \;\;\; x \in \R^{n_i}. 
$$
Since $f_i^{(k_k)}$ is more log-concave than $g_{Q_i}$, so is $F_i$. 
Similarly $F_i$ is also more log-convex than $g_{P_i}$. 
Moreover by Lemma \ref{l:Convexity}, Lebesgue's convergence theorem yields that 
$$
\int_{\R^{n_1}} x F_i(x) \, dx = \lim_{k \to \infty} \int_{\R^{n_1}} x f_i^{(k_k)}\, dx = 0, \;\;\; \forall i=1, \dots, m
$$
and 
\begin{equation}\label{e:MassPreservation}
\int_{\R^{n_i}} F_i\, dx_i = \lim_{k \to \infty} \int_{\R^n} f_i^{(k_k)}\, dx_i = 1, \;\;\; i=1, \dots, m. 
\end{equation}
Thus $F_i \in \mathcal{F}_{Q_i, P_i, +}^{(o)}(\R^{n_i})$ for $i=1, \dots, m$ hold true. 
Finally, \eqref{e:Convergence}, Fatou's lemma and \eqref{e:MassPreservation} yield that 
\begin{align*}
{\rm I}_{{\bf Q}, {\bf P}, +}^{(o)}({\bf B}, {\bf c}, \mathcal{Q})
=&
\lim_{k \to \infty} \int_{\R^N} e^{\langle x, \mathcal{Q} x\rangle} \prod_{i=1}^m f_i^{(k_k)}(B_ix)^{c_i}\, dx
\\
\ge&
\int_{\R^N} e^{\langle x, \mathcal{Q} x\rangle} \prod_{i=1}^m F_i(B_ix)^{c_i}\, dx
\ge
{\rm I}_{{\bf Q}, {\bf P}, +}^{(o)}({\bf B}, {\bf c}, \mathcal{Q}). 
\end{align*}
Thus 
$$
{\rm I}_{{\bf Q}, {\bf P}, +}^{(o)}({\bf B}, {\bf c}, \mathcal{Q})
=
\int_{\R^N} e^{\langle x, \mathcal{Q} x\rangle} \prod_{i=1}^m F_i(B_ix)^{c_i}\, dx, 
$$
which means that ${\rm I}_{{\bf Q}, {\bf P}, +}^{(o)}({\bf B}, {\bf c}, \mathcal{Q})$ is extremisable. 
\end{proof}

\subsection{Monotonicity of the IBL constant along convolutions}

\subsection{Gaussian saturation}

\subsubsection{Regularised Gaussian saturation}

{\color{red}
Up to the above argument, i.e. in the framework of $Q_i$-uniformly log-concave and $P_i$-semi log-convex, we may take $c_i<0$ as well. 
}

\begin{proposition}\label{Prop:LargeQ_i}
    Let $(\mathbf{B},\mathbf{c},\mathcal{Q})$ be arbitrary Brascamp--Lieb datum. 
    Suppose that $Q_i>0$, $i=1,\ldots,m$, satisfy 
    \begin{equation}\label{e:LargeQ_i}
        \sum_{i=1}^m c_i B_i^*Q_iB_i > 2\mathcal{Q}. 
    \end{equation}
    Then 
    $$
    {\rm I}^{(o)}_{\mathbf{Q},\infty}(\mathbf{B},\mathbf{c}, \mathcal{Q}) 
    =
    {\rm I}^{(\mathcal{G})}_{\mathbf{Q},\infty}(\mathbf{B},\mathbf{c}, \mathcal{Q}). 
    $$
\end{proposition}

\begin{lemma}\label{l:DecayUniLogCon}
    Let $G>0$ on $\mathbb{R}^n$ and $f:\mathbb{R}^n\to [0,\infty)$ be such that $0<\int_{\mathbb{R}^n} f\, dx <\infty$, $\int_{\mathbb{R}^n} xf\, dx =0$, and $G$-uniform log-concave. 
    Then 
    $$
    f(x) 
    \le C_{f,n}(t) e^{-\frac12 (1-t)\langle x,Gx\rangle},\quad \forall t\in (0,1),\quad 
    C_{f,n}(t):= e^{\frac{n}t}f(0). 
    $$
\end{lemma}

\begin{proof}
    The proof is almost the same as in the one of Lemma \ref{l:Convexity}. 
    Let us denote $f = e^{-\varphi}$. Then the assumption tells us that $\psi(x):= \varphi(x) - \frac12\langle x, Gx\rangle$ is convex. 
    Therefore, for any $t\in (0,1)$, $\psi(tx) \le t\psi(x) + (1-t) \psi(0)$. 
    This means that 
    \begin{align*}
    \varphi(tx) \le (1-t) \varphi(0) + t\varphi(x) - \frac12t(1-t)\langle x,Gx\rangle.
    \end{align*}
    By Lemma \ref{l:Fradelizi}, we know that $\varphi(0) - n \le \min_y \varphi(y) \le \varphi(tx)$, and thus 
    $$
    \varphi(0) - n
    \le 
    (1-t) \varphi(0) + t\varphi(x) - \frac12t(1-t)\langle x,Gx\rangle.
    $$
    It suffices to rearrange terms to conclude the desired inequality. 
\end{proof}

In the proof of Proposition \ref{Prop:LargeQ_i}, we use the assumption \eqref{e:LargeQ_i} in order to justify \eqref{e:ChangeLimInt} only. 
It is thus reasonable to expect that the conclusion of Proposition \ref{Prop:LargeQ_i} would remain true even when \eqref{e:LargeQ_i} is not valid. 
For instance, when $f_i$ has a priori rapid decay property, we may confirm this expectation.

\begin{theorem}\label{t:IBL-CenterUniLogConcave}
    Let $(\mathbf{B},\mathbf{c},\mathcal{Q})$ be arbitrary Brascamp--Lieb datum and $Q_i\ge 0$ (allowing $Q_i = 0$). 
    Then for any integrable $f_i:\mathbb{R}^{n_i}\to [0,\infty)$ such that (i) $Q_i$-uniformly log-concave (ii) centered $\int_{\mathbb{R}^{n_i}}x_i f_i\, dx_i =0$, we have that 
    $$
    \int_{\mathbb{R}^N} e^{\langle x,\mathcal{Q}x\rangle} 
    \prod_{i=1}^m f_i(B_ix)^{c_i}\, dx 
    \ge 
    {\rm I}^{(\mathcal{G})}_{\mathbf{Q},\infty}(\mathbf{B},\mathbf{c}, \mathcal{Q}) 
    \prod_{i=1}^m \big( \int_{\mathbb{R}^{n_i}} f_i\, dx_i \big)^{c_i}. 
    $$
\end{theorem}

\begin{proof}

\end{proof}

\if0 
\begin{remark}
It might be worth to pointing out that the above argument is applicable even when $Q_i =0$ if one could establish Proposition \ref{p:IBL-CompactSupp} with $Q_i=0$. 
More precisely, if one could prove 
$$
\int_{\R^n} e^{\langle x, \mathcal{Q}x\rangle} \prod_{i=1}^m f_{i}(B_ix)^{c_i}\, dx
\ge C_* 
\prod_{i=1}^m \big( \int_{\mathbb{R}^{n_i}} f_{i}((1+\varepsilon)x_i)\, dx_i \big)^{c_i}
$$
for some $C_*$ and for all integrable, centered log-concave $f_i$ with compact support, then one can upgrade it without the assumption of the compact support. 
\end{remark}
\fi

\if0 
It would be great if we could get rid of the compact support assumption of $f_i$ above. 
If $f_i$ is even then we may simply approximate it by $f_i^{(k)}:=f_i \mathbf{1}_{[-k,k]^{n_i}}$. In this approximation, we have a monotonicity $f_i^{(k)}\le f_i^{(k+1)}$ and so may employ the monotone convergence theorem to ensure 
$$
\lim_{k\to\infty} 
\int_{\mathbb{R}^N} e^{\langle x,\mathcal{Q}x\rangle} \prod_{i=1}^m f_i^{(k)}(B_ix)^{c_i}\, dx 
= 
\int_{\mathbb{R}^N} e^{\langle x,\mathcal{Q}x\rangle} \prod_{i=1}^m f_i(B_ix)^{c_i}\, dx,  
$$
which concludes the proof of Theorem \ref{t:MainIBL}. 
For the centered $f_i$, it is however nontrivial how to approximate it. 
This point leads us to the following problem
\begin{itembox}[1]{Problem: How one can approximate a centered function?}
For a given centered and log-concave $f_i:\mathbb{R}^{n_i}\to [0,\infty)$, are there any approximating sequence $(f_i^{(k)})_k$ satisfying the following properties? 
\begin{itemize}
    \item 
    $f_i^{(k)}$ is compactly supported and centered for each $k$, 
    \item 
    $$
\lim_{k\to\infty} 
\int_{\mathbb{R}^N} e^{\langle x,\mathcal{Q}x\rangle} \prod_{i=1}^m f_i^{(k)}(B_ix)^{c_i}\, dx 
= 
\int_{\mathbb{R}^N} e^{\langle x,\mathcal{Q}x\rangle} \prod_{i=1}^m f_i(B_ix)^{c_i}\, dx.  
$$
\end{itemize}
    
\end{itembox}
\fi


\section{Appendix}

\if0 
\begin{itemize}
\item 
$$
{\rm I}_{0, \infty}^{(c)}({\bf B}, {\bf c}, \mathcal{Q})
:=
\inf_{ \text{ $f_i \in \mathcal{F}_{0, \infty}^{(o)}(\R^{n_i})$ with compact support}} {\rm BL} ({\bf B}, {\bf c}, \mathcal{Q} ; {\bf f}). 
$$
\end{itemize}

\begin{proposition}
$$
{\rm I}_{0,\infty}^{(o)}({\bf B}, {\bf c}, \mathcal{Q}) = {\rm I}_{0, \infty}^{(c)}({\bf B}, {\bf c}, \mathcal{Q})
$$
\end{proposition}

\begin{proof}
Take $f_i \in \mathcal{F}_{0, \infty}^{(o)}(\R^{n_i})$ for $i=1,\dots, m$. 
Without loss of generality, we may suppose that $\| f_i \|_\infty \le 1$ for all $i=1, \dots, m$ and ${\rm BL}({\bf B}, {\bf c}, \mathcal{Q} ; {\bf f}) < +\infty$. 
Then in virtue of Proposition \ref{l:CenterApprox}, for any $\varepsilon \in (0,1)$, we know that 
$$
\int_{\R^n} e^{\langle x, \mathcal{Q} x \rangle} \prod_{i=1}^m f_i((1+\varepsilon)B_i x)^{c_i} \,dx
\ge
I_{0, \infty}^{(o)}({\bf B}, {\bf c}, \mathcal{Q}) \left( \int_{\R^{n_i}} f_i((1+\varepsilon) \cdot) \, dx_i \right)^{c_i}. 
$$
Now $f_i$ is log-concave, it holds that 
$$
f_i((1+\varepsilon)x)^{\frac1{1+\varepsilon}} f(0)^{\frac{\varepsilon}{1+\varepsilon}} \le f(x), \;\;\; \forall x \in \R^{n_i}. 
$$
Since $f_i$ is centered, we have $f_i(0)>0$ by Lemma \ref{l:Fradelizi}, and thus we have 
$$
f_i((1+\varepsilon)x ) \le c_{f_i} f_i(x)^{1+\varepsilon} \le c_{f_i} f_i(x), \;\;\; \forall x \in \R^{n_i}
$$
for some constant depending only on $f_i$, where the second inequality follows from $\| f_i \|_\infty \le 1$. 
Thus by Lebesgue's convergence theorem, 
\end{proof}
\fi

\fi


\begin{thebibliography}{99}
\bibitem{ACS} R. Assouline, A. Chor, and S. Sadovsky, {\it A refinement of the \v{S}id\'{a}k-Khatri inequality and a strong Gaussian correlation conjecture}, arXiv:2407.15684, 2024.

\bibitem{BGL} D. Bakry, I. Gentil, M. Ledoux, \textit{Analysis and geometry of Markov diffusion operators}. Grundlehren der mathematischen Wissenschaften 348, Springer (2014). 


\bibitem{BallPhd} K. Ball,  \textit{Isometric problems in $\ell_p$ and sections of convex sets},  Doctoral thesis, University of Cambridge, 1986.
\bibitem{Ball91} K. Ball, \textit{Volumes of sections of cubes and related problems}, Geometric Aspects of Functional Analysis, ed. by J. Lindenstrauss and V. D. Milman, Lecture Notes in Math. 1376, Springer, Heidelberg, 1989, 251--260.
\bibitem{BallJLMS} K. Ball, \textit{Volume ratio and a reverse isoperimetric inequality}, J. Lond. Math. Soc.   \textbf{44} (1991), 351--359. 
\bibitem{BW} F. Barthe, P. Wolff, \textit{Positive Gaussian kernels also have Gaussian minimizers}, Mem. Am. Math. Soc., {\bf 276} (2022), no. 1359, 90pp.
\bibitem{Beckner} W. Beckner, \textit{Inequalities in Fourier analysis}, Ann. of Math. \textbf{102} (1975), 159--182.
\bibitem{BBBCF} J. Bennett, N. Bez, S. Buschenhenke, M. G. Cowling, T. C. Flock, \textit{On the nonlinear  Brascamp--Lieb inequality},  Duke Math. J.  \textbf{169} (2020), 3291--3338. 
\bibitem{BCCT} J. Bennett, A. Carbery, M. Christ, T. Tao,  \textit{The Brascamp--Lieb inequalities: finiteness, structure and extremals}, Geom. Funct. Anal. \textbf{17} (2008), 1343--1415.

%
\bibitem{BN} N. Bez, S. Nakamura, \textit{Regularised Brascamp--Lieb inequalities}, Anal. PDE. \textbf{18} (2025), 1567--1613. 
\bibitem{Borell} C. Borell, \textit{A Gaussian correlation inequality for certain bodies in $\R^n$}, Math. Ann. \textbf{256} (1981), 569--573. 
        \bibitem{BraLi_Adv} H. J. Brascamp, E. H. Lieb, \textit{Best constants in Young's inequality, its converse, and its generalization to more than three functions}, Adv. Math. \textbf{20}  (1976), 151--173.
        \bibitem{BLJFA}  H. J. Brascamp, E. H. Lieb, {On extensions of the Brunn-Minkowski and Pr\'{e}kopa-Leindler theorems, including inequalities for log concave functions, and with an application to the diffusion equation}, J. Funct. Anal. {\bf 22} (1976), 366--389.
        \bibitem{CDP} W. K. Chen, N. Dafnis, G. Paouris, \textit{Improved H\"{o}lder and reverse H\"{o}lder inequalities for Gaussian random vectors}, Adv. Math. \textbf{280} (2015), 643--689.
        \bibitem{CZ} X. Cheng, D. Zhou, \textit{Eigenvalues of the drifted Laplacian on complete metric measure spaces}, Commun. Contemp. Math. \textbf{19} (2017), 1650001, 17 pp. 
        \bibitem{CP} S. Chewi, A.-A. Pooladian. \textit{An entropic generalization of Caffarelli's contraction theorem via covariance inequalities}, C. R. Math. Acad. Sci. Paris, \textbf{361} (2023), 1471--1482. 
        \bibitem{C-E} D. Cordero-Erausquin, {\it Some applications of mass transport to Gaussian-type inequalities}, Arch. Ration. Mech. Anal. {\bf 161} (2002), 257--269. 
        \bibitem{DEOPSS} S. Das Gupta, M. L. Eaton, I. Olkin, M. Perlman, L. J. Savage, M. Sobel, \textit{Inequalities on the probability content of convex regions for elliptically contoured distributions}, Proceedings of the Sixth Berkeley Symposium on Mathematical Statistics and Probability (Univ. California, Berkeley, Calif., 1970/1971), vol. 2, 1972, pp. 241--265.
        \bibitem{ENT}A. Eskenazis, P. Nayar, and T. Tkocz, {\it Gaussian mixtures: entropy and geometric inequalities}, Ann. Probab. {\bf 46} (2018), 2908--2945. 
	\bibitem{FraArchMath} M. Fradelizi, \textit{Sections of convex bodies through their centroid}, Arch. Math. {\bf 69} (1997), 515--522.  

        \bibitem{GKKO} N. Gigli, C. Ketterer, K. Kuwada, S. Ohta, \textit{Rigidity for the spectral gap on RCD$(K,\infty)$-spaces}, Amer. J. Math. {\bf 142} (2020), 1559--1594.
        
        \bibitem{Harge} G. Harg\'{e}, {\it A particular case of correlation inequality for the Gaussian measure}, Ann. Probab. {\bf 27} (1999), 1939--1951. 
        \bibitem{Harge2} G. Harg\'{e}, \textit{A convex/log-concave correlation inequality for Gaussian measure and an application to abstract Wiener spaces}, Probab. Theory Relat. Fields.  \textbf{130} (2004), 415--440. 
        \bibitem{Hu} Y. Hu, \textit{It\^{o}-Wiener chaos expansion with exact residual and correlation, variance inequalities}, J. Theoret. Probab. \textbf{10} (1997), 835--848.
        \bibitem{Kha} C. G. Khatri, {\it On certain inequalities for normal distributions and their applications to simultaneous confidence bounds}, Ann. Math. Statist. {\bf 38} (1967), 1853--1867. 
        \bibitem{LM}  R. Lata\l a and D. Matlak, {\it Royen's proof of the Gaussian correlation inequality}, In Geometric aspects of functional analysis, volume 2169 of Lecture Notes in Math., pages 265--275. Springer, Cham, 2017.
        \bibitem{Lieb} E. H. Lieb, \textit{Gaussian kernels have only Gaussian maximizers},  Invent.  Math. \textbf{102} (1990), 179--208. 
    \bibitem{Mil} E. Milman, {\it Gaussian Correlation via Inverse Brascamp-Lieb}, arXiv:2501.11018, to appear in Probab. Theory Relat. Fields. 
	\bibitem{NT3} S. Nakamura, H. Tsuji, \textit{A generalized Legendre duality relation and Gaussian saturation}, 
    Invent. math. (2025). https://doi.org/10.1007/s00222-025-01382-5. 
    \bibitem{Pitt} L. D. Pitt, {\it A Gaussian correlation inequality for symmetric convex sets}, Ann. Probab. {\bf 5} (1977), 470--474. 
    \bibitem{Roy14}T. Royen, {\it A simple proof of the Gaussian correlation conjecture extended to some multivariate gamma distributions}, Far East J. Theor. Stat. {\bf 48} (2014), 139--145. 
    \bibitem{SSZ} G. Schechtman, Th. Schlumprecht, and J. Zinn, {\it On the Gaussian measure of the intersection}, Ann. Probab. {\bf 26} (1998), 346--357. 
    \bibitem{Sid}  Z. \v{S}id\'{a}k, {\it Rectangular confidence regions for the means of multivariate normal distributions}, J. Amer. Statist. Assoc. {\bf 62} (1967), 626--633. 
    \bibitem{SW} S. Szarek, E. Werner, {\it A non-symmetric correlation inequality for Gaussian measure}, J. Multivariate Anal. {\bf 68} (1999), 193--211.
    \bibitem{Tehr}M. R. Tehranchi, {\it Inequalities for the Gaussian measure of convex sets}, Electron. Commun. Probab., {\bf 22} (2017), 1--7.  
    \bibitem{Val} S. I. Valdimarsson, \textit{On the Hessian of the optimal transport potential}, Ann. Sc. Norm. Super. Pisa Cl. Sci. \textbf{6} (2007), 441--456. 
    \bibitem{Vill} C. Villani. Optimal Transport, Old and New. Springer, Berlin, 2009.
\end{thebibliography}
\end{document}